\documentclass[11pt]{amsart}

\setlength{\textwidth}{15.2cm} \setlength{\hoffset}{-1.5cm}
\setlength{\textheight}{24cm} \setlength{\voffset}{-2cm}

\usepackage[all]{xy}
\usepackage{color, pstricks}
\usepackage{mathrsfs}
\usepackage{amsmath,amsthm, amssymb, amsgen,amsxtra,amsfonts, amsbsy} 
\usepackage[all]{xy}
\usepackage{verbatim}

\usepackage{dsfont,times}

\begin{document}
%
%
%
\theoremstyle{definition}
\newtheorem{Definition}{Definition}[section]
\newtheorem*{Definitionx}{Definition}
\newtheorem{Convention}{Definition}[section]
\newtheorem{Construction}{Construction}[section]
\newtheorem{Example}[Definition]{Example}
\newtheorem{Examples}[Definition]{Examples}
\newtheorem{Remark}[Definition]{Remark}
\newtheorem*{Remarkx}{Remark}
\newtheorem{Remarks}[Definition]{Remarks}
\newtheorem{Caution}[Definition]{Caution}
\newtheorem{Conjecture}[Definition]{Conjecture}
\newtheorem*{Conjecturex}{Conjecture}
\newtheorem{Question}{Question}
\newtheorem{Questions}[Definition]{Questions}
\newtheorem*{Acknowledgements}{Acknowledgements}
\newtheorem*{Organization}{Organization}
\newtheorem*{Disclaimer}{Disclaimer}
\theoremstyle{plain}
\newtheorem{Theorem}[Definition]{Theorem}
\newtheorem*{Theoremx}{Theorem}
\newtheorem{Proposition}[Definition]{Proposition}
\newtheorem*{Propositionx}{Proposition}
\newtheorem{Lemma}[Definition]{Lemma}
\newtheorem{Corollary}[Definition]{Corollary}
\newtheorem*{Corollaryx}{Corollary}
\newtheorem{Fact}[Definition]{Fact}
\newtheorem{Facts}[Definition]{Facts}
\newtheoremstyle{voiditstyle}{3pt}{3pt}{\itshape}{\parindent}%
{\bfseries}{.}{ }{\thmnote{#3}}%
\theoremstyle{voiditstyle}
\newtheorem*{VoidItalic}{}
\newtheoremstyle{voidromstyle}{3pt}{3pt}{\rm}{\parindent}%
{\bfseries}{.}{ }{\thmnote{#3}}%
\theoremstyle{voidromstyle}
\newtheorem*{VoidRoman}{}

%
\newcommand{\prf}{\par\noindent{\sc Proof.}\quad}
\newcommand{\blowup}{\rule[-3mm]{0mm}{0mm}}
\newcommand{\cal}{\mathcal}
\newcommand{\Aff}{{\mathds{A}}}
\newcommand{\BB}{{\mathds{B}}}
\newcommand{\CC}{{\mathds{C}}}
\newcommand{\EE}{{\mathds{E}}}
\newcommand{\FF}{{\mathds{F}}}
\newcommand{\GG}{{\mathds{G}}}
\newcommand{\HH}{{\mathds{H}}}
\newcommand{\NN}{{\mathds{N}}}
\newcommand{\ZZ}{{\mathds{Z}}}
\newcommand{\PP}{{\mathds{P}}}
\newcommand{\QQ}{{\mathds{Q}}}
\newcommand{\RR}{{\mathds{R}}}
\newcommand{\Liea}{{\mathfrak a}}
\newcommand{\Lieb}{{\mathfrak b}}
\newcommand{\Lieg}{{\mathfrak g}}
\newcommand{\Liem}{{\mathfrak m}}
\newcommand{\ideala}{{\mathfrak a}}
\newcommand{\idealb}{{\mathfrak b}}
\newcommand{\idealg}{{\mathfrak g}}
\newcommand{\idealm}{{\mathfrak m}}
\newcommand{\idealp}{{\mathfrak p}}
\newcommand{\idealq}{{\mathfrak q}}
\newcommand{\idealI}{{\cal I}}
\newcommand{\lin}{\sim}
\newcommand{\num}{\equiv}
\newcommand{\dual}{\ast}
\newcommand{\iso}{\cong}
\newcommand{\homeo}{\approx}
\newcommand{\mm}{{\mathfrak m}}
\newcommand{\pp}{{\mathfrak p}}
\newcommand{\qq}{{\mathfrak q}}
\newcommand{\rr}{{\mathfrak r}}
\newcommand{\pP}{{\mathfrak P}}
\newcommand{\qQ}{{\mathfrak Q}}
\newcommand{\rR}{{\mathfrak R}}
%
%
\newcommand{\OO}{{\cal O}}
\newcommand{\numero}{{n$^{\rm o}\:$}}
\newcommand{\mf}[1]{\mathfrak{#1}}
\newcommand{\mc}[1]{\mathcal{#1}}
\newcommand{\into}{{\hookrightarrow}}
\newcommand{\onto}{{\twoheadrightarrow}}
\newcommand{\Spec}{{\rm Spec}\:}
\newcommand{\BigSpec}{{\rm\bf Spec}\:}
\newcommand{\Spf}{{\rm Spf}\:}
\newcommand{\Proj}{{\rm Proj}\:}
\newcommand{\Pic}{{\rm Pic }}
\newcommand{\Br}{{\rm Br}}
\newcommand{\NS}{{\rm NS}}
\newcommand{\Sym}{{\mathfrak S}}
\newcommand{\Aut}{{\rm Aut}}
\newcommand{\Autp}{{\rm Aut}^p}
\newcommand{\Hom}{{\rm Hom}}
\newcommand{\Ext}{{\rm Ext}}
\newcommand{\ord}{{\rm ord}}
\newcommand{\coker}{{\rm coker}\,}
\newcommand{\divisor}{{\rm div}}
\newcommand{\Def}{{\rm Def}}
\newcommand{\piet}{{\pi_1^{\rm \acute{e}t}}}
\newcommand{\Het}[1]{{H_{\rm \acute{e}t}^{{#1}}}}
\newcommand{\Hcris}[1]{{H_{\rm cris}^{{#1}}}}
\newcommand{\HdR}[1]{{H_{\rm dR}^{{#1}}}}
\newcommand{\hdR}[1]{{h_{\rm dR}^{{#1}}}}
\newcommand{\defin}[1]{{\bf #1}}

\newcommand{\X}{{\mathcal X}}
\newcommand{\Z}{{\mathcal Z}}

\title{On the Birational Nature of Lifting}
\author{Christian Liedtke}
\address{TU M\"unchen, Zentrum Mathematik - M11, Boltzmannstr. 3, D-85748 Garching bei M\"unchen, Germany}
\curraddr{}
\email{liedtke@ma.tum.de}

\author{Matthew Satriano}
\address{Department of Mathematics, University of Michigan, 2074 East Hall, Ann Arbor, MI 48109-1043, USA}
\curraddr{}
\email{satriano@umich.edu}

\date{November 20, 2013}
\subjclass[2010]{14E05, 14D15, 14G17, 14D23}

\begin{abstract}
 Let $X$ and $Y$ be proper birational varieties, say with only rational double points over a perfect field $k$ of positive characteristic.  If $X$ lifts to 
 $W_n(k)$, is it true that $Y$ has the same lifting property? This is true for smooth surfaces, but we show by example that this is false for smooth varieties in higher dimension, and for surfaces with canonical singularities. We also answer a stacky analogue of this question: given a canonical surface $X$ with minimal resolution $Y$ and stacky resolution $\X$, we characterize when liftability of $Y$ is equivalent to that of $\X$.
  
 The main input for our results is a study of how the deformation functor of a canonical surface singularity compares with the deformation functor of its minimal resolution. This extends work of Burns and Wahl to positive characteristic. As a byproduct, we show that Tjurina's vanishing result fails for every canonical surface singularity in every positive characteristic.
\end{abstract}

\maketitle

\section{Introduction}

In 1961, Serre gave a surprising example of a smooth projective variety over a field of positive characteristic which admits no lifting to characteristic 0 \cite{serre}.  The question of whether a variety admits such a lift is oftentimes subtle, and is intimately tied to pathological behavior in positive characteristic.  In this paper, we explore the extent to which liftability is a birational invariant. Since many classification results and constructions in classical algebraic geometry yield singular varieties, and lifting is often easier to establish for these singular models (see, for example \cite{lie}), we will study varieties with mild singularities.

\begin{Question}
 \label{question}
 Let $X$ and $Y$ be proper birational varieties of dimension $d$, say, with at worst rational double points over a perfect field $k$ of positive characteristic.  If $Y$ lifts to $W_n(k)$, is it true that $X$ also lifts to $W_n(k)$?
\end{Question}

Note that this question has two main features: first, we put a bound on the singularities of $X$ and $Y$; second, we ask for unramified lifts, namely lifts to $W_n(k)$ as opposed to extensions of $W_n(k)$.  A bound on the singularities is certainly needed to make Question \ref{question} meaningful.  Indeed, every $d$-dimensional projective variety $X$ is birational, via generic projection, to a hypersurface in $\PP^{d+1}$. This hypersurface may have bad singularities (for example, non-normal), but it always lifts to $W(k)$. On the other hand, $X$ may fail to lift.

Second, recall that there is an important distinction between unramified and ramified lifts of a variety. As is well-known, many fundamental theorems in characteristic 0 fail to hold in positive characteristic: 
global differential forms need not be closed \cite{mumford path} and Kodaira vanishing may fail to hold \cite{raynaud}.  However, if $X$ admits a lift to $W_2(k)$, by a result of Deligne and Illusie \cite{deligne illusie}, these pathologies disappear.  As examples of Lang show \cite{lang}, even if a variety admits a lift to a ramified extension of $W(k)$ with the smallest possible ramification index, namely 2, this is not enough to ensure that global differential forms be closed.  Hence, we restrict attention in Question \ref{question} to the case of unramified lifts.

Question \ref{question} is known to have a positive answer for smooth surfaces. In contrast, we prove the following result for higher dimensional varieties.
\begin{Theorem}
\label{thm:main-higher-dim}
If $d\geq 3$, Question \ref{question} has a negative answer, even if $X$ and $Y$ are smooth. In fact, if $d\geq 5$, there exist
\begin{enumerate}
 \item [(a)] smooth blow-ups of $\PP^d_k$ that do not lift to $W_2(k)$.
 \item [(b)] smooth blow-ups of $\PP^d_k$ that do not lift formally to any ramified extension of $W(k)$.
 \end{enumerate}
\end{Theorem}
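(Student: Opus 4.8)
The plan is to exhibit the failure by blowing up $\PP^d_k$ along a carefully chosen smooth center. Fix a smooth connected closed subvariety $Z\subset\PP^d_k$ of codimension $c\ge2$, and set $X:=\mathrm{Bl}_Z\PP^d_k$, with blow-down $\pi\colon X\to\PP^d_k$ and exceptional divisor $E=\PP(N_{Z/\PP^d_k})$; then $X$ is smooth, projective, and birational to $\PP^d_k$. Since $\PP^d_k$ is smooth and lifts to $W(k)$, it suffices to choose $Z$ so that $X$ fails to lift --- to $W_2(k)$ for (a), and to every ramified extension of $W(k)$ for (b). Moreover, once such a $Z$ is found in $\PP^5$, embedding $\PP^5$ linearly into $\PP^d$ (so that $\mathrm{codim}\,Z=d-2\ge3$) handles all $d\ge5$ at once; the weaker statement for $d=3,4$ --- where $X,Y$ are only required to be smooth and birational, not blow-ups of $\PP^d$ --- I would treat separately, as indicated at the end.

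The crux is the following descent statement, which I would establish first: \emph{if $Z\subset\PP^d_k$ is smooth of codimension $\ge2$ and $X=\mathrm{Bl}_Z\PP^d_k$ lifts to a complete local Noetherian ring $R$ with residue field $k$, then $Z$ lifts to $R$} --- indeed the closed immersion $Z\hookrightarrow\PP^d_k$ lifts. Granting this, for (a) I would take $Z$ to be a smooth projective surface over $k$ that does not lift to $W_2(k)$, for instance one violating Kodaira vanishing \cite{raynaud} or carrying a non-closed global $1$-form \cite{lang,mumford path} --- each pathology obstructs lifting to $W_2(k)$ by Deligne--Illusie \cite{deligne illusie} --- embedded in $\PP^5$ by a generic projection. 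For (b) I would take $Z$ to be a surface realizing Serre's example \cite{serre}, whose obstruction to lifting is group-theoretic in nature and therefore persists over \emph{every} ring of characteristic $0$, ramified or not; again embed it in $\PP^5$ by a generic projection.

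To prove the descent statement, start from a lift $p\colon\mathcal X\to\Spec R$ of $X$. Blowing up a smooth center leaves the cohomology of the structure sheaf unchanged, so $H^1(X,\OO_X)=H^2(X,\OO_X)=0$, and hence every line bundle on $X$ lifts to $\mathcal X$. Lift $\pi^*\OO_{\PP^d_k}(1)$ to a line bundle $\mathcal H$ on $\mathcal X$: since $H^i(X,\pi^*\OO(1))=H^i(\PP^d_k,\OO(1))=0$ for $i>0$, cohomology and base change shows $H^0(\mathcal X,\mathcal H)$ is free of rank $d+1$ with formation commuting with base change and $\mathcal H$ globally generated, so the induced $f\colon\mathcal X\to\PP^d_R$ lifts $\pi$; and because $\pi_*\OO_X=\OO_{\PP^d_k}$ with $R^i\pi_*\OO_X=0$ for $i>0$, the same holds for $f$. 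Next, $H^1(X,\OO_X(E))=0$ --- a short computation along the $\PP^{c-1}$-bundle $E\to Z$ using $\OO_X(E)|_{\PP^{c-1}}=\OO(-1)$ together with $H^i(X,\OO_X)=0$ --- so the equation cutting out $E$ deforms, giving an effective Cartier divisor $\mathcal E\subset\mathcal X$, flat over $R$, with $\mathcal E\times_Rk=E$. Finally, apply $f_*$ to $0\to\OO_{\mathcal X}(-\mathcal E)\to\OO_{\mathcal X}\to\OO_{\mathcal E}\to0$: the fibrewise vanishing $R^if_*\OO_{\mathcal X}(-\mathcal E)=0$ for $i>0$ (as $\OO_X(-E)$ restricts to $\OO(1)$ on the exceptional $\PP^{c-1}$'s) produces an ideal sheaf $\widetilde{\mathcal I}:=f_*\OO_{\mathcal X}(-\mathcal E)\subset\OO_{\PP^d_R}$ whose formation commutes with $R\to k$, whence $\widetilde{\mathcal I}\otimes_Rk=\pi_*\OO_X(-E)=\mathcal I_Z$ (here smoothness of $Z$ is used). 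A diagram chase with the local criterion of flatness then shows $\mathcal Z:=V(\widetilde{\mathcal I})\subset\PP^d_R$ is flat over $R$ with $\mathcal Z\times_Rk=Z$, hence --- as $Z$ is smooth --- smooth over $R$. That is the desired lift of $Z$ (and of its embedding). I expect the cohomology-and-base-change bookkeeping underlying this ``blow-down in families'' to be the only genuinely technical point; everything else is formal.

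For $d=3,4$ I would use the same descent principle contrapositively: if the pair $(\PP^d_k,W)$ does not lift to $W_2(k)$ for some smooth center $W$, then $\mathrm{Bl}_W\PP^d_k$ does not lift to $W_2(k)$, which already answers Question~\ref{question} negatively in dimension $d$ (again with a smooth model birational to $\PP^d_k$). Such $W$ exist --- a curve for $d=3$, a surface for $d=4$ --- because Hilbert schemes are defined over $\ZZ$ and a $k$-point of a flat $\ZZ$-scheme need not lift to $W_2(k)$; alternatively one cites known examples of non-liftable smooth rational $3$- and $4$-folds.
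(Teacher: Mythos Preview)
Your approach for $d\geq5$ is essentially the paper's: blow up $\PP^d_k$ along a smooth surface that fails to lift, and invoke a descent lemma saying that a lift of the blow-up forces a lift of the center as a closed subscheme. The paper packages this descent as Proposition~\ref{higher lifting prop}(3), proved more cheaply than your version: instead of reconstructing the contraction $f\colon\mathcal X\to\PP^d_R$ from $\mathcal H$ and pushing forward the ideal of $\mathcal E$, the paper simply observes $H^1(E,N_{E/Y})=0$ (since $N_{E/Y}$ restricts to $\OO(-1)$ on each fiber), lifts $E$ inside $Y'$, and then blows down both $Y'$ and $E'$ via the Burns--Wahl/Cynk--van~Straten lemma ($Rf_\ast\OO=\OO$). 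Your argument is correct but carries more cohomology-and-base-change bookkeeping than necessary.

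For (b) your choice of center is shakier. Serre's 1961 construction is not obviously a surface, and you assert rather than verify that a surface realization exists. The paper instead takes a surface $S$ violating the Bogomolov--Miyaoka--Yau inequality, which is only known not to lift \emph{projectively}; it then closes the gap by noting $H^2(\OO_{Y_2})=0$, so a formal lift of $Y_2$ algebraizes, and the descent lemma produces a projective lift of $S$ --- a contradiction. Your descent already lands $Z$ inside $\PP^d_R$, so with Grothendieck existence you could run the same BMY argument; but you should say so explicitly rather than lean on Serre.

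The real gap is your treatment of $d=3,4$. You need a smooth center $W\subset\PP^d_k$ (a curve for $d=3$, a surface for $d=4$) whose embedded deformation to $W_2(k)$ is obstructed, and you do not produce one. The Hilbert-scheme remark (``a $k$-point need not lift'') is an existence heuristic, not an example; smooth curves, in particular, lift abstractly, so an embedded obstruction would require a genuinely special construction. The fallback ``cite known non-liftable smooth rational $3$- and $4$-folds'' is circular here, since that is exactly what the theorem is asserting. The paper handles $d=3,4$ by an entirely different construction (Theorem~\ref{thm:main2-higher-dim}): it takes a ruled $d$-fold $X=\PP(\mathcal E\oplus\OO_C^{d-2})$ over a curve $C$ of genus $\geq2$ (which lifts to $W(k)$), and blows up along a smooth curve $D\subset X$ whose composition $D\to X\to C$ is the Frobenius of $C$; a lift of the blow-up would then, via the same descent lemma, lift Frobenius on a curve of genus $\geq2$ to $W_2(k)$, which is impossible by Raynaud. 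You are missing this idea.
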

Our specific counter-examples in dimensions 3 and 4 are given in Theorem \ref{thm:main2-higher-dim}.  In Theorem \ref{thm:main3-higher-dim} we give further examples of $3$-folds with ordinary double points that lift to $W(k)$, but where small resolutions of singularities do not even lift to $W_2(k)$.

We next turn to the case of surfaces with singularities (see Theorem \ref{thm:main2-surfaces} for the counter-examples).
\begin{Theorem}
\label{thm:main-surfaces}
If $d=2$, Question \ref{question} again has a negative answer; however, if $X$ has at worst rational singularities and $Y$ is smooth, then Question \ref{question} has a positive answer.
\end{Theorem}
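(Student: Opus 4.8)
The first assertion is provided by the examples constructed in Theorem~\ref{thm:main2-surfaces}, so I only describe the positive statement. Assume $Y$ is a smooth proper surface over $k$, that $X$ is a proper surface with at worst rational singularities birational to $Y$, and that $Y$ lifts to $W_n(k)$. My plan is to reduce to the minimal resolution and then contract. Since rational surface singularities are normal, $X$ has a minimal resolution $\pi\colon\tilde X\to X$, and $\tilde X$ is a smooth proper surface birational to $Y$; as Question~\ref{question} is known to have a positive answer for smooth surfaces, $\tilde X$ lifts to $W_n(k)$. Thus it suffices to prove the following, in which $Y$ no longer appears: if the minimal resolution $\tilde X$ of a proper surface $X$ with rational singularities lifts to $W_n(k)$, then so does $X$.

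For this, let $E=\bigcup_i E_i$ be the reduced exceptional divisor of $\pi$. Because $X$ has rational singularities, the $E_i$ are smooth rational curves forming disjoint trees with negative-definite intersection matrices, and $R\pi_*\OO_{\tilde X}=\OO_X$. The core of the argument is to produce a lift $\tilde{\X}\to\Spec W_n(k)$ of $\tilde X$ \emph{together with} a closed subscheme $\mathcal E\subset\tilde{\X}$, flat over $W_n(k)$, reducing to $E$ over the closed point; I return to this below. Granting $(\tilde{\X},\mathcal E)$, I would contract: since $\Spec W_n(k)$ is Artinian local and $E$ has negative-definite intersection matrix, a version of Artin's contractibility criterion relative to this base yields a proper morphism $p\colon\tilde{\X}\to\X$ over $W_n(k)$ contracting $\mathcal E$, with $\X$ flat over $W_n(k)$. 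Restricting $p$ to closed fibres and using $R\pi_*\OO_{\tilde X}=\OO_X$ with cohomology and base change identifies the closed fibre of $\X$ with $X$ and shows $\X$ again has only rational singularities (so that, its closed fibre being a scheme over the Artinian ring $W_n(k)$, $\X$ is again a scheme). Hence $\X$ is the desired lift of $X$.

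It remains to construct $(\tilde{\X},\mathcal E)$, and this is the step I expect to be the main obstacle. It cannot be done by merely ``contracting the lift one has'': over a non-reduced base a given lift of $\tilde X$ need not carry any lift of $E$ --- already the $(-2)$-curve on the minimal resolution of a quadric cone fails to deform along the deformation of that resolution to $\PP^1\times\PP^1$ --- so one must choose the lift of $\tilde X$ with $E$ in mind, and a priori there is an obstruction in $H^1$ of the normal bundles of the (possibly $(-2)$-)curves $E_i$. The point is precisely the comparison between the deformation functors $\Def_{\tilde X}$ and $\Def_X$ established in this paper, namely the positive-characteristic extension of Burns--Wahl: it shows that the obstructions to lifting $X$ along its singular locus --- equivalently, to lifting the pair $(\tilde X,E)$ --- are assembled from the obstruction to lifting $\tilde X$, which vanishes by hypothesis, together with the local obstructions at the singular points of $X$, which vanish because rational surface singularities lift to $W(k)$ (immediately so for rational double points), with no additional global contribution. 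Feeding the given lift of $\tilde X$ and these local lifts through the comparison produces $(\tilde{\X},\mathcal E)$, and the previous paragraph concludes. The technical heart of the paper is exactly to make this comparison, and the vanishing it entails, available in positive characteristic.
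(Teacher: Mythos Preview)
Your reduction to the minimal resolution $\tilde X$ and the use of the smooth case to lift $\tilde X$ are exactly right. The gap is the sentence ``It cannot be done by merely contracting the lift one has.'' In fact it can, and this is precisely the paper's argument.

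The paper (Proposition~\ref{best answer}) finishes in one line after your reduction: since $X$ has rational singularities, $R\pi_*\OO_{\tilde X}=\OO_X$, so Proposition~\ref{burns-wahl} (the Burns--Wahl/Cynk--van~Straten blow-down) applies to \emph{any} lift $\tilde X'$ of $\tilde X$. Over the Artinian ring $W_n(k)$ the underlying topological spaces do not change; one equips the space $|X|$ with the sheaf $\pi_*\OO_{\tilde X'}$, and the vanishing $R^1\pi_*\OO_{\tilde X}=0$ forces this ringed space to be a scheme flat over $W_n(k)$ with special fibre $X$. No lift of $E$ and no relative Artin contraction is required. Your own quadric-cone example actually illustrates the point rather than refuting it: the first-order deformation of the resolution along which the $(-2)$-curve fails to survive still pushes forward to a perfectly good first-order deformation of the $A_1$-singularity (trivial when $p\neq 2$, a smoothing direction when $p=2$; cf.\ Proposition~\ref{prop:deformation canonical singularity} and Remark~\ref{beta non zero remark}).

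Your proposed workaround is moreover not justified as written. The comparison in \S\ref{sec:duval} is local and treats only canonical or $\mu_n$-quotient singularities that are linearly reductive quotients, not arbitrary rational singularities; its content is the tangent map $\Def_Y\to\Def_X$, not a mechanism for producing a global lift of the pair $(\tilde X,E)$; and ``lifting $X$'' is not equivalent to ``lifting $(\tilde X,E)$''---the former allows the singularities to deform non-trivially, the latter does not. So the final paragraph neither supplies the pair $(\tilde{\X},\mathcal E)$ nor is needed: drop it and invoke Proposition~\ref{burns-wahl} directly.
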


Lastly, we explore a variant on Question \ref{question} which constitutes the most subtle part of the paper. If $X$ is a surface with canonical singularities, classically one studies the minimal resolution of singularities
$$
  f\,:\,Y\,\to\,X.
$$
Under a further mild assumption on the singularities of $X$, \cite{cst} shows that there is a smooth stack $\X$ with coarse space $X$ whose stacky structure lies over the singular points of $X$. That is, we have a stacky resolution
$$
\pi\,:\,\X\,\to\,X.
$$
The interplay between the birational geometry of $Y$ and $\X$ in characteristic 0 has been the source of many interesting questions, for example, the McKay correspondence \cite{mckay1,mckay2}. Here we ask another question concerning the birational geometry of $Y$ and $\X$, namely the stacky version of Question \ref{question}: is liftability of $\X$ equivalent to that of $Y$?

Since Question \ref{question} has an affirmative answer for smooth surfaces, one might expect that liftability of the smooth stacky surfaces $\X$ and $Y$ is equivalent. We show that this is the case precisely when $X$ does not have wild $A_n$-singularities, that is, $A_n$-singularities with $p$ dividing $n+1$.

\begin{Theorem}
\label{thm:main-stacks}
Let $X$ be a proper surface with canonical singularities that are linearly reductive quotient singularities (see Definition \ref{def:linearly reductive}).
 \begin{enumerate}
  \item \label{stacks:X==>Y} If $\X$ lifts to $W_n(k)$, then $Y$ does as well.
  \item \label{stacks:Y==>X} If $X$ has canonical singularities and no wild $A_n$-singularities, then liftability of $Y$ to $W_2(k)$ implies that of $\X$.
   \item \label{stacks:negative} In characteristic $2$, there is a singular K3 surface $X$ with only (canonical wild) $A_1$-singularities such that $X$ and $Y$ lift formally to $W(k)$, but $\X$ does not lift to $W_2(k)$.
  \end{enumerate}
\end{Theorem}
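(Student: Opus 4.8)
\emph{Proof strategy.} The plan is to treat all three parts through a comparison, at the finitely many non-smooth points of $X$, of the deformation functors $\Def_X$, $\Def_Y$ and $\Def_{\X}$, together with the natural morphisms $\Def_Y\to\Def_X$ (contract the exceptional curves) and $\Def_{\X}\to\Def_X$ (pass to the coarse space). The two inputs are the positive-characteristic comparison of $\Def_Y$ with $\Def_X$ developed earlier in the paper — together with the attendant failure of Tjurina's vanishing, i.e.\ $R^1f_*\Theta_Y\ne 0$ at every canonical point — and the elementary fact that, the stabilizers being linearly reductive, the coarse-space morphism $\pi\colon\X\to X$ is cohomologically trivial, so that the deformation and obstruction theory of $\X$ may be computed downstairs on $X$. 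Note also that away from wild $A_n$-points the stabilizers are \'etale, so there $\X$ is an honest smooth Deligne--Mumford stack, whereas at a wild $A_n$-point the stabilizer $\mu_{n+1}$ fails to be \'etale and the cotangent complex of $\X$ acquires a non-trivial term in positive degree.

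For (\ref{stacks:X==>Y}): given a lift $\X_n$ over $W_n(k)$, its coarse space $X_n$ is again flat over $W_n(k)$ with $X_n\otimes k=X$ — linear reductivity makes the formation of invariants an exact functor, hence compatible with $W_n\to k$, and a $W_n$-module direct summand of a flat module is flat — and \'etale-locally $X_n$ is the coarse space of a $G$-equivariant flat lift of $\Aff^2$ along the rigid group scheme $G$, so carries at each point a quotient singularity of the same type as the corresponding point of $X$. Running the explicit minimal resolution of such a quotient surface singularity over $X_n$ — a construction compatible with reduction modulo $p$ — produces $Y_n\to X_n$ with $Y_n\otimes k=Y$, so $Y$ lifts to $W_n(k)$. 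The one delicate point is checking that the resolution procedure reduces correctly, which is done locally, one singularity type at a time.

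For (\ref{stacks:Y==>X}): start from a lift $Y_2$ over $W_2(k)$. The exceptional locus of $f$ is a disjoint union of $ADE$-configurations of $(-2)$-curves; each constituent $\PP^1$ is infinitesimally rigid and the intersection matrix is negative definite, so the configuration deforms uniquely in $Y_2$ and may be contracted by Artin's criterion over $W_2(k)$, giving a flat lift $X_2$ of $X$. It then suffices to see that $X_2$ again has linearly reductive quotient singularities lifting those of $X$, for then its canonical stack $\X_2$ is the desired lift of $\X$; equivalently, one may contract the deformed exceptional configuration in $Y_2$ directly to the canonical stack, using the combinatorics of the $ADE$-graph. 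The hypothesis enters precisely here: by the local comparison of $\Def_Y$ with $\Def_X$, at a $D_n$, $E_n$, or tame $A_n$-point the image of $\Def_Y\to\Def_X$ on $W_2(k)$-points is contained in that of $\Def_{\X}\to\Def_X$, so the contraction carries the quotient structure along; at a wild $A_n$-point, where Tjurina's vanishing fails and the resolution acquires first-order deformations not descending to the coarse space, this step genuinely breaks. I expect this local statement — that outside the wild $A_n$ case, contracting a lift of the minimal resolution recovers a lift of the linearly reductive quotient singularity — to be the technical heart of the theorem.

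For (\ref{stacks:negative}): one exhibits, in characteristic $2$, a K3 surface $X$ with only wild $A_1$-singularities realizing the failure detected above. I would construct the smooth minimal resolution $Y$ first — a K3 surface carrying a suitable configuration of pairwise disjoint $(-2)$-curves, for instance via an explicit elliptic or quotient model in which the N\'eron--Severi lattice can be controlled — then lift $Y$ to characteristic $0$ as a K3 surface with those $(-2)$-classes remaining algebraic (K3 surfaces lift, and one may prescribe the relevant sublattice of the Picard lattice of the lift), so that the curves deform and may be contracted both over $k$ and in the lift, yielding $X$ together with a lift of $X$ to $W(k)$. It remains to show $\X$ does not lift to $W_2(k)$: since the local model $[\Aff^2/\mu_2]$ does lift, this is a global obstruction, and one computes it in $\mathrm{Ext}^2(L_{\X},\OO_{\X})$, where the positive-degree term of $L_{\X}$ at the wild $A_1$-points — the shadow of the failure of Tjurina's vanishing — contributes an extra summand that, for a suitably chosen $X$, carries the obstruction non-trivially. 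Producing an explicit $X$ for which this global obstruction is provably non-zero — rather than merely exhibiting non-formal local deformation theory — is the main difficulty in this part.
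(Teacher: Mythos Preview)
Your outline for parts (\ref{stacks:X==>Y}) and (\ref{stacks:Y==>X}) is broadly aligned with the paper's argument, with a few imprecisions worth flagging. For (\ref{stacks:X==>Y}) the paper also passes to the coarse space $X'$ of the lift $\X'$, but instead of running an explicit resolution case by case and then gluing, it realizes $Y$ as the blow-up of $X$ along a zero-dimensional closed subscheme $Z$ and lifts $Z$ to a flat $Z'\subset X'$ via a stacky obstruction calculation (pulling $Z$ back to $\X$, lifting there using linear reductivity, and pushing back down); this sidesteps the gluing issue your local approach leaves open. For (\ref{stacks:Y==>X}) you have located the right mechanism --- comparing the images of $\Def_Y\to\Def_X$ and $\Def_\X\to\Def_X$ --- and the paper's proof is exactly that both tangent maps vanish in the non-wild case. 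Two corrections: Tjurina's vanishing fails for \emph{every} canonical surface singularity in positive characteristic (see Remark \ref{tjurina fails}), so it does not single out the wild $A_n$-points; what distinguishes them is that the Burns--Wahl tangent map $\beta\colon H^1(Y,\Theta_Y)\to\Ext^1(L_{X/k},\OO_X)$ is \emph{non-zero} there (Proposition \ref{prop:deformation canonical singularity}), i.e.\ first-order deformations of $Y$ blow down to \emph{non-trivial} deformations of $X$, not to deformations that fail to descend. Also, your claim that stabilizers are \'etale away from wild $A_n$-points is false --- e.g.\ a $D_{n+2}$-singularity with $p\mid n$ and $p\ge 3$ has the non-\'etale linearly reductive stabilizer ${\rm BD}_n$ --- though this does not damage the argument, since Lemma \ref{l:affine-stack-defs} covers all linearly reductive stabilizers uniformly.

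Part (\ref{stacks:negative}) has a genuine gap: you have no concrete mechanism for showing the obstruction is non-zero, and the paper's mechanism is quite different from what you propose. The paper's $X$ is a \emph{generic purely inseparable double cover} $z^2=f(x_0,x_1,x_2)$ of $\PP^2_k$ with $\deg f=6$; such an $X$ has exactly $21$ wild $A_1$-singularities and lifts projectively to $W(k)$ by lifting the cover, while its minimal resolution $Y$ is a K3 surface with $H^2(Y,\Theta_Y)=0$, hence unobstructed. Non-liftability of $\X$ to $W_2(k)$ is then proved not by an $\Ext^2(L_{\X},\OO_\X)$ computation but by a lattice argument: if $\X$ lifted, the proof of (\ref{stacks:X==>Y}) would produce a lift $Y'$ of $Y$ together with lifts $E_i'$ of all $21$ exceptional curves, and compatibility of the intersection pairing with Serre duality would force $d\log\colon\langle E_i'\rangle\otimes_\ZZ W_2(k)\to H^1(Y',\Omega^1_{Y'})$ to be injective --- impossible, since the source is free of rank $21$ over $W_2(k)$ and the target free of rank $20$. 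The essential idea you are missing is that the \emph{number} of singularities, forced to be $21$ by the inseparable-cover construction, exceeds the Hodge number $h^{1,1}=20$ of the K3 resolution.
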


The main input for Theorem \ref{thm:main-stacks} is a study of the relationship between the deformation functor of an isolated canonical singularity and the deformation functor of its minimal resolution. This analysis, which we carry out in \S\ref{sec:duval}, extends results of Burns and Wahl \cite{bw} to positive characteristic, and supplements the work of Wahl \cite{wahl}. 
We show that for canonical singularities that are linearly reductive quotient singularities but not wild $A_n$-singularities, many results from \cite{bw} still hold true in positive characteristic. 
On the other hand, we show in Remark \ref{tjurina fails} that Tjurina's vanishing result \cite{tjurina} fails for every canonical singularity in every positive characteristic.

We conclude the introduction by mentioning that our above results also answer the following variant on Question \ref{question}.

\begin{Question}
 \label{question2}
 Let $X$ and $Y$ be proper birational varieties with at worst rational double points over a perfect field $k$ of positive characteristic.  If $Y$ lifts to $W(k)$, does $X$ lifts to an \emph{extension} of $W(k)$?
\end{Question}

Although Theorem \ref{thm:main-surfaces} shows that Question \ref{question} has a negative answer for surfaces, Artin's result \cite[Theorem 3]{artin simult res} show that Question \ref{question2} has a \emph{positive} answer for surfaces.  In contrast, the examples we produce in Theorems \ref{thm:main-higher-dim}(b) and \ref{thm:main3-higher-dim} show that Question \ref{question2} has a negative answer in higher dimension.

\begin{Organization}
In Section \ref{sec:higher dimension} we start with a couple of general lifting results and then construct counter-examples to Question \ref{question} in dimension $\geq3$, thereby establishing 
Theorem \ref{thm:main-higher-dim}.
In Section \ref{sec:dimension two} we turn to surfaces and establish the results sketched in Theorem \ref{thm:main-surfaces}.
We begin Section \ref{sec:duval} by recalling the definition of linearly reductive quotient singularities, and giving a complete description of which canonical singularities are of this form. 
We then study the deformation functors of these singularities and obtain counter-examples to Tjurina vanishing.
Finally, in Section \ref{sec:stackyres}, we compare minimal with stacky resolutions of canonical and linearly reductive quotient singularities of surfaces, which leads to a proof
of Theorem \ref{thm:main-stacks}.
\end{Organization}

\begin{Acknowledgements}
 We would like to thank Dan Abramovich, Bhargav Bhatt, Brian Conrad, 
 S\l awomir Cynk, Torsten Ekedahl, Anton Geraschenko, Jesse Kass, Holger Partsch, David Rydh,
 Felix Sch\"uller, Matthias Sch\"utt, and the referee for helpful conversations.
 The first named author was supported by DFG grant LI 1906/1-2  and Transregio SFB 45 
 and thanks the departments of mathematics 
 at Stanford university and Bonn University for kind hospitality.  
 The second named author was supported by NSF grant DMS-0943832 and an NSF postdoctoral fellowship (DMS-1103788).
\end{Acknowledgements}

\section*{Notation and Conventions}

%
Unless otherwise mentioned, all algebraic stacks are assumed to be locally of finite presentation with finite diagonal, so that by Keel--Mori \cite{km}, they have coarse spaces.

For a scheme $X$ over $k$, we let $\Theta_X:=\mc{H}om(\Omega^1_{X/k},\OO_X)$.


\section{Counter-examples in higher dimension}
\label{sec:higher dimension}

In this section, we first recall in \S\ref{subsec:genres} some general results concerning liftings and blow-downs, mostly following directly from \cite{bw}.
Then, in \S\ref{subsec:higher-dim-counter-ex}, we give examples of smooth, projective and birational varieties of dimension at least $3$ with different
lifting behaviors.
More precisely, we prove
Theorems \ref{thm:mainbody-higher-dim}, \ref{thm:main2-higher-dim}, and \ref{thm:main3-higher-dim} which give refined versions of Theorem \ref{thm:main-higher-dim}.

\subsection{General lifting results}
\label{subsec:genres}
Throughout this subsection, let $A$ be a complete Noetherian local ring with perfect residue field $k$. We begin by recalling a result of Burns and Wahl \cite[Proposition 2.3]{bw} which shows that certain deformations can be blown-down.
In the following form, the result is due to Cynk and van~Straten \cite[Theorem 3.1]{CvS}.

\begin{Proposition}[Burns--Wahl, Cynk--van~Straten]
 \label{burns-wahl}
 Let $X$ and $Y$ be schemes over $k$. 
 Let $f:Y\to X$ be a morphism such that $Rf_\ast\OO_Y=\OO_X$.
 If $Y$ formally lifts to $A$, then $X$ does as well.  Explicitly, if $Y'$ is a formal 
 lift of $Y$ to $A$, then we 
 may view $\OO_{Y'}$ as a sheaf on the topological space $Y$; the topological space of $X$ 
 endowed with the sheaf $f_*\OO_{Y'}$ is a lift of $X$ to $A$.
\end{Proposition}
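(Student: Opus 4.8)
The plan is to construct the lift of $X$ explicitly by pushing forward the structure sheaf of a given lift $Y'$ of $Y$, exactly as the statement suggests, and then to verify flatness over $A$ by a cohomology-and-base-change argument combined with a standard dévissage up the Artinian tower. First I would fix a lift $Y' \to \Spf A$ of $Y$, and set $X' := (|X|, f_* \OO_{Y'})$, where $|X|$ is the underlying topological space of $X$ and $f$ is regarded as a continuous map $|Y| \to |X|$; note $X'$ is a formal scheme supported on $|X|$ whose structure sheaf is, a priori, a sheaf of topological $A$-algebras. The claim to prove is that $X'$ is flat over $A$ and that $X' \otimes_A k \iso X$. The latter reduction-mod-$\mm_A$ statement is where the hypothesis $Rf_*\OO_Y = \OO_X$ enters: one wants $f_*\OO_{Y'} \otimes_A k \iso f_*(\OO_{Y'} \otimes_A k) = f_*\OO_Y = \OO_X$, the last equality being the degree-$0$ part of $Rf_*\OO_Y = \OO_X$, and the middle isomorphism being a base-change statement that needs justification.

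The key technical step, and the one I expect to be the main obstacle, is the base-change isomorphism $f_*\OO_{Y'} \otimes_A M \iso f_*(\OO_{Y'} \otimes_A M)$ for finite length $A$-modules $M$, together with the vanishing $R^i f_*(\OO_{Y'} \otimes_A M) = 0$ for $i > 0$. One proceeds by induction on the length of $M$: the case $M = k$ is the hypothesis $Rf_*\OO_Y = \OO_X$ (here we also use that $Y'$ is flat over $A$, so that $\OO_{Y'} \otimes_A^{\mathbf{L}} k = \OO_{Y'} \otimes_A k = \OO_Y$). For the inductive step, given a short exact sequence $0 \to k \to M \to M' \to 0$ of $A$-modules with $M'$ of smaller length, tensoring the flat $\OO_{Y'}$ gives a short exact sequence $0 \to \OO_Y \to \OO_{Y'} \otimes_A M \to \OO_{Y'} \otimes_A M' \to 0$ of sheaves on $Y'$ (equivalently on $|Y|$); applying $Rf_*$ and using the long exact sequence together with the inductive hypotheses $R^i f_*\OO_Y = 0$ and $R^i f_*(\OO_{Y'} \otimes_A M') = 0$ for $i > 0$ yields $R^i f_*(\OO_{Y'} \otimes_A M) = 0$ for $i > 0$ and a short exact sequence $0 \to \OO_X \to f_*(\OO_{Y'} \otimes_A M) \to f_*(\OO_{Y'} \otimes_A M') \to 0$; comparing with $0 \to \OO_X \to f_*\OO_{Y'} \otimes_A M \to f_*\OO_{Y'} \otimes_A M' \to 0$ — the latter obtained by tensoring the defining short exact sequence of $M$ with the $A$-module $f_*\OO_{Y'}$, which is exact once one knows $f_*\OO_{Y'}$ is $A$-flat — and invoking the five lemma gives the base-change isomorphism for $M$.

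There is a circularity to untangle here: the exactness of $0 \to \OO_X \to f_*\OO_{Y'} \otimes_A M \to f_*\OO_{Y'} \otimes_A M' \to 0$ presupposes the $A$-flatness of $f_*\OO_{Y'}$, which is itself part of what we are proving. The clean way around this is to carry the flatness into the induction: by the local criterion for flatness, $f_*\OO_{Y'}$ (or rather each of its stalks, or its value on an affine open of $|X|$, since the question is local on $|X|$ and $Y' \to A$ is affine over a cofinal system) is $A$-flat if and only if $\mathrm{Tor}_1^A(k, f_*\OO_{Y'}) = 0$, and one shows this Tor vanishing simultaneously with the base-change statement by the same length induction — using that for the first syzygy it suffices to test against residue field $k$, and the sequence $0 \to \OO_Y \to \OO_{Y'}\otimes_A M \to \OO_{Y'}\otimes_A M' \to 0$ stays exact after applying $f_*$ at each stage. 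Once flatness and the base-change isomorphism are established for all finite length $A$-modules, passing to the limit over $A/\mm_A^n$ shows $X' = (|X|, f_*\OO_{Y'})$ is a flat formal lift of $X$ to $\Spf A$ with special fiber $X$, which is precisely the assertion. Finally I would remark that no Noetherian or properness hypotheses on $f$ beyond what is needed for $Rf_*$ to behave well are used; the statement is purely about the derived pushforward condition $Rf_*\OO_Y = \OO_X$ propagating along the deformation, and this is what makes it applicable to the blow-down situation $f: Y \to X$ with $Y$ a resolution and $\OO_X$ having rational singularities.
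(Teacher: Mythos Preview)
The paper does not give its own proof of this proposition; it is stated with attribution to Burns--Wahl \cite[Proposition 2.3]{bw} and Cynk--van~Straten \cite[Theorem 3.1]{CvS} and then used as a black box. Your argument is the standard one and is essentially what appears in \cite{CvS}: push forward $\OO_{Y'}$ along $f$, and verify flatness and the correct special fibre by a d\'evissage using the long exact sequence for $Rf_*$ together with the hypothesis $Rf_*\OO_Y=\OO_X$.

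One remark on presentation: the circularity you flag --- needing $A$-flatness of $f_*\OO_{Y'}$ to tensor the defining sequence of $M$ exactly, while that flatness is part of what is being proved --- is real in your formulation but is most cleanly sidestepped by inducting not over arbitrary finite-length $M$ but along the tower $A_n=A/\mm^{n+1}$ (equivalently, one small extension at a time). At each step, flatness of $\OO_{Y'_n}$ over $A_n$ gives a short exact sequence
\[
0\,\to\,\OO_Y\otimes_k(\mm^n/\mm^{n+1})\,\to\,\OO_{Y'_n}\,\to\,\OO_{Y'_{n-1}}\,\to\,0;
\]
applying $f_*$ and using the inductive vanishing of $R^if_*\OO_{Y'_{n-1}}$ for $i>0$ (together with the hypothesis $R^if_*\OO_Y=0$) yields the analogous short exact sequence on $|X|$, from which $A_n$-flatness of $f_*\OO_{Y'_n}$ (via the local criterion) and the identification $f_*\OO_{Y'_n}\otimes_{A_n}A_{n-1}\cong f_*\OO_{Y'_{n-1}}$ follow directly. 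This never requires tensoring $f_*\OO_{Y'}$ with an auxiliary module before its flatness is known, so the circularity simply does not arise.
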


We continue with a simple lifting result, which shows that in certain cases,
Question \ref{question} has an affirmative answer.
On the other hand, the counter-examples 
in \S\ref{subsec:higher-dim-counter-ex} below will show that one should neither expect
the converse lifting implications to hold nor to hope for more general lifting results 
in dimension at least $3$. 

\begin{Proposition}
 \label{higher lifting prop}
 Let $f:Y\to X$ be a birational morphism between two smooth proper varieties over $k$.
 \begin{enumerate}
  \item If $Y$ lifts formally to $A$, then $X$ does as well.
  \item If $f$ is the blow-up of a closed point and $X$ lifts formally to $A$, then $Y$ lifts to $A$.
  \item If $f$ is the blow-up of a smooth subvariety $Z\subset X$ of codimension at least $2$,
    and if $Y$ lifts formally to $A$, then so do $Z$ and $X$. Moreover, there also exists a formal 
    lift of $Z$ as a subvariety of $X$.
 \end{enumerate}
\end{Proposition}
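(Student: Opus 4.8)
Part (1) is immediate from Proposition \ref{burns-wahl}: a birational morphism $f\colon Y\to X$ between smooth proper varieties satisfies $Rf_\ast\OO_Y=\OO_X$ (the higher direct images vanish because $f$ has connected fibers and $X$ is smooth, or by a standard argument reducing to the case where $f$ is a composition of blow-ups along smooth centers), so liftability of $Y$ descends to $X$.

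For part (2), the plan is to use the functorial description of the blow-up together with Grothendieck's existence/algebraization theorem. Suppose $X'\to\Spf A$ is a formal lift of $X$, and let $z\in X$ be the center of $f$. The key point is that the closed point $z\subset X$ lifts canonically: since $X$ is smooth, $z$ lifts to a section $Z'\hookrightarrow X'$ of the structure morphism (smoothness gives the required infinitesimal lifting of the section level by level, and one takes the limit). Now form the formal blow-up $Y':=\mathrm{Bl}_{Z'}X'\to\Spf A$. Because blowing up commutes with flat base change, the special fiber of $Y'$ is $\mathrm{Bl}_z X = Y$; flatness of $Y'$ over $A$ follows because the blow-up of a regular sequence is flat, or more simply because $Y'$ is locally cut out in a projective bundle over $X'$ by equations whose reduction mod the maximal ideal of $A$ defines $Y$ without introducing embedded components. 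Hence $Y'$ is a lift of $Y$. (If one wants an algebraic or projective lift when $X'$ is such, one notes the blow-up of a projective scheme is projective.)

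For part (3), assume $Y'\to\Spf A$ lifts $Y=\mathrm{Bl}_Z X$. By part (1), $X$ lifts to $A$; but to get the \emph{compatible} lifts of $Z$ and of $Z\subset X$, the plan is to recover $Z$ and $X$ intrinsically from $Y$. The exceptional divisor $E\subset Y$ is the unique prime divisor contracted by $f$, and over the blown-up smooth center it is a $\PP^{c-1}$-bundle $E\to Z$ (where $c=\mathrm{codim}(Z,X)$); the contraction $Y\to X$ is characterized by the linear system associated to the relative anticanonical-type class, or more robustly by $X=\mathrm{Proj}\bigl(\bigoplus_m f_\ast\OO_Y(-mE)\bigr)$ near $Z$. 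The strategy is: (i) produce a lift $X'$ of $X$ by Proposition \ref{burns-wahl} applied to $f$; (ii) observe that the exceptional divisor $E$ deforms with $Y$ — since $E$ is a smooth divisor and $\OO_Y(E)$ is a line bundle, the obstruction to lifting $(Y,E)$ relative to lifting $Y$ lies in $H^1$ of a sheaf that vanishes here because $E$ is rigid in its normal direction along the fibers, so after possibly passing through the intrinsic characterization one gets a lift $E'\subset Y'$; (iii) take the image of $E'$ under $Y'\to X'$, which is a lift $Z'\subset X'$ — concretely, $Z'$ is the scheme-theoretic image, and flatness is checked fiberwise. The lift of $Z$ as a subvariety of $X$ is then $Z'\subset X'$, and $Z$ itself lifts as the abstract scheme underlying $Z'$.

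The main obstacle is step (3)(ii)–(iii): unlike the blow-down of part (1), recovering the \emph{center} of a blow-up from the total space requires care, because a priori $Y$ could admit many contractions or $E$ could deform in unexpected ways over $A$. The cleanest fix is to argue that $E'$ is forced: $E\to Z$ being a projective bundle over a smooth base, the pair $(Y,E)$ has unobstructed deformations and the forgetful map to deformations of $Y$ is smooth (the relevant $H^2(\Theta_Y(-\log E)) \to H^2(\Theta_Y)$ considerations, or simply normal-bundle computations $N_{E/Y}=\OO_E(-1)$ along fibers), so $E$ lifts along \emph{any} lift $Y'$; then $Z'$ and $X'$ are produced by the relative Proj construction $X' = \mathrm{Proj}\,\bigoplus_m f'_\ast\OO_{Y'}(-mE')$, which one checks is flat over $A$ with special fiber $X$ and contains $Z'=f'(E')$ as a smooth lift of $Z$. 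Verifying flatness and the fiber identifications in this last construction is the only genuinely technical point; everything else is formal.
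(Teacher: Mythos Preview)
Your overall strategy matches the paper's for all three parts, but part~(1) has a gap in justification and part~(3) is needlessly complicated.

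For part~(1), neither of your proposed reasons for $Rf_\ast\OO_Y=\OO_X$ is valid in positive characteristic. Connectedness of fibers yields only $f_\ast\OO_Y=\OO_X$; the vanishing of the higher direct images is a genuinely nontrivial fact in characteristic $p$, where Grauert--Riemenschneider-type arguments are unavailable. Your alternative of reducing to a composition of blow-ups in smooth centers presupposes weak factorization, which is open in positive characteristic. The paper closes this gap by citing Chatzistamatiou--R\"ulling \cite[Corollary~3.2.4]{rulling}, which establishes $Rf_\ast\OO_Y=\OO_X$ for any birational morphism between smooth varieties over an arbitrary field; you should invoke this result directly.

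For part~(3), the paper's proof is exactly your ``cleanest fix'' at the end, without the preceding detours. One observes that $N_{E/Y}$ restricts to $\OO(-1)$ on each fiber of the projective bundle $E\to Z$, so the Grothendieck--Leray spectral sequence gives $H^1(E,N_{E/Y})=0$; hence the closed subscheme $E\subset Y$ lifts unobstructedly to some $E'\subset Y'$. Then one blows down as in Proposition~\ref{burns-wahl} to produce the lift $X'$ of $X$ together with a lift of $Z$ as a closed subscheme of $X'$. The relative Proj construction and the log-tangent considerations you sketch are not needed.
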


\begin{proof}
By \cite[Corollary 3.2.4]{rulling}, we have $Rf_\ast\OO_Y=\OO_X$, and so (1) follows from Proposition \ref{burns-wahl}.

To prove (2), let $X'$ be a lift of $X$ to $\Spf A$. Since $X'$ is smooth over $\Spf A$ and $k$ is perfect, 
there exists a local and \'etale $A$-algebra $B$ together
with a morphism $\sigma:\Spf B\to X'$ that specializes to the closed point of the blow-up $f$,
see \cite[Proposition 2.2.14]{BLR}.
Then, the blow-up of $X'$ in $\sigma(\Spf B)$ is a formal lift of $Y$ to $A$.

Lastly, we prove (3).  Let $E$ be the 
exceptional divisor of $f$, and let $Y'$ be a formal lift of $Y$ to $A$. 
The normal bundle $N_{E/Y}$ restricts to
$\OO(-1)$ on every fiber of the projective bundle $g:E\to Z$, and 
then, the Grothendieck--Leray spectral sequence of $g$ implies
$H^1(N_{E/Y})=0$, see, for example, \cite[Examples 3.14.13(iv)]{sernesi}. 
Since $E$ and $Y$ are smooth, the obstruction to deforming $E\subset Y$ is contained in $H^1(N_{E/Y})$,
and we conclude that $E$ lifts to a closed subscheme $E'\subset Y'$. 
By Proposition \ref{burns-wahl}, 
we obtain a formal lift $X'$ of $X$ and 
a lift of $Z$ to a closed subscheme of $X'$.
\end{proof}

\subsection{Counter-examples}
\label{subsec:higher-dim-counter-ex}
We begin this subsection with the counter-examples which were announced as Theorem \ref{thm:main-higher-dim}
in dimension $d\geq 5$.

\begin{Theorem}
\label{thm:mainbody-higher-dim}
 Let $k$ be an algebraically closed field of positive characteristic and let $d\geq5$.  Then there exist blow-ups in smooth centers 
 \begin{enumerate}
  \item [(a)] $f_1:Y_1\to\PP_k^d$ such that
     $Y_1$ does not lift to $W_2(k)$, and
  \item [(b)] $f_2:Y_2\to\PP_k^d$ such that 
     $Y_2$ does not lift formally to any ramified extension of $W(k)$.
 \end{enumerate}
 On the other hand, $\PP_k^d$ lifts projectively to $W(k)$.
\end{Theorem}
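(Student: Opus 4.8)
The plan is to produce, for each of the two bad behaviors, a smooth variety $V$ of some dimension $m$ (with $m \leq d$) which is known in the literature to fail the corresponding lifting property, and then to exhibit a blow-up of $\PP^d_k$ in a smooth center that dominates $V$ birationally — so that Proposition \ref{higher lifting prop}(1) forces the blow-up to be non-liftable as well. The point of the bound $d \geq 5$ is precisely that we need enough room to embed such a $V$ (and its resolutions, or a convenient birational model) into projective space and then resolve the indeterminacy of a rational map.

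\medskip

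\noindent\textbf{Step 1: Non-liftable input varieties.} First I would invoke the classical examples. For part (a), take $V_1$ to be a smooth projective variety of small dimension (Serre's example, or one of its variants, which lives in dimension $2$ or $3$) that does not lift to $W_2(k)$; for part (b), take $V_2$ to be a variety that does not lift to any ramified extension of $W(k)$, such as one of Lang's examples (again of small dimension). In each case the dimension of $V_i$ is at most $3$, comfortably below $5$.

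\medskip

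\noindent\textbf{Step 2: Reduce to a blow-up of $\PP^d$.} Embed $V_i$ as a smooth closed subvariety of $\PP^d_k$ — possible once $d \geq 2\dim V_i + 1$, so $d \geq 5$ suffices since $\dim V_i \leq 3$. Then consider a general linear projection, or rather: realize $V_i$ as the image of a rational map from $\PP^d_k$, or connect $\PP^d_k$ to $V_i$ by a chain of blow-ups. Concretely, I would take the blow-up $f_i : Y_i \to \PP^d_k$ along $V_i$ (viewed as a smooth center) and then argue that $Y_i$ admits a birational morphism, or a further blow-up admits a birational morphism, onto a smooth variety birational to $V_i$; alternatively, resolve the indeterminacy of a suitable projection $\PP^d_k \dashrightarrow V_i$ by a sequence of blow-ups in smooth centers, obtaining $Y_i \to \PP^d_k$ together with a morphism $Y_i \to \widetilde V_i$ where $\widetilde V_i$ is smooth and birational to $V_i$. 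Since liftability is a birational invariant among smooth proper varieties by Proposition \ref{higher lifting prop}(1) (applied in the direction: $\widetilde V_i$ liftable $\Leftarrow$ $V_i$ liftable, and $V_i$ liftable $\Leftarrow$ $\widetilde V_i$ liftable, using $Rf_*\OO = \OO$ for birational maps between smooth proper varieties), $\widetilde V_i$ inherits the non-liftability of $V_i$. Then by another application of Proposition \ref{higher lifting prop}(1) to the morphism $Y_i \to \widetilde V_i$, we conclude $Y_i$ does not lift (to $W_2(k)$ in case (a); to any ramified extension of $W(k)$ in case (b)).

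\medskip

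\noindent\textbf{Step 3: $\PP^d_k$ lifts projectively.} This last clause is immediate: $\PP^d_{W(k)}$ is a projective lift of $\PP^d_k$.

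\medskip

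\noindent The main obstacle I expect is Step 2 — organizing the birational geometry so that $Y_i$ is literally a blow-up of $\PP^d_k$ in a \emph{smooth} center (or a composition of such, which by Proposition \ref{higher lifting prop}(2),(3) is harmless for the lifting analysis), while still carrying a morphism onto something birational to the bad variety $V_i$. One clean way is: embed $V_i \hookrightarrow \PP^{N}$ for small $N$, take a sub-$\PP^{m}$ with $V_i \subset \PP^{m} \subset \PP^d$ on which the linear projection $\PP^d \dashrightarrow \PP^{m}$ restricts to the identity near $V_i$, blow up $\PP^d$ along the center of projection (a smooth linear subspace) to get a $\PP^{m}$-bundle over... — but more simply, blow up $\PP^d_k$ along $V_i$ itself: the exceptional divisor is a projective bundle over $V_i$, hence maps onto $V_i$, and one checks this projective-bundle projection, composed appropriately, exhibits $V_i$ as dominated by (a piece of) $Y_i$; since $V_i$ is a divisor's worth of data inside $Y_i$ the argument needs the slightly more careful route of resolving a projection, which is where the dimension count $d \geq 5$ is genuinely used. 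The verification that all centers occurring are smooth, and the bookkeeping of which lifting target ($W_2(k)$ versus ramified extensions) is preserved at each blow-up via Proposition \ref{higher lifting prop}, is routine but is the technical heart of the proof.
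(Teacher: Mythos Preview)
Your Step~2 has a genuine gap that cannot be repaired along the lines you suggest. You want to apply Proposition~\ref{higher lifting prop}(1) to a morphism $Y_i \to \widetilde V_i$, but that proposition requires a \emph{birational} morphism between smooth proper varieties, and $Y_i$ has dimension $d \geq 5$ while $\widetilde V_i$, being birational to $V_i$, has dimension at most $3$. No such birational morphism exists. You notice this yourself when you remark that the exceptional divisor of $\mathrm{Bl}_{V_i}\PP^d$ is a projective bundle over $V_i$: that gives a morphism from the \emph{divisor} to $V_i$, not from $Y_i$. Likewise, resolving a linear projection from a complementary linear subspace produces a morphism from a blow-up of $\PP^d$ to a \emph{projective space}, never to $V_i$. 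Since every blow-up of $\PP^d$ is rational, any smooth proper variety it dominates birationally is again rational, so this route cannot reach a non-liftable $V_i$.

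The paper's argument sidesteps all of this by using Proposition~\ref{higher lifting prop}(3) rather than (1): if $Y = \mathrm{Bl}_Z X$ with $Z \subset X$ smooth and $Y$ lifts, then $Z$ lifts as a subvariety of a lift of $X$. So one simply embeds a non-liftable smooth surface $S$ into $\PP^5 \subseteq \PP^d$ (this is why $d \geq 5$) and blows up along $S$; a lift of the blow-up would force a lift of $S$. Two further points you are missing: for (a) one needs a surface that specifically fails to lift to $W_2(k)$ --- Raynaud's counter-examples to Kodaira vanishing work via Deligne--Illusie, whereas Serre's example is only known not to lift to characteristic zero; and for (b) the paper uses surfaces violating Bogomolov--Miyaoka--Yau (Lang's examples \emph{do} lift, so they are the wrong input), together with the observation $H^2(\OO_{Y_2}) = 0$ to algebraize a hypothetical formal lift before invoking Proposition~\ref{higher lifting prop}(3), since the non-liftability of those surfaces is a projective statement.
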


\begin{proof}
Let $S_1$ be a smooth projective surface over $k$ that does not 
lift to $W_2(k)$.
For example, we could choose $S_1$ to be a characteristic $p$
counter-example to Kodaira vanishing from \cite[\S2]{raynaud}, which cannot
lift to $W_2(k)$ by \cite[Corollaire 2.8]{deligne illusie}. 
Since every smooth and projective surface over $k$ 
can be embedded into $\PP_k^5$, 
we may assume $S_1\subset\PP_k^5\subseteq\PP_k^d$.
If $f_1:Y_1\to\PP_k^d$ is the blow-up in $S_1$, then
$Y_1$ does not lift to $W_2(k)$ by Proposition \ref{higher lifting prop}(3).

Next, let $S_2$ be a smooth projective surface over $k$ that does not lift
projectively to any ramified extension of $W(k)$. 
For example, we could choose $S_2$ to be a characteristic $p$ counter-example 
to the Bogomolov--Miyaoka--Yau inequality from \cite[3.5J]{hirzebruch} or \cite{easton}:
then, since $K^2$ and $\chi(\OO)$ are invariant under flat deformations, 
a hypothetical projective lift of $S_2$ to a possibly ramified extension of $W(k)$ would contradict
the Bogomolov--Miyaoka--Yau inequality in characteristic zero.
As before, we choose embeddings $S_2\subseteq\PP_k^5\subseteq\PP_k^d$
and let $f_2:Y_2\to\PP_k^d$ be the blow-up along $S_2$.

Suppose $Y_2$ has a formal lift $Y'_2$ to a possibly ramified extension $R$ of $W(k)$. 
Since $Y_2$ is a smooth and rational variety, we have $H^2(\OO_{Y_2})=0$. Since $H^2(\OO_{Y_2})$ is the 
obstruction space to deforming invertible sheaves, every invertible sheaf of $Y_2$ lifts to $Y'_2$. 
Therefore, lifting an ample invertible sheaf to $Y_2'$, we conclude that $Y'_2$ is algebraizable and projective by 
Grothendieck's existence theorem \cite[Theorem 8.4.10]{GET}. 
By Proposition \ref{higher lifting prop}(3), we obtain a projective lift of $S_2$ to $R$, which is a contradiction.
\end{proof}

Next, we give lower dimensional counter-examples, whose constructions are
inspired by Raynaud's construction of characteristic $p$ counter-examples to Kodaira vanishing \cite[\S2]{raynaud}.

\begin{Theorem}
\label{thm:main2-higher-dim}
 For every algebraically closed field $k$ of positive characteristic and integer $d\geq3$, there exists 
 \begin{enumerate}
  \item a smooth ruled $d$-dimensional variety $X$ over $k$ that lifts projectively to $W(k)$, and
  \item a blow-up $f:Y\to X$ in a smooth curve such that $Y$ does not lift to $W_2(k)$.
 \end{enumerate}
\end{Theorem}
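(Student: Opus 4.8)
The plan is to imitate Raynaud's construction of counter-examples to Kodaira vanishing, realizing $X$ as a projective bundle over a curve so that projective liftability over $W(k)$ is automatic, while arranging a curve $C\subset X$ whose blow-up fails to lift to $W_2(k)$. First I would start in characteristic $p>0$ with a smooth projective curve $B$ together with a finite purely inseparable (or suitably wild cyclic) cover, and build from it a line bundle $L$ on $B$ for which the associated $\PP^1$-bundle, or more generally a $\PP^{d-1}$-bundle, carries a section or multisection $C$ with pathological cohomological behaviour — exactly the mechanism that makes Kodaira vanishing fail. Concretely, I expect $X$ to be taken as $\PP(\mc E)$ for an appropriate rank-$d$ bundle $\mc E$ on $B$ (for $d=2$ this is a ruled surface over $B$), so that $X$ is a smooth projective variety which lifts projectively to $W(k)$: it is a projective bundle over a curve, and curves together with vector bundles on them lift canonically (indeed $\PP(\mc E)$ lifts by lifting $B$ and $\mc E$, using $H^2$-vanishing to lift the relevant polarization, then invoking Grothendieck's existence theorem as in the proof of Theorem \ref{thm:mainbody-higher-dim}).

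Next I would let $f:Y\to X$ be the blow-up along a carefully chosen smooth curve $C\subset X$ and argue that $Y$ does not lift to $W_2(k)$. The key point is to choose $C$ so that liftability of $Y$ would force, via the Deligne--Illusie theorem, degeneration of a Hodge-to-de-Rham type spectral sequence or closedness of global differential forms on $Y$ (or on a divisor inside $Y$), and then to show directly that this fails because of the inseparable cover hidden in the construction of $X$ and $C$. Alternatively, and perhaps more cleanly, I would run the argument through Proposition \ref{higher lifting prop}(3): if $Y$ lifts to $W_2(k)$ then so does $C$ as an abstract variety \emph{and} the pair $(C\subset X)$, hence in particular the normal bundle data of $C$ in $X$ lifts; one then shows that such a compatible lift would contradict a non-liftability statement built into the Raynaud-type geometry — for instance it would produce a lift of a non-liftable configuration of a curve inside a ruled variety, or force a vanishing theorem (Kodaira-type, via Deligne--Illusie) that is known to fail here.

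The main obstacle I anticipate is the explicit construction in low dimension, particularly $d=3$ and especially $d=2$ where the surface $X$ must be ruled over a curve yet admit a blow-up of a single smooth curve that is genuinely obstructed mod $p^2$; Raynaud's examples live naturally in a setting where the total space is a surface fibred over a curve, so adapting them to produce the required curve $C$ on a higher-dimensional ruled $X$ with the precise normal-bundle and cohomology properties needed for the obstruction argument is the delicate step. The supporting facts — that projective bundles over curves lift projectively to $W(k)$, that $H^2(\OO_X)=0$ so polarizations lift, that blowing up a smooth curve behaves well under Proposition \ref{higher lifting prop}, and that Deligne--Illusie forbids the resulting pathology — are all routine once the geometry of $C\subset X$ is pinned down, so I would concentrate the real work there and defer the cohomological bookkeeping to citations of \cite{raynaud} and \cite{deligne illusie}.
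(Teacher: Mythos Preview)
Your skeleton is right and matches the paper's strategy: take $X=\PP(\mc E\oplus\OO_C^{\,d-2})$ for a rank-$2$ bundle $\mc E$ on a curve $C$ (so $X$ is ruled and lifts projectively to $W(k)$ for the reasons you give), blow up a smooth curve $D\subset X$, and use Proposition~\ref{higher lifting prop}(3) to derive a contradiction from a putative lift of $Y$ to $W_2(k)$. What is missing from your proposal is the \emph{specific} choice of $D$ and the \emph{specific} contradiction --- and this is the whole content of the argument, not just bookkeeping.

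The paper takes $(C,\mc E,D)$ from Raynaud's construction: $D\subset\PP(\mc E)\subset X$ is a smooth curve such that the composite $D\hookrightarrow\PP(\mc E)\to C$ is the Frobenius morphism (so $D^{(p)}=C$, and in particular $g(C)\geq 2$). If $Y$ lifted to $W_2(k)$, then by Proposition~\ref{higher lifting prop}(3) one gets a lift $D'\subset X'$; blowing $X'$ down to a lift $C'$ of $C$ via Proposition~\ref{burns-wahl}, the composite $D'\to X'\to C'$ is a lift of Frobenius to $W_2(k)$. But Frobenius on a curve of genus $\geq 2$ does not lift to $W_2(k)$ (Raynaud's lemma, \cite[Lemma I.5.4]{raynaud2}), and that is the contradiction. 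Your alternative route via Deligne--Illusie on $Y$ itself is not what the paper does, and it is not clear it would go through: you would need a non-closed global $1$-form or a Kodaira-vanishing failure on the blow-up $Y$, which is not immediate from Raynaud's surface data. The actual obstruction is the non-liftability of Frobenius, not a cohomological pathology of $Y$.
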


\begin{proof}
By \cite[\S2]{raynaud}, there exists a projective smooth curve $C$ of genus at least $2$ over $k$, 
a locally free sheaf ${\mathcal E}$ of rank $2$ on $C$, 
and a closed subscheme $D$ of the surface $\PP({\mathcal E})$ satisfying the following: 
$D$ is a smooth curve, and the composite $D\to\PP({\mathcal E})\to C$ induces the $k$-linear
Frobenius morphism $D\to D^{(p)}\iso C$. 
Let $X$ be the smooth ruled $d$-fold $\pi:\PP({\mathcal E}\oplus \OO_C^{d-2})\to C$. 
The projection ${\mathcal E}\oplus\OO_C^{d-2}\to {\mathcal E}$ onto the first summand
induces an embedding of $\PP({\mathcal E})$ into $X$ over $C$. We let $f:Y\to X$ be the blow-up along $D$.

We first show that $X$ lifts projectively to $W(k)$. 
Since $C$ is a projective smooth curve, it lifts projectively to some $\widetilde{C}$ over $W(k)$. 
We have $H^2(C,{\mathcal E}nd({\mathcal E}))=0$ for dimensional reasons, and thus, ${\cal E}$ lifts to
some $\widetilde{{\cal E}}$ on 
$\widetilde{C}$,  see \cite[Theorem 8.5.3]{GET}. 
In particular, $\PP(\widetilde{{\cal E}})\to\widetilde{C}$ defines a projective lift of $X$ to $W(k)$.

Next, we show that $Y$ does not lift to $W_2(k)$. If it lifts, then by Proposition \ref{higher lifting prop}(3), we obtain a lift $X'$ of $X$ to $W_2(k)$ and a lift $D'\subset X'$ of $D\subset X$. 
Since $\pi:X\to C$ is a projective bundle, we have $R\pi_\ast\OO_X=\OO_C$, and thus, $X'$ induces a lift $C'$ of $C$ to $W_2(k)$ by
Proposition \ref{burns-wahl}.
The composite $D'\to X'\to C'$ is then a lift of Frobenius to $W_2(k)$, which is impossible by \cite[Lemma I.5.4]{raynaud2}.
This contradiction shows that $Y$ does not lift to $W_2(k)$.
\end{proof}

Finally, we show that there exist $3$-folds with ordinary double points
that lift to $W(k)$, but where small resolutions of singularities do not even lift to $W_2(k)$. 
We recall that the ordinary $3$-dimensional double point is defined to be
$$
 k[[x,y,z,w]]/(xy-zw).
$$
In every characteristic, this singularity is normal, Gorenstein, and blowing up the singular point we obtain a resolution with exceptional 
locus $\PP^1\times\PP^1$.
Contracting one of the two factors of $\PP^1\times\PP^1$, we obtain a small
resolution with exceptional locus $\PP^1$ and normal bundle
$\OO_{\PP^1}(-1)\oplus\OO_{\PP^1}(-1)$, see \cite[\S4]{CvS}.
By definition, the induced birational rational map between these two small resolutions is the
Atiyah flop.
 

\begin{Proposition}
 Let $X$ be a $3$-dimensional variety with one 
 singular point that is an ordinary double point. Let $Y_i\to X$ for $i=1,2$ be the two small resolutions of the singularities described above. Then $Y_1$ lifts formally to $A$ if and only if $Y_2$ does.
\end{Proposition}

\begin{proof}
Let $f_i:Y_i\to X$, $i=1,2$ be the contraction morphisms of the respective
flopping curves.
By assumption, $X$ has an ordinary double point.
A formal lift $Y'_1$ of $Y_1$ to $A$  induces a formal
lift $X'$ of $X$ to $A$ by Proposition \ref{burns-wahl} and \cite[Theorem 4.1]{CvS}.
The induced lift to $A$ of the ordinary double point of $X$ 
is determined in \cite[page 237]{CvS} and in particular,
$Y_1'\to X'$ is the blow-up in a singular section.
By blowing up the other singular section that comes with this
particular lift of the ordinary double point to $A$
(see \cite[p. 237]{CvS} and \cite[Proposition 4.2]{CvS}), we obtain a lift of
$Y_2$ to $A$.
\end{proof}

Whereas the lifting behavior does not change under Atiyah flops, it 
may change under small resolutions of $3$-fold ordinary double points, 
as the following examples show.

\begin{Theorem}[Cynk--van Straten, Schoen, $+\varepsilon$]
\label{thm:main3-higher-dim}
 For every prime
 $$
   p\,\in\,\{3,5,7,11,17,29,41,73,251,919,9001\}
 $$
 there exists a projective Calabi--Yau $3$-fold $X$ over $k=\FF_p$ with only ordinary double points as singularities with the following properties:
 \begin{enumerate}
  \item $X$ lifts projectively to $W(k)$,
  \item there exist small resolutions of singularities $Y\to X$ in the category of algebraic spaces, but none of them lifts to $W_2(k)$ or 
  formally to a ramified extension of $W(k)$,
  \item there exist projective resolutions of singularities $Z\to X$ that neither lift to $W_2(k)$ nor formally to a ramified extension of $W(k)$.
 \end{enumerate}
\end{Theorem}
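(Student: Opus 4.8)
The variety $X$ is a Schoen-type fibre product of rational elliptic surfaces, and, apart from the precise list of primes and the assertions about ramified extensions (the ``$\varepsilon$''), the construction and its properties go back to Schoen and to Cynk--van~Straten \cite{CvS}. For a prime $p$ on the list I would first choose rational elliptic surfaces with section $S_1,S_2\to\PP^1$ over $\ZZ_{(p)}$ (after a harmless unramified base change if necessary) whose bad fibres are all nodal, and whose twelve-plus-twelve critical values in $\PP^1$ are pairwise distinct over $\QQ$ but have exactly $N\geq 1$ coincidences over $\FF_p$; the arithmetic of ``two $\ZZ_{(p)}$-points of $\PP^1$, distinct over $\QQ$ but congruent mod $p$'' is what limits the admissible $p$, and I would pin down the list by exhibiting explicit Weierstrass families, extending the search of Cynk--van~Straten. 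Put $X:=S_{1,\FF_p}\times_{\PP^1}S_{2,\FF_p}$. A local computation at a coincidence point shows that there $X$ acquires an ordinary three-dimensional double point and is smooth elsewhere, so $X$ has exactly $N$ nodes; picking a local small resolution at each node and gluing yields small resolutions $Y\to X$ in algebraic spaces, while blowing up a suitable global Weil divisor yields projective crepant resolutions $Z\to X$ (item (3)). In all cases $\omega=\OO$ and $h^1(\OO)=h^2(\OO)=0$, so these are Calabi--Yau $3$-folds. For item (1): a rational elliptic surface lifts projectively to $W(k)$ by blowing up a lift of its defining pencil of cubics (Proposition \ref{higher lifting prop}(2)), or because $H^2(\Theta)=0$; lifting $S_1,S_2$ compatibly over a lift of $\PP^1$ and forming the fibre product gives a flat lift $\X$ of $X$ over $W(k)$, and since $H^2(\OO_X)=0$ this is algebraisable and carries a lift of an ample sheaf, hence is projective.

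Now suppose a resolution $g:V\to X$ — with $V=Y$ small, or $V=Z$ projective — lifts to $V'$ over $A$, where $A$ is $W_2(k)$ or a ramified extension $R$ of $W(k)$. Its exceptional locus is unobstructed ($\OO_{\PP^1}(-1)^{\oplus 2}$ has vanishing $H^1$ in the small case, and the exceptional divisors of $Z$ lift likewise), so it lifts to $V'$, and blowing it down via Propositions \ref{burns-wahl} and \ref{higher lifting prop}(3) produces a flat lift $\X_A$ of $X$ over $A$ together with a simultaneous resolution $V'\to\X_A$. In particular $\X_A$ admits, at each of its $N$ nodes, a local resolution compatible with a small one upstairs; a local computation — blowing up the two rulings of the node and inspecting the strict transforms — shows that this is a genuine restriction on the lift of the node, failing for instance for $u_1v_1-u_2v_2=\varpi c$ with $c$ a unit and $\varpi$ a uniformiser of $A$.

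It remains to derive a contradiction. If $A=R$ is ramified, then $H^2(\OO_V)=0$ makes $V'$ algebraisable and projective, so its generic fibre $\overline V$ is a smooth projective Calabi--Yau $3$-fold over the characteristic-zero field $\mathrm{Frac}(R)$, with $b_i(\overline V)=b_i(V)$ by smooth--proper base change, and $\overline V$ is a small resolution of an $N$-nodal Calabi--Yau $3$-fold over $\mathrm{Frac}(R)$. Following Cynk--van~Straten one contradicts this by confronting the Hodge and Betti data that the picture imposes on $\overline V$ — via the defect and conifold-transition relations, together with the rigidity of the characteristic-zero small resolution of the ``disjoint'' fibre product — with the invariants $b_i(V)$ read off directly from the mod-$p$ geometry. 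If instead $A=W_2(k)$, the same contradiction is obtained by a deformation-theoretic computation in the spirit of \S\ref{sec:duval}: the deformation-comparison sequence of \cite{CvS} relating $X$ and $V$ identifies the obstruction to lifting $V$ to $W_2(k)$ with a class supported at the $N$ nodes, and one shows it is non-zero — equivalently, that no lift of $X$ to $W_2(k)$ is small-resolvable at all $N$ nodes at once, even though $X$ does admit a (non-small-resolvable) lift by item (1).

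The routine part is items (1) and the reductions above; the crux is this final obstruction step. Because one must exclude \emph{every} small resolution (there are $2^N$ local choices of ruling) and \emph{every} ramified extension simultaneously, the argument cannot fix a model and must be run on the intrinsic invariants of the hypothetical lift, which forces one to compute the defect of the $N$-nodal configuration precisely enough to make the numerical, respectively cohomological, contradiction visible. Verifying that the listed primes — and essentially only these — support the required configurations of rational elliptic surfaces over $\ZZ_{(p)}$ is the computational heart of the ``$+\varepsilon$''.
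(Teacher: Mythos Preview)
Your overall picture---Schoen-type fibre products, blow down a hypothetical lift, analyse the node locally---is right, but the argument as written has a real gap at the crux, and the setup you describe would not support the paper's method.

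The paper does \emph{not} take generic rational elliptic surfaces with twelve $I_1$ fibres each. It uses the Beauville-type surfaces with exactly four $I_1$ fibres, positioned over $\{0,1,\lambda,\infty\}$ and $\{0,1,\mu,\infty\}$ with $\lambda\equiv\mu\bmod p$ but $\lambda\not\equiv\mu\bmod p^2$. So the generic fibre of $\mathcal X$ over $W(k)$ already has three nodes (over $0,1,\infty$), and the special fibre $X$ has exactly one \emph{extra} node. One first resolves the three persistent nodes over $W(k)$ to get $\overline{\mathcal X}$, whose special fibre $\overline X$ has a single node; $Y\to\overline X$ is a small resolution of that one node. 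The point of this very particular choice is that $Y$ is \emph{rigid} (this is the content of \cite[\S6]{CvS}), and hence so is $\overline X$. Rigidity is the entire engine of the non-lifting arguments, and it is absent from your proposal; with a generic twelve-fibre configuration the resulting Calabi--Yau is typically far from rigid and the argument below collapses.

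For the $W_2(k)$ claim, your ``one shows it is non-zero'' is exactly where the content lies, and the mechanism is not a defect computation but the following: if $Y$ lifted to $Y'$ over $W_2(k)$, blowing down gives a lift $\overline X'$ of $\overline X$ in which the node lifts to $xy-zw$ (this is your correct local observation). But rigidity of $\overline X$ forces $\overline X'\cong\overline{\mathcal X}\otimes_{W(k)}W_2(k)$, and in \emph{that} model a direct local computation---using precisely $\lambda\not\equiv\mu\bmod p^2$---shows the node lifts as $xy-zw-p$. These two local forms are inequivalent, contradiction. Without rigidity you cannot identify $\overline X'$ with anything, and without the $\bmod\,p^2$ condition the local contradiction evaporates; neither ingredient appears in your sketch. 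The ramified case is likewise dispatched by citing rigidity (via \cite[Theorem 4.3]{CvS}) for $Y$, and Schoen's vanishing $\Het{3}(Z,\QQ_\ell)=0$ together with \cite[Proposition 11.1]{schoen} for $Z$, rather than a Hodge--Betti comparison. Finally, the list of primes is not produced by a new search: it is taken verbatim from \cite[\S6.2]{CvS}, where pairs $(\lambda,\mu)$ with the required congruence properties for the four-fibre surfaces are tabulated.
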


\begin{proof}
Our examples arise as fiber products of rational elliptic surfaces and their desingularizations. 
Rational elliptic surfaces that are semi-stable as elliptic fibrations over $\PP^1$ with precisely $4$ singular fibers were classified in \cite{beauville}, and we refer to \cite[\S4]{schoen} for a characteristic-free classification.
As shown in \cite[\S6.2]{CvS}, we may 
find for all $p$ as in the statement of the theorem
two rational elliptic surfaces ${\cal S}_i\to\PP^1$, $i=1,2$ over $W(k)$,
${\rm char}(k)=p$, whose elliptic fibrations are semi-stable, and
whose $4$ singular fibers 
lie over $\{0,1,\lambda,\infty\}$ and $\{0,1,\mu,\infty\}$,
respectively.
It is further shown that there are examples where the fibers over $\lambda$ and $\mu$
are of type $I_1$, that $\lambda\neq\mu$, and that $\lambda\equiv\mu\mod p$.
Moreover, using the explicit equations of \cite[Table 1]{schoen}, 
we may assume  $\lambda\not\equiv\mu\mod p^2$.
The fiber product 
${\cal X}:={\cal S}_1\times_{\PP^1}{\cal S}_2$ is 
projective of relative dimension $3$ over $W(k)$.
As explained in \cite[\S6.2]{CvS}, the singularities of the generic fiber ${\cal X}_\eta$ are 
ordinary double points lying over $\{0,1,\infty\}\subset\PP^1$,
whereas the special fiber $X:={\cal X}_k$ has an extra double point lying
over $\lambda\mod p$, which, by assumption, is also equal to $\mu\mod p$.

By blowing up the reduced singular locus of $X$, we obtain a projective 
resolution of singularities $Z\to X$.
Since $\Het{3}(Z,\QQ_{\ell})=0$ for every prime $\ell\neq p$
by \cite[Cororollary 3.2]{schoen}, it follows 
from \cite[Proposition 11.1]{schoen} that $Z$ does not admit a lift even
to a ramified extension of $W(k)$.

By \cite[Lemma 3.1]{schoen2} or \cite[\S6.2]{CvS},
there exists a small resolution $\overline{{\cal X}}\to {\cal X}$
of the $3$ double points lying over $\{0,1,\infty\}$ in the category of
algebraic spaces.
The reduction $\overline{X}$ of $\overline{{\cal X}}$ modulo $p$ 
has precisely one double point and thus, is
a partial resolution of singularities of $X$.
Let $Y\to \overline{X}$ be a small resolution of the remaining double point,
still in the category of algebraic spaces.
Since $Y$ is rigid \cite[Proposition 6.3]{CvS}, and $\overline{X}$ has precisely
one double point, it follows from \cite[Remark 4.5]{CvS} that $\overline{X}$ is also rigid.
Thus, $Y$ does not lift to a ramified extension of $W(k)$
by \cite[Theorem 4.3]{CvS}
(although this result is stated for schemes, it also holds
for algebraic spaces, see the discussion on \cite[page 242]{CvS}).

We claim that neither $Z$ nor $Y$ lifts to 
$W_2(k)$:
since there exists a dominant birational morphism $g:Z\to Y$ that satisfies $Rg_\ast\OO_Z=\OO_Y$ by \cite[Corollary 3.2.4]{rulling}, 
it suffices to show that $Y$ does not lift to $W_2(k)$ by Proposition \ref{burns-wahl}.
Thus, assume to the contrary that $Y$ lifts to some $Y'$ over$W_2(k)$. 
This lift blows down to a lift $\overline{X}'$ of $\overline{X}$ by \cite[Theorem 4.1]{CvS} and Proposition \ref{burns-wahl}. 
By \cite[page 237]{CvS}, the induced lift of the double point to $W_2(k)$ is analytically equivalent to
$$
W_2(k)[[x,y,z,w]]/(xy-zw).
$$
On the other hand, the elliptic fibration ${\cal S}_1\to\PP^1$, is given
formally locally over $\{\lambda\}\in\PP^1$ by
$$
\Spf W(k)[[x,y,t]]/(y^2-x^3-x^2-t)\,\to\,\Spf W(k)[[t]],
$$
and similarly for ${\cal S}_2\to\PP^1$ over $\{\mu\}$.
Thus, their fiber product $\overline{{\cal X}}\to\PP^1$ is locally formally
over $\{\lambda\}$ given by
$$
\Spf W(k)[[x,y,u,w,t ]]/( y^2-x^3-x^2-t, w^2-u^3-u^2-(t+\lambda-\mu) )\to\Spf W(k)[[t]].
$$
After eliminating $t$, we see that the ordinary double point of $\overline{X}$ 
deforms in this particular lift to $W(k)$ as
$$
\Spf W(k)[[x,y,u,w]]/( (y^2-x^2-x^3)-(w^2-u^2-u^3) + (\lambda-\mu)).
$$
By rigidity of $\overline{X}$, the lift $\overline{X}'$ is isomorphic 
to $\overline{{\cal X}}\otimes_{W(k)}W_2(k)$.
Since $\lambda\equiv\mu\mod p$ and $\lambda\not\equiv\mu\mod p^2$, we see that
the induced lift of the double point  of $\overline{X}$ to $W_2(k)$
is analytically equivalent to
$$
W_2(k)[[x,y,z,w]]/(xy-zw-p),
$$
a contradiction (see \cite[Remark 5.3]{CvS} for a similar argument).
Thus, $Y$ does not lift to $W_2(k)$.
\end{proof}

\section{On the birational nature of lifting for surfaces}
\label{sec:dimension two}

In this section, we show that smooth and birational surfaces have the same lifting behavior, as announced in
Theorem \ref{thm:main-surfaces}.  We begin with Propositions \ref{smooth surfaces} and \ref{best answer}, which give the well-known positive results of the theorem.

\begin{Proposition}
 \label{smooth surfaces}
 Let $A$ be a complete Noetherian local ring with perfect residue field $k$. 
 Let $X$ and $Y$ be smooth proper birational surfaces over $k$. 
 Then $X$ lifts formally to $A$ if and only if $Y$ does.
\end{Proposition}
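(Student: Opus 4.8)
The plan is to reduce the statement to the two elementary facts about blow-ups of points recorded in Proposition~\ref{higher lifting prop}. First I would use that a smooth proper surface over a field is automatically projective, so the given birational equivalence is a birational map between smooth projective surfaces. By the classical two-dimensional factorization theory (elimination of indeterminacy by successively blowing up the finitely many base points, which in a smooth surface are smooth centers), there is a smooth projective surface $Z$ together with birational morphisms $g\colon Z\to X$ and $h\colon Z\to Y$ compatible with the birational identification, and moreover each of $g$ and $h$ is a finite composition of blow-ups of closed points, with every intermediate surface again smooth and proper.

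Granting this, the argument is purely formal. Suppose $X$ lifts to $A$. Write $g=g_1\circ\cdots\circ g_r$ with $g_i\colon S_i\to S_{i-1}$ the blow-up of a closed point of the smooth proper surface $S_{i-1}$, where $S_0=X$ and $S_r=Z$. Repeated application of Proposition~\ref{higher lifting prop}(2) then gives, step by step, that $S_1$ lifts to $A$, then $S_2$, and finally $Z=S_r$ lifts to $A$; at each stage the source is again a smooth proper surface, so the hypotheses of Proposition~\ref{higher lifting prop} remain in force. Now Proposition~\ref{higher lifting prop}(1) applied to the birational morphism $h\colon Z\to Y$ yields a lift of $Y$ to $A$. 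Exchanging the roles of $X$ and $Y$ (running the tower $h$ upward and then applying part (1) to $g$) gives the converse implication, hence the asserted equivalence.

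I do not expect a genuine obstacle here: once Proposition~\ref{higher lifting prop} is available the proof is bookkeeping. The only non-routine ingredient is the existence of the common smooth model $Z$ dominating both $X$ and $Y$ through towers of point blow-ups, i.e.\ the classical factorization of birational maps of smooth surfaces; this is precisely the feature special to dimension two that fails for the higher-dimensional counter-examples of \S\ref{sec:higher dimension}. (One may equally take $Z$ to be a resolution of the closure of the graph of the birational map in $X\times_k Y$: the resulting projections to $X$ and $Y$ are birational morphisms of smooth proper surfaces, hence again compositions of point blow-ups, and the same argument applies.)
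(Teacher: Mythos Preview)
Your proof is correct and follows essentially the same approach as the paper: both reduce via the factorization theorem for birational maps of smooth surfaces to the case of a single blow-up at a closed point, then invoke Proposition~\ref{higher lifting prop}(2) to go up and Proposition~\ref{higher lifting prop}(1) (equivalently, Proposition~\ref{burns-wahl}) to go down. The only cosmetic difference is that the paper cites Proposition~\ref{burns-wahl} directly for the blow-down step rather than its corollary Proposition~\ref{higher lifting prop}(1).
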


\begin{proof}
From the structure result of birational maps, it follows that there exists a
smooth surface $Z$ over $k$, and proper birational morphisms $Z\to X$ and $Z\to Y$.  
Moreover, these proper birational morphisms can be factored into sequences of blow-ups at closed points. 
Thus, it suffices to treat the case where $f:Y\to X$ is the blow-up at a closed point. In this situation, if $Y$ lifts to $A$, then so does $X$ by Proposition \ref{burns-wahl}. Conversely, if $X$ lifts to $A$, then so does $Y$ by Proposition \ref{higher lifting prop}(2).
\end{proof}

\begin{Proposition}
 \label{best answer}
 Let $A$ be a complete Noetherian local ring with residue field $k$. Let $X$ and $Y$ be proper birational surfaces over $k$ with $Y$ smooth and $X$ at worst rational singularities. If $Y$ lifts formally to $A$, then $X$ does as well.
\end{Proposition}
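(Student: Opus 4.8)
The plan is to reduce the statement to Proposition \ref{burns-wahl} by passing through a smooth resolution of $X$ and invoking Proposition \ref{smooth surfaces}. First, since $X$ has at worst rational singularities, it is in particular normal, so by the classical resolution of singularities for normal surfaces (Lipman, Abhyankar) there is a resolution $g\colon \widetilde{X}\to X$ with $\widetilde{X}$ a smooth proper surface; one may take $g$ to be the minimal resolution, though minimality is not needed. The defining property of rational singularities, together with normality of $X$, gives $g_*\OO_{\widetilde{X}}=\OO_X$ and $R^ig_*\OO_{\widetilde{X}}=0$ for $i>0$, that is, $Rg_*\OO_{\widetilde{X}}=\OO_X$.

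Next, $\widetilde{X}$ is a smooth proper surface birational to $X$, hence birational to $Y$. Since $Y$ is also smooth and proper, Proposition \ref{smooth surfaces} shows that $Y$ lifts to $A$ if and only if $\widetilde{X}$ does. By hypothesis $Y$ lifts to $A$, so $\widetilde{X}$ lifts to $A$.

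Finally, I would apply Proposition \ref{burns-wahl} to the morphism $g\colon \widetilde{X}\to X$: since $Rg_*\OO_{\widetilde{X}}=\OO_X$ and $\widetilde{X}$ lifts to $A$, we conclude that $X$ lifts to $A$. Explicitly, if $\widetilde{X}'$ is a lift of $\widetilde{X}$ to $A$, then the topological space of $X$ equipped with $g_*\OO_{\widetilde{X}'}$ is the desired lift of $X$.

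The only points requiring care — neither of which is a genuine obstacle — are the inputs from classical surface theory (existence of a resolution in arbitrary characteristic, and the structure of birational morphisms of smooth surfaces underlying Proposition \ref{smooth surfaces}) and the standard fact that rational singularities of a normal variety satisfy $Rg_*\OO_{\widetilde{X}}=\OO_X$ for any resolution $g$. The substance of the argument is already contained in the previously established Propositions \ref{burns-wahl} and \ref{smooth surfaces}.
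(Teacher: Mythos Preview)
Your proof is correct and follows essentially the same approach as the paper's: take a resolution of singularities $Z\to X$, use the rational singularities hypothesis to obtain $Rf_*\OO_Z=\OO_X$, transfer the lift from $Y$ to $Z$ via Proposition~\ref{smooth surfaces}, and then blow down using Proposition~\ref{burns-wahl}. The paper's version is simply a terser presentation of the same argument.
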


\begin{proof}
Let $f:Z\to X$ be a resolution of singularities. By assumption, it satisfies $Rf_\ast\OO_Z=\OO_X$. Since $Y$ lifts to $A$, so does $Z$ by Proposition \ref{smooth surfaces}. Hence, $X$ lifts to $A$ by Proposition \ref{burns-wahl}.
\end{proof}

\begin{Remark}
 As stated above, Propositions \ref{smooth surfaces} and \ref{best answer} are known.  For a deformation theoretic proof of the former, see \cite[Proposition 1.2.2]{suh}.
\end{Remark}

Lastly, we construct the counter-examples of Theorem \ref{thm:main-surfaces}. In fact, every sufficiently general purely inseparable cover of degree $p$ of $\PP^2$ gives an example, and we thank Torsten Ekedahl for pointing this out to us.

\begin{Theorem}
\label{thm:main2-surfaces}
 For every algebraically closed field $k$ of characteristic $p\geq7$, there exists 
 \begin{enumerate}
  \item a surface $X$ with canonical singularities that lifts projectively to $W(k)$, whereas
  \item no smooth model of $X$ lifts to $W_2(k)$.
 \end{enumerate}
\end{Theorem}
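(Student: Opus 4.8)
The plan is to reduce, by Proposition~\ref{smooth surfaces}, to constructing a single proper surface $X$ with at worst canonical singularities which lifts projectively to $W(k)$ but whose minimal resolution $f\colon Y\to X$ does not lift to $W_2(k)$: since every smooth model of $X$ is birational to $Y$, Proposition~\ref{smooth surfaces} then shows that no smooth model of $X$ lifts to $W_2(k)$. To build such an $X$ I would adapt the mechanism behind Raynaud's counterexamples to Kodaira vanishing \cite[\S2]{raynaud}, which already drives the proof of Theorem~\ref{thm:main2-higher-dim}. Starting from a smooth projective curve $C$ of genus $\geq 2$ over $k$, a rank-$2$ bundle $\mathcal{E}$ on $C$, the ruled surface $P=\PP(\mathcal{E})$, and the Frobenius curve $D\subset P$ with $D^{(p)}=C$ and $D\to P\to C$ the relative Frobenius, I would take $X$ to be the degree-$p$ cyclic cover $\rho\colon X\to P$ defined by a line bundle $\mathcal{M}$ on $P$ and a section $s\in H^{0}(P,\mathcal{M}^{\otimes p})$ with $\mathcal{M}^{\otimes p}\cong\OO_{P}(\divisor s)$ and $\divisor(s)$ a reduced divisor assembled from $D$ and suitable auxiliary horizontal and vertical divisors; locally $X$ is the hypersurface $t^{p}=s$. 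The bundle $\mathcal{M}$ and the auxiliary divisors are to be chosen so that $X$ --- a local complete intersection, hence Cohen--Macaulay, and regular in codimension~$1$ because $\divisor(s)$ is smooth away from finitely many controlled double points, hence normal --- has only rational double points, while a minimal resolution $Y$ of $X$ is one of Raynaud's surfaces.

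To see that $X$ lifts projectively to $W(k)$: the curve $C$ lifts to a smooth projective $\widetilde C$ over $W(k)$; since $H^{2}(C,\mathcal{E}nd(\mathcal{E}))=0$ the bundle $\mathcal{E}$ lifts, so $P$ lifts to a projective $\widetilde P$ over $W(k)$; since $H^{2}(P,\OO_{P})=0$ the line bundle $\mathcal{M}$ lifts to $\widetilde{\mathcal{M}}$ over $\widetilde P$; and, after a positivity assumption on the construction data ensuring $H^{1}(P,\mathcal{M}^{\otimes p})=0$, the section $s$ lifts to a section $\widetilde s$ of $\widetilde{\mathcal{M}}^{\otimes p}$. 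The cyclic cover of $\widetilde P$ defined by $\widetilde{\mathcal{M}}$ and $\widetilde s$, locally $t^{p}=\widetilde s$, is finite over $\widetilde P$, hence projective over $W(k)$, and flat over $W(k)$ because it is cut out by a section reducing to the nonzerodivisor $t^{p}-s$ on $X$; thus it is a projective lift of $X$. (Its generic fibre need not carry rational double points: the cover is inseparable in characteristic~$p$ but separable in characteristic~$0$. This is why a minimal resolution of $X$ lifts only after a ramified base change, consistently with Artin's theorem \cite{artin simult res} and the positive answer to Question~\ref{question2} for surfaces.)

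To see that $Y$ does not lift to $W_2(k)$: $Y$ is a smooth projective surface violating Kodaira--Akizuki--Nakano vanishing, that is, there is an ample line bundle $L$ on $Y$ with $H^{1}(Y,\Omega^{2}_{Y}\otimes L)\neq0$; this is the characteristic feature of Raynaud's surfaces, and it is extracted from the Frobenius structure of $D\subset P$ by computing the cohomology of $Y$ through the composite $Y\to X\to P$. Since $p\geq 3$ we have $\dim Y=2<p$, so by the theorem of Deligne--Illusie \cite{deligne illusie} a lift of $Y$ to $W_2(k)$ would force Kodaira--Akizuki--Nakano vanishing on $Y$; hence $Y$ does not lift to $W_2(k)$, and by Proposition~\ref{smooth surfaces} no smooth model of $X$ does. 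Combined with the liftability of $X$ established above, this completes the proof.

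The main obstacle is the first step: engineering the cyclic cover $\rho\colon X\to P$ so that its total space is normal with \emph{only} rational double points --- no worse singularities, and no non-reduced or extraneous components over the fibres of $P\to C$ --- while simultaneously preserving enough of the Frobenius geometry of $D$ for a minimal resolution $Y$ to violate Kodaira vanishing. This is a delicate balancing of the ramification divisor $\divisor(s)$ against the ruling of $P$ and against $D$, and it is where essentially all of the work lies; once the construction is in hand, the liftability of $X$ is formal because the cover is defined over $W(k)$, and the non-liftability of $Y$ follows from Deligne--Illusie.
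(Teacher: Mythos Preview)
Your proposal follows a genuinely different path from the paper's proof. Both arguments ultimately appeal to Deligne--Illusie to obstruct lifting of the smooth model, but the constructions of $X$ and the pathologies invoked are different. The paper does not build a Raynaud-type cyclic cover of a ruled surface at all; instead it takes $X$ directly from Hirokado's list \cite[Example 3.6]{hirokado} of Zariski surfaces: each such $X$ is a purely inseparable degree-$p$ cover of $\PP^2_k$ of the form $z^p=f(x_0,x_1,x_2)$ with canonical singularities, and its projective liftability to $W(k)$ is immediate by lifting the polynomial $f$. For non-liftability of the resolution, the paper cites Hirokado's result \cite[Theorem 3.1]{hirokado} that the minimal resolution carries a non-closed regular $1$-form, so the Fr\"olicher spectral sequence does not degenerate at $E_1$, and then invokes \cite[Corollaire 2.4]{deligne illusie}. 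Your argument instead aims for failure of Kodaira--Akizuki--Nakano vanishing on $Y$ and then \cite{deligne illusie} in its vanishing-theorem incarnation. The paper's route is shorter because the required examples, including the singularity analysis and the pathology on the resolution, already exist in the literature.

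That said, your proposal has a genuine gap, which you yourself flag: the first step is only a plan, not a construction. You would need to exhibit a section $s\in H^0(P,\mathcal{M}^{\otimes p})$ whose zero locus contains $D$, is reduced, and has only the mildest possible crossings so that the inseparable cover $t^p=s$ acquires nothing worse than rational double points, while at the same time the minimal resolution $Y$ is a Raynaud surface in the sense that it violates Kodaira vanishing. Nothing in Raynaud's original paper provides such an $X$: his surfaces are not presented as resolutions of canonical-singularity covers of $P$, and arranging this simultaneously with the Frobenius-curve geometry is real work that you have not done. Until that construction is carried out and the singularity types verified, the proof is incomplete. By contrast, the paper avoids this difficulty entirely by citing Hirokado, where the singularity analysis and the obstruction on the resolution are already established.
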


\begin{proof}
Let $n\geq1$ be an integer and let $s$ be a generic section of $H^0(\PP_k^2,{\cal L}^{\otimes p})$, where ${\cal L}:=\OO_{\PP^2}(n)$. Then, the obvious multiplication ${\cal L}^{\otimes(-i)}\oplus{\cal L}^{\otimes(-j)}\to{\cal L}^{\otimes(-i-j)}$ and multiplication by $s:{\cal L}^{\otimes(-i-j)}\to{\cal L}^{\otimes(-i-j+p)}$ turn ${\cal A}:=\bigoplus_{i=0}^{p-1} {\cal L}^{\otimes(-i)}$ into an $\OO_{\PP^2_k}$-algebra. We let $X=\BigSpec{\cal A}$ and $f:X\to \PP^2_k$ be the structure morphism. Since $s$ is generic, $X$ is a surface with at worst canonical singularities of type $A_{p-1}$, see \cite[page 105]{Gauge} or \cite[Theorem 3.4]{lie}. Via lifting ${\cal L}$ and $s$ to $\PP^2_{W(k)}$, we obtain a lift of the whole cover $f:X\to\PP^2_k$ to $W(k)$. In particular, $X$ admits a projective lift to $W(k)$.

Next, let $\widetilde{X}\to X$ be a resolution of singularities.
By \cite[Chapter VI.xiv]{Gauge}, we have
$$
h^0(\widetilde{X},\Omega_{\widetilde{X}}^1)\,\geq\,
\frac{p\left[(p-1)(p-2)-3p\right]}{6}n^2\,-\,\frac{p(p+5)}{4}n\,-\,\frac{3p^2-7p}{4}\,+\,1.
$$
Since we assumed $p\geq7$, there will be non-zero $1$-forms on $\widetilde{X}$ if $n$ is sufficiently large. By \cite[Chapter VI.xiii]{Gauge}, none of these $1$-forms is $d$-closed. 
In particular, there is a non-trivial differential in 
the Fr\"olicher spectral sequence from Hodge-to-de Rham cohomology 
on the $E_1$ page.
However, if  $\widetilde{X}$ were to lift to $W_2(k)$
then its Fr\"olicher spectral sequence would degenerate at
$E_1$ by \cite[Corollaire 2.4]{deligne illusie}, a contradiction.
\end{proof}

\begin{Remark}
While no desingularization of $X$ lifts to $W_2(k)$, it follows from Artin's result
\cite[Theorem 3]{artin simult res} 
that every smooth model lifts formally to a ramified extension of $W(k)$.
\end{Remark}

\section{Canonical Surface Singularities}
\label{sec:duval}

In this section we study the relationship between the deformation functor of an isolated 
canonical surface singularity 
and the deformation functor of its minimal resolution.  
This generalizes many of the results of Burns--Wahl \cite[\S1--2]{bw} to positive characteristic, 
and supplements the analysis of Wahl \cite{wahl}.

In \S\ref{subsec:linred}, we introduce the definition of linearly reductive quotient singularities and give a complete characterization of canonical surface singularities that are of this form. In \S\ref{subsec:defthy} we turn to the study of deformation functors.


\subsection{Linearly reductive quotient singularities}
\label{subsec:linred}

Over the complex numbers and in dimension $2$, rational double points are also known as
canonical singularities, Du~Val singularities, 
ADE singularities, or Kleinian singularities, and they coincide with the class of 
rational Gorenstein singularities. 
Moreover, these singularities are precisely those which are analytically 
quotients by finite subgroups of ${\rm SL}_2(\CC)$,  and we refer to \cite{durfee} for an overview.

In positive characteristic, it is no longer true that every canonical surface singularity is a quotient of a smooth surface by a finite group, see Remark \ref{linearly reductive remark} below. However, we show in Proposition \ref{canonical linearly reductive} that most canonical surface singularities are examples of the following type of singularity:

\begin{Definition}
 \label{def:linearly reductive}
  A scheme over a field $k$ has \emph{linearly reductive quotient singularities} (resp.~ \emph{tame quotient singularities}) if it is \'etale locally isomorphic to the quotient of a smooth $k$-scheme by a finite linearly reductive group scheme (resp.~ finite \'etale group scheme of order prime to the characteristic of $k$).
\end{Definition}

Note that tame quotient singularities are examples of linearly reductive quotient singularities. 
Although these two classes of singularities differ in positive characteristic, they agree in characteristic 0
since finite linearly reductive group schemes in characteristic 0 are all locally constant.

We recall from \cite[Theorem 2.7]{artin numerical} that in any characteristic,
the dual resolution graph for the minimal resolution of a canonical surface
singularity over an algebraically closed field is a Dynkin diagram of type $A$, $D$, or $E$.

\begin{Proposition}
 \label{canonical linearly reductive}
 Let $k$ be an algebraically closed  field of characteristic $p$. The following table summarizes when canonical
 surface singularities over $k$ are linearly reductive quotient singularities (resp. ~tame quotient singularities):
 $$
   \begin{array}{ccc}
    &\mbox{ \underline{linearly reductive quotient singularity} }&\mbox{ \underline{tame quotient singularity} }\\
    A_{n-1} & \textrm{every\ } p & p\!\!\not|n\\
    D_{n+2} & p\geq3 & p\geq3,\, p\!\!\not|n\\
    E_6 & p\geq5 & p\geq5\\
    E_7 & p\geq5 & p\geq5\\
    E_8 & p\geq7 & p\geq7\\
   \end{array}
 $$
In particular, if $p\geq 7$ every canonical surface singularity over $k$ is a linearly reductive quotient singularity.
\end{Proposition}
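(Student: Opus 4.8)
The plan is to run the classical McKay correspondence ``in families over $\ZZ$'', treating separately the diagonalizable direction of the acting group scheme and the prime-to-$p$ direction. Recall first the structure theory of finite linearly reductive group schemes over an algebraically closed field $k$ of characteristic $p$: such a group scheme $G$ fits in an extension $1\to\mu\to G\to H\to 1$ with $\mu$ diagonalizable and $H$ a constant finite group of order prime to $p$, and $G$ is \'etale (equivalently, quotients by $G$ are tame quotient singularities) precisely when $\mu$ is \'etale. Since a quotient $\Aff_k^2/G$ by a finite linearly reductive $G\subseteq{\rm GL}_{2,k}$ acting without quasi-reflections is a rational singularity, and by the group-scheme analogue of Watanabe's criterion is Gorenstein --- hence, being rational, canonical --- if and only if $G\subseteq{\rm SL}_{2,k}$, the proposition reduces to classifying the finite linearly reductive subgroup schemes $G\subseteq{\rm SL}_{2,k}$ together with the dual resolution graph of $\Aff_k^2/G$, and to recording which of these $G$ are \'etale.

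So let $G\subseteq{\rm SL}_{2,k}$ be finite linearly reductive and let $\mu$ be its maximal normal diagonalizable subgroup scheme. Being diagonalizable inside ${\rm SL}_2$ it lies in a maximal torus, so after a change of coordinates $\mu=\mu_m$ acts by $(x,y)\mapsto(\zeta x,\zeta^{-1}y)$; by \cite[\S2]{lee} the quotient $\Aff_k^2/\mu_m$ is the $A_{m-1}$ singularity $k[[u,v,w]]/(uv-w^m)$ in every characteristic, and this singularity, the quotient map $\Aff_k^2\to\Aff_k^2/\mu_m$, and the minimal resolution of $A_{m-1}$ are all toric, hence obtained by base change from $\ZZ$. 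The quotient $H:=G/\mu_m$ is a constant group of order prime to $p$ and $\Aff_k^2/G=(\Aff_k^2/\mu_m)/H$; since $|H|$ is invertible in $k$, forming this further quotient and an $H$-equivariant resolution of it commutes with the base change $\ZZ[1/|H|]\to k$, so the dual resolution graph of $\Aff_k^2/G$ coincides with that of the corresponding characteristic-$0$ quotient $\Aff^2/\widetilde G$, where $\widetilde G\subseteq{\rm SL}_2(\overline{\QQ})$ is the characteristic-$0$ lift of $G$. By the classical McKay correspondence the latter graph is of type $A$, $D$, or $E$ according as $\widetilde G$ is cyclic, binary dihedral, or binary polyhedral; writing $\widetilde G=(\text{maximal normal cyclic})\,.\,(\text{quotient})$, the only possibilities for $(m,H)$ are
$$ (n,1)\,[A_{n-1}],\ (2n,\ZZ/2)\,[D_{n+2}],\ (2,A_4)\,[E_6],\ (2,S_4)\,[E_7],\ (2,A_5)\,[E_8], $$
so $|H|$ equals $1$, $2$, $12$, $24$, $60$ respectively (note $m=1$ does not occur for the $E$-types, since $\{\pm 1\}$ is the unique nontrivial normal cyclic subgroup of the binary tetrahedral, octahedral, and icosahedral groups). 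In particular, a canonical singularity of type $D_{n+2}$, $E_6$, $E_7$, or $E_8$ is not a linearly reductive quotient singularity whenever $p$ divides the corresponding $|H|$, i.e.\ whenever $p=2$ for type $D$, $p\in\{2,3\}$ for $E_6$ and $E_7$, and $p\in\{2,3,5\}$ for $E_8$.

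Conversely, for every entry allowed by the table we exhibit the group scheme. For $A_{n-1}$ take $G=\mu_n$, which works in every characteristic and is \'etale exactly when $p\nmid n$. For $D_{n+2}$ with $p$ odd take the binary dihedral group scheme
$$ G_n\ :=\ \Bigl\langle\,\mu_{2n},\ \begin{pmatrix}0&1\\ -1&0\end{pmatrix}\,\Bigr\rangle\ \subseteq\ {\rm SL}_{2,k} $$
of order $4n$: its connected component is $\mu_{2n}^0$ and its \'etale quotient has order $4n/|\mu_{2n}^0|$, which is prime to $p$ because $p$ is odd, so $G_n$ is linearly reductive; a direct computation of $k[[u,v]]^{G_n}$ identifies $\Aff_k^2/G_n$ with the $D_{n+2}$ rational double point, and $G_n$ is \'etale exactly when $p\nmid 2n$, i.e.\ (as $p$ is odd) when $p\nmid n$. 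Finally, for $E_6$ and $E_7$ with $p\geq5$ and for $E_8$ with $p\geq7$, the relevant binary polyhedral group $\Gamma$ has order prime to $p$ (namely $24$, $48$, $120$), so $\Gamma$, its two-dimensional representation, and the associated invariant ring are all defined over $\ZZ[1/|\Gamma|]$; base change to $k$ then presents the $E_6$, $E_7$, resp.\ $E_8$ singularity as $\Aff_k^2/\Gamma$ with $\Gamma$ \'etale, so these singularities are simultaneously linearly reductive and tame quotient singularities, matching the last two columns. The concluding ``in particular'' is then immediate, since every $A$-type is a linearly reductive quotient singularity, every $D$-type is one as soon as $p\geq3$, and every $E$-type is one as soon as $p\geq7$.

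The heart of the argument --- and the step I expect to be the main obstacle --- is the comparison in the second paragraph asserting that the dual resolution graph of the (possibly wild) quotient $\Aff_k^2/G$ is computed by the characteristic-$0$ McKay correspondence. This requires knowing that the $A_{m-1}$ singularity and its minimal resolution are toric, hence insensitive to the characteristic; that a constant group of order prime to $p$ acts tamely, so that quotienting by $H$ and resolving commute with base change out of $\ZZ[1/|H|]$; and, in order to reduce to subgroup schemes of ${\rm SL}_2$ in the first place, the group-scheme form of Watanabe's Gorenstein criterion together with the rationality of linearly reductive quotient singularities. The only other nontrivial computation is the verification in the third paragraph that $G_n$ is linearly reductive with invariant ring the $D_{n+2}$ singularity in odd characteristic, including the wild cases $p\mid n$; this is routine but a little lengthy.
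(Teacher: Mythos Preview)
Your route and the paper's diverge substantially. For the \emph{positive} direction (exhibiting the group scheme), you end up doing the same thing the paper does: $\mu_n$ for $A_{n-1}$, an explicit binary dihedral group scheme inside ${\rm SL}_2$ for $D_{n+2}$ in odd characteristic (your $G_n$ is the paper's ${\rm BD}_n$, and your linear-reductivity argument via the connected--\'etale sequence is equivalent to the paper's use of Nagata's theorem on the extension $1\to\mu_{2n}\to{\rm BD}_n\to\mu_2\to1$), and reduction from characteristic $0$ for the $E$-types. The paper computes the $D$-invariant ring explicitly rather than invoking base change, but this is cosmetic.

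The real difference is the \emph{negative} direction. The paper's proof of the proposition is in fact only the positive direction; the assertion that the excluded cases are \emph{not} linearly reductive quotient singularities is deferred to Remark~\ref{linearly reductive remark}, where it is deduced from Artin's computation of the local fundamental groups: in each excluded case there is a canonical singularity of the given type whose local fundamental group has a $\ZZ/p\ZZ$-quotient, which is incompatible with being a linearly reductive quotient. Your approach is more ambitious --- a uniform classification of finite linearly reductive $G\subset{\rm SL}_{2,k}$ via ``McKay over $\ZZ$'' --- and would, if it worked, give both directions at once and in particular prove the stronger statement that \emph{no} canonical singularity of the excluded types (not merely some) is a linearly reductive quotient.

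However, your key step has a gap you have not closed. You write that ``forming this further quotient and an $H$-equivariant resolution of it commutes with the base change $\ZZ[1/|H|]\to k$'', but this presupposes that the $H$-action on $\Aff_k^2/\mu_m$ is already defined over $\ZZ[1/|H|]$. That action is induced by the extension $1\to\mu_m\to G\to H\to1$, and you have not shown that $G$ (or even just the induced $H$-action) lifts to $\ZZ[1/|H|]$; the toric nature of $A_{m-1}$ and the tameness of $H$ tell you nothing about this. Without the lift there is no ``characteristic-$0$ $\widetilde G$'' to which to apply the classical McKay classification, and your list of admissible pairs $(m,H)$ is not justified. Two smaller points: the reduction to a \emph{linear} action of $G\subset{\rm GL}_2$ on $\Aff^2$ needs a slice/linearization argument (this is available in \cite{cst}, but you should cite it), and the group-scheme analogue of Watanabe's Gorenstein criterion you invoke deserves a reference or proof. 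The paper sidesteps all of this by appealing directly to Artin's tables of local fundamental groups.
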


\begin{Caution}
 In Artin's terminology from \cite[\S2]{artin}, {\em tame} means that the local 
 fundamental group is of order prime-to-$p$.
 In particular, a canonical singularity that
 is a tame quotient singularity, 
 is also tame in Artin's sense but the converse is not true.
 For example, an $A_{n-1}$-singularity is always tame in the sense
 of Artin, but it is a tame quotient singularity if and only if
 $p$ does not divide $n$.
\end{Caution}

\begin{proof}
By Artin's approximation results \cite[Theorem (3.10)]{artin approx}, it suffices
to show that a canonical surface singularity is analytically isomorphic to the 
quotient of $\Aff_k^2$ by a finite linearly reductive group scheme, or 
a finite flat group scheme of order prime to $p$, respectively.

We begin with our assertions on $A_{n-1}$-singularities.  
Such singularities are analytically isomorphic to $k[[u,v,w]]/(uv-w^n)$, by \cite[(2.3)]{artin}.
We can realize this as the complete local ring at the singular point of the quotient $\Aff_k^2/\mu_n$, where the action
$$
  \mu_n\,\times\,\Aff_k^2\,\to\,\Aff_k^2
$$
of $\mu_n$ on $\Aff_k^2$ is given by the map
$$
 \begin{array}{ccc}
   k[x,y] & \to & k[x,y]\otimes_k k[t]/(t^n-1)\\
   x &\mapsto&x\otimes t\\
   y &\mapsto&y\otimes t^{n-1}
  \end{array}
$$
Alternatively, the action can be described as follows: for any $k$-scheme $T$, the action of $\zeta\in\mu_n(T)$ is given on $T$-valued points by sending $f(x,y)\in\Aff_T^2(T)$ to $f(\zeta x,\zeta^{-1} y)$. 
This proves our assertion for $A_{n-1}$-singularities, as the group scheme $\mu_n$ is linearly reductive for all $p$ 
and it is of order prime to $p$ precisely when $p$ does not divide $n$.

We now turn to $D_{n+2}$-singularities. For $n\geq2$ and $p\geq3$, we
consider the closed subscheme ${\rm BD}_{n}$ 
of ${\rm SL}_2$ defined by the surjection
$$
 \begin{array}{ccc}
   k[a_{11},a_{12},a_{21},a_{22}]/(a_{11}a_{22}-a_{12}a_{21}-1)
   &\to& k[a,b]/( (a^{2n}-1)(b^{2n}-1), ab)\\
   a_{11} &\mapsto&a\\
   a_{12} &\mapsto&b\\
   a_{21} &\mapsto&-b^{2n-1}\\
   a_{22} &\mapsto&a^{2n-1}
 \end{array}
$$
The Hopf algebra structure on $k[{\rm SL}_2]$ induces a Hopf algebra structure on ${\rm BD}_{n}$.  
Moreover, we have a short exact sequence 
$$
 1\,\to\,\mu_{2n} \,\to\, {\rm BD}_{n} \,\to\, \mu_2 \,\to\,1,
$$
where the inclusion of $\mu_{2n}$ into ${\rm BD}_{n}$ is defined by 
$$
 \begin{array}{cccc}
  k[a,b]/( (a^{2n}-1)(b^{2n}-1), ab) &\to&k[z]/(z^{2n}-1)\\
  a&\mapsto& z \\
  b&\mapsto& 0 &.
 \end{array}
$$
Nagata's theorem \cite[Theorem 2]{nagata} therefore shows that ${\rm BD}_{n}$ is linearly reductive.  

If $p$ does not divide $n$, then ${\rm BD}_{n}$ is the constant
group scheme associated to the binary dihedral group of order $4n$, and corresponds
to the subgroup of ${\rm SL}_2$ generated by the matrices
$$
 \left(
 \begin{array}{cc}
   \zeta & 0\\ 0& \zeta^{-1}
 \end{array}
 \right)
\mbox{ \quad and \quad } 
 \left(
 \begin{array}{cc}
   0&1\\-1&0
 \end{array}
\right)
$$
where $\zeta$ is a primitive $2n$.th root of unity.

The standard action of ${\rm SL}_2$ on $k[x,y]$ induces an action of ${\rm BD}_n$, and a straightforward computation of invariants reveals
\begin{eqnarray*}
 k[x,y]^{{\rm BD}_n} &\cong&
  k[x^{2n}+y^{2n},(xy)^2,xy(x^{2n}-y^{2n})] \\
 &\cong&
  k[u,v,w]/(w^2-u^2v-4v^{n+1})\,. 
\end{eqnarray*}
Thus, $\Aff_k^2/{\rm BD}_{n}$ is analytically isomorphic to
the singularity of type $D_{n+2}$ in every characteristic $p\geq3$,
and our assertions on $D_{n+2}$-singularities follow.

Finally, the claims for $E_6$, $E_7$ and $E_8$ follow from the classification and the local fundamental groups 
of these singularities in \cite[\S5]{artin}.
\end{proof}

\begin{Remark}
  \label{linearly reductive remark}
  By a result of Mumford \cite{mumford topology}, a two-dimensional, normal, complex analytic germ is smooth if and only
  if its local fundamental group is trivial.
  This is wrong in positive characteristic, but a version using the local Nori fundamental group scheme was given in 
  \cite[Section 4]{esnault viehweg}.
  In any case, for a linearly reductive quotient singularity in characteristic $p$, the local Nori fundamental group scheme 
  is linearly reductive, which implies that its local \'etale fundamental group cannot have subquotients of order $p$.
  
  Now, for the remaining combinations $(\Gamma, p)$ of simply laced Dynkin diagram $\Gamma$
  and prime $p\leq5$ not in the table of Proposition \ref{canonical linearly reductive}, there exist canonical surface singularities
  of type $\Gamma$ in characteristic $p$, whose local fundamental groups have a 
  $\ZZ/p\ZZ$-quotient, see the lists in \cite[\S4-5]{artin}.
  In particular, by what we have just noted, such singularities cannot be linearly reductive quotient singularities.
  Incidentally, the same list of $(\Gamma,p)$ not in the table of Proposition \ref{canonical linearly reductive} 
  gives precisely those canonical surface singularities that are not taut, that is, their analytic isomorphism type is not determined
  by the dual resolution graph of the minimal resolution, see \cite[\S3]{artin}.
  It would be interesting to know whether there is a deeper reason for this coincidence. 
  
  Let us also note that in characteristic $p$, quotients by $\alpha_p$ or $\ZZ/p\ZZ$, both of which are 
  not linearly reductive, may give rise to non-rational singularities, see 
  \cite[Proposition 3.2]{lie uniruled} or \cite{lorenzini}.
\end{Remark}

\subsection{Deformation theory}
\label{subsec:defthy}

Let $X$ be an affine surface over an algebraically closed field $k$
with an isolated singularity.
Let $f:Y\to X$ be the minimal resolution of this singularity,
and $E$ the reduced exceptional divisor.

We start with a result that is implicit in the explicit lists of \cite[\S3]{artin}. 
Although we will not need it in the sequel, we include it
 for completeness of our results and further reference:

\begin{Proposition}[Artin $+\varepsilon$]
 Let $X$ have a canonical singularity that is a linearly
 reductive quotient singularity.
 Let $n$ be the number of $(-2)$-curves of $E$ and $d = \dim_k {\rm Ext}^1(L_{X/k},\OO_X)$.
 \begin{enumerate}
  \item if $X$ has a tame quotient singularity, then $d=n$,
  \item if $X$ has a $D_n$-singularity and $p\geq3$, then $d=n$, and
  \item if $X$ has a wild $A$-singularity, then $d=n+1$.
 \end{enumerate}
 In particular, we find $d\geq n$ in all cases.
\end{Proposition}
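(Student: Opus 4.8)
The plan is to identify $d$ with the dimension of a Tjurina algebra and then to evaluate it type by type, using Artin's normal forms together with the invariant‑theoretic presentations from the proof of Proposition~\ref{canonical linearly reductive}. Since $X$ is affine with an isolated singularity at $x$, the sheaves $\mc{E}xt^j(L_{X/k},\OO_X)$ are supported at $x$ for $j>0$ while $H^i(X,\Theta_X)=0$ for $i>0$, so the local‑to‑global spectral sequence gives $\Ext^1(L_{X/k},\OO_X)\cong T^1_{R/k}$ for the complete local ring $R=\widehat{\OO}_{X,x}$. By \cite{artin}, $R$ is a hypersurface, say $R\cong P/(g)$ with $P=k[[x_1,x_2,x_3]]$, and representing $L_{R/k}$ by the two‑term complex $[(g)/(g^2)\to\Omega^1_{P/k}\otimes_P R]$ one computes
$$
 T^1_{R/k}\;\cong\;\coker\bigl(R^{3}\longrightarrow R\bigr)\;\cong\;P/(g,\partial_1 g,\partial_2 g,\partial_3 g),
$$
where the left‑hand map sends $(r_i)_i$ to $\sum_i r_i\,\partial_i g$. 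Hence $d=\tau(g):=\dim_k P/(g,\partial_1 g,\partial_2 g,\partial_3 g)$, and it remains to compute $\tau(g)$ for each singularity in the statement.

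\emph{Types $A$ and $D$.} An $A_n$‑singularity is $\Aff_k^2/\mu_{n+1}$, and one may take $g=uv-w^{n+1}$. The Jacobian ideal of $g$ is $(u,v,w^{n})$ when $p\nmid n+1$ and $(u,v)$ when $p\mid n+1$, so $\tau(g)=n$, respectively $\tau(g)=n+1$; this proves (3) and the $A$‑part of (1). A $D_n$‑singularity with $p\geq3$ is $\Aff_k^2/{\rm BD}_{n-2}$, and by Proposition~\ref{canonical linearly reductive} one may take $g=w^2-u^2v-4v^{n-1}$. A short computation shows that $P/(g,\partial_1 g,\partial_2 g,\partial_3 g)$ has $k$‑basis $\{1,u,v,v^2,\dots,v^{n-2}\}$, irrespective of how $p$ relates to $n$, so $\tau(g)=n$; this gives (2) and the $D$‑part of (1).

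\emph{Type $E$ and conclusion.} By \cite{artin}, in the ranges $p\geq5$ (for $E_6$, $E_7$) and $p\geq7$ (for $E_8$) these singularities have the classical normal forms $z^2+x^3+y^4$, $z^2+x^3+xy^3$, $z^2+x^3+y^5$, each quasi‑homogeneous of total degree prime to $p$ in the relevant range. Consequently $g$ lies in its own Jacobian ideal, $\tau(g)$ equals the Milnor number, and a direct count gives $\tau(g)=6,7,8$, i.e.~the number of $(-2)$‑curves; this settles the $E$‑part of (1). Finally, Proposition~\ref{canonical linearly reductive} shows that the canonical surface singularities which are linearly reductive quotient singularities are exactly those covered by (1)--(3), so the equalities $d=n$ in (1) and (2) and $d=n+1$ in (3) yield $d\geq n$ in all cases.

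The Jacobian‑ideal computations are routine; the one subtle point is the $D$‑case with $p\mid n-1$ --- which is in fact one of the \emph{tame} cases --- where the Jacobian ideal of $g$ fails to be $\mm$‑primary, so that the Milnor algebra is infinite‑dimensional and $\tau(g)$ must be extracted from the Tjurina algebra directly rather than via the Milnor number. The remaining care is bookkeeping: verifying that the equations above are indeed Artin's normal forms in each relevant characteristic, and that the cases (1)--(3) really do exhaust the linearly reductive quotient canonical singularities.
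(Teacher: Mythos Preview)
Your proof is correct and follows essentially the same approach as the paper: identify $\Ext^1(L_{X/k},\OO_X)$ with the Tjurina algebra $P/(g,\partial_1 g,\partial_2 g,\partial_3 g)$ of a hypersurface normal form, and then compute its dimension case by case using Artin's equations. The paper's own proof is terse---it cites \cite[\S4]{CvS} for the Tjurina identification and \cite[\S3]{artin} for the equations, and leaves the actual computations to the reader---so your write-up is in fact a fleshed-out version of the same argument, including the useful remark that in the $D_n$-case with $p\mid n-1$ the Milnor algebra is infinite-dimensional and one must work directly with the Tjurina algebra.
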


\begin{proof}
For any scheme $Z$ of the form $\Spec k[x,y,z]/(g(x,y,z))$, ${\rm Ext}^1(L_{Z/k},\OO_X)$ can be identified with
$
k[x,y,z]/(g,\,\partial_x g,\, \partial_y g,\, \partial_z g),
$
see for example the discussion in \cite[\S4]{CvS}. 
Each of the cases (1)--(3) are of this form; using the explicit equations $g$ obtained in 
\cite[\S3]{artin}, the result follows from straightforward computations.

For example, a singularity of type $A_n$ 
is given in every characteristic by  $g=z^{n+1}-xy$ by \cite[(2.3)]{artin}.
Moreover, $E$ consists of $n$ curves, all of which are $(-2)$-curves.
We compute $\partial_x g=-y$, $\partial_y g=-x$, $\partial_z g=(n+1)z^n$ and find
$$
k[x,y,z]/(g,\,\partial_x g,\, \partial_y g,\, \partial_z g)\,\iso\,
k[z]/(z^{n+1}, (n+1)z^n)\,.
$$
This is a $k$-vector space of dimension $d=n$ if $p$ does not divide $n+1$, that is, if the singularity is tame.
In case, $p$ divides $n+1$, the singularity is wild and then, this vector space is of dimension $d=n+1$.
We leave the remaining cases to the reader.
\end{proof}

The following proposition generalizes \cite[Proposition (1.10)]{bw} to
positive characteristic.
The classical proof over the complex numbers relies on the equivariance 
of these singularities, as well as a vanishing result of 
Tjurina \cite{tjurina}.
Equivariance does not hold for wild $A$-singularities, and
we see in Remark \ref{tjurina fails} 
that Tjurina vanishing fails for every
canonical singularity in every positive characteristic.

\begin{Proposition}[Wahl $+\varepsilon$]
  \label{dimension cohomology}
  Suppose $X$ has a canonical singularity that
  is a linearly reductive quotient singularity.  
  If $n$ denotes the number of $(-2)$-curves of $E$, then
  $$
      n \,=\, \dim_k H^1(Y,\Theta_Y)\,=\,\dim_k H^1_E(Y,\Theta_Y).
  $$ 
\end{Proposition}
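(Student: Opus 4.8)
The plan is to compute both cohomology groups $H^1(Y,\Theta_Y)$ and $H^1_E(Y,\Theta_Y)$ by reducing to a formal neighborhood of the exceptional divisor $E$ and then stratifying by the irreducible $(-2)$-curves $E_i$. First I would observe that since $X$ is affine with an isolated singularity and $f:Y\to X$ is the minimal resolution, $\Theta_Y$ is locally free and $f$ is an isomorphism away from $E$; hence by the theorem on formal functions, $H^1(Y,\Theta_Y)$ is computed on the formal completion $\widehat{Y}$ along $E$, i.e.\ $H^1(Y,\Theta_Y)\cong \varprojlim_m H^1(Y,\Theta_Y|_{mE})$ where $mE$ denotes the $m$-th infinitesimal thickening. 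Similarly $H^1_E(Y,\Theta_Y)$ fits into the local cohomology sequence relating $H^i(Y,\Theta_Y)$, $H^i(Y\setminus E,\Theta_Y)$, and $H^i_E(Y,\Theta_Y)$; since $X$ has a rational (in fact canonical) singularity and the restriction of $\Theta_Y$ to a punctured neighborhood of the singular point has the expected cohomology, I would extract the equality $\dim_k H^1(Y,\Theta_Y)=\dim_k H^1_E(Y,\Theta_Y)$ from this sequence together with the vanishing of the relevant $H^0$ and $H^1$ terms on $Y\setminus E$.

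The core of the argument is then to show $\dim_k H^1(Y,\Theta_Y)=n$. Here I would proceed by induction using the filtration of the thickened divisor $mE$ and, crucially, the decomposition of $E$ into its $n$ components $E_1,\dots,E_n$. Lemma \ref{l:dim-coh} already gives that on each single component, $H^1(\Theta_Y|_{(m+1)E_i})\to H^1(\Theta_Y|_{mE_i})$ is an isomorphism of $1$-dimensional vector spaces and that $H^1(E_i,\Theta_Y|_{E_i}(2m))=0$ for $m\geq 1$. The strategy is to combine these $n$ "local" one-dimensional contributions by writing a Mayer--Vietoris / filtration sequence that builds $\Theta_Y|_{mE}$ out of the $\Theta_Y|_{mE_i}$ along the intersection points $E_i\cap E_j$ (which, by the Dynkin-diagram structure recalled from \cite{artin numerical}, are at most nodes arranged in a tree). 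The key point is that $H^0$ of $\Theta_Y$ restricted to the zero-dimensional intersection schemes surjects onto the obstructions to gluing, so no extra $H^1$ contributions appear and none of the $n$ contributions cancel; this yields $\dim_k H^1(Y,\Theta_Y)\le n$ with equality because each $E_i$ contributes a nonzero class that survives.

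For the $\mu_n$-quotient case I would instead invoke the explicit description: $X$ is analytically $\Aff_k^2/\mu_n$ and its minimal resolution is the Hirzebruch--Jung / toric resolution, all of whose exceptional components are $(-2)$-curves forming a chain $A_{n-1}$ when the singularity is canonical; the same filtration argument applies, and in the more general $\mu_n$ setting one uses the results of Lee--Nakayama \cite{lee} to identify $\Theta_Y$ along $E$. Finally I should record that an $A$-type chain with $n$ nodes indeed has exactly $n$ $(-2)$-curves, matching the count, and that in the wild $A_n$ case nothing changes for this particular statement (the discrepancy $d=n+1$ versus $n$ from the preceding proposition concerns ${\rm Ext}^1(L_{X/k},\OO_X)$, not $H^1(\Theta_Y)$).

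The main obstacle I anticipate is the gluing step: one must check carefully that assembling the $n$ one-dimensional pieces across the intersection points of the $E_i$ produces neither extra classes nor relations, i.e.\ that the relevant connecting maps $H^0(\Theta_Y|_{E_i\cap E_j})\to H^1(\text{glued sheaf})$ vanish. This requires knowing the local structure of $\Theta_Y$ at each node of $E$ — in particular that $\Theta_Y$ restricted to a node has enough sections — which should follow from the $(-2)$-curve geometry and the transversality of the components in the $A$/$D$/$E$ configuration, but it is the point where the positive-characteristic and wild cases must be handled without recourse to the equivariance/Tjurina-vanishing shortcuts available over $\CC$.
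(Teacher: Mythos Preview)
Your approach to $\dim_k H^1(Y,\Theta_Y)=n$ via the formal functions theorem and a gluing argument along the components $E_i$ is essentially the paper's strategy. The paper implements the gluing via the partial normalization $\nu_m:\coprod_i mE_i\to mE$: one obtains a short exact sequence
\[
0\to\Theta_Y|_{mE}\to\nu_{m,\ast}\bigoplus_i\Theta_Y|_{mE_i}\to\mathcal{T}_m\to 0
\]
with $\mathcal{T}_m$ torsion supported on the nodes of $E$, and the work is to show that $\bigoplus_i H^0(\Theta_Y|_{mE_i})$ surjects onto $H^0(\mathcal{T}_m)$ for every $m$. The base case $m=1$ is handled by an explicit dimension count using $\Theta_E\cong\bigoplus_i\OO_{\PP^1}(2-t_i)$ with $t_i=\#(\operatorname{Sing}(E)\cap E_i)$; the inductive step uses Lemma~\ref{l:dim-coh}. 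So your ``main obstacle'' is real, and the paper resolves it by this concrete surjectivity argument rather than by asserting vanishing of connecting maps.

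Your treatment of $H^1_E(Y,\Theta_Y)$, however, has a genuine gap. You propose to read off $\dim H^1_E=\dim H^1$ from the local cohomology sequence using ``vanishing of the relevant $H^0$ and $H^1$ terms on $Y\setminus E$.'' Neither vanishes: $H^0(Y\setminus E,\Theta_Y)\cong H^0(X\setminus\{x\},\Theta_X)$ contains all derivations of the singularity, and $H^1(Y\setminus E,\Theta_Y)$ contains $\Ext^1(L_{X/k},\OO_X)$ by Schlessinger's sequence. More to the point, for wild $A$-singularities the map $\alpha:H^1_E\to H^1$ is \emph{not} injective (this is exactly Proposition~\ref{prop:deformation canonical singularity}(\ref{def-can-sing:1}), reflecting the failure of equivariance), so equality of dimensions cannot be extracted from the exact sequence alone. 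The paper bypasses this: for canonical singularities it quotes the proof of \cite[Theorem~(6.1)]{wahl} to obtain $\dim H^1_E=n$ directly, and for non-canonical $\mu_n$-quotients it first computes $\dim H^1=n$ via the logarithmic tangent sequence (using $H^1(\Theta_Y(-\log E))=0$ from \cite[Proposition~2.11(2)]{lee}), then combines the inequality $\dim H^1_E\ge n$ from \cite[(1.5)]{bw} with equivariance \cite[Proposition~2.11(4)]{lee} to conclude.
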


\begin{proof}
We have $\dim_k H^1_E(Y,\Theta_Y)=n$ by the proof of \cite[Theorem (6.1)]{wahl}; 
note that the required vanishing results are provided by \cite[Theorem (5.19)]{wahl}.

For the other cohomology group, we consider the short exact sequence
$$
0\,\to\,\Theta_Y(-\log E)\,\to\,\Theta_Y\,\to\,\bigoplus_i N_{E_i}\,\to\,0.
$$
Since $N_{E_i}\iso\OO_{\PP^1}(-2)$ for all $i$, our assertion follows
once we show $H^1(Y, \Theta_Y(-\log E))=0$.
By local duality, we have 
$$
H^1(Y, \, \Theta_Y(-\log E)) \,\iso\, H^1_E(Y, \,  \Theta_Y(-\log E)^\vee\otimes\omega_Y)^\vee,
$$
see \cite[Theorem 4.9]{badescu} for a version that is already adapted to our situation.
Since $Y$ is the minimal resolution of a canonical surface singularity, we have $\omega_{Y/X}\iso\OO_Y$.
Replacing $X$ by an open affine neighborhood of the singularity will not affect
$H^1_E(Y, \,  \Theta_Y(-\log E)^\vee\otimes\omega_Y)$, and we may assume $\omega_Y\iso\OO_Y$.
Moreover, since $\Theta_Y(-\log E)$ is locally free of rank $2$, we compute
$$
  \Theta_Y(-\log E)^\vee\,\iso\, 
   \Theta_Y(-\log E)\,\otimes\,
   \Lambda^2(\Theta_Y(-\log E)^\vee)\,\iso\,\Theta_Y(-\log E)(E),
$$
where the second isomorphism follows from $\omega_Y\iso\OO_Y$ and a local computation (see, for example \cite[Section (1.2)]{wahl quasi}).
By \cite[Theorem (5.19)]{wahl}, $H^1_E(Y,\Theta_Y(-\log E)(E))=0$, and thus,
$H^1(Y,\Theta_Y(-\log E))=0$.
\end{proof}

\begin{Remark}
 Quotients of smooth surfaces by $\mu_n$, which is linearly reductive, 
 give rise to toric singularities, of which $A_n$-singularities are a special case.
 For such singularities, Lee and Nakayama \cite[Proposition 2.11]{lee} established
  the crucial vanishing result  $H^1(Y,\Theta_Y(-\log E))=0$ using 
  toric geometry.
  From this, one can deduce an analogue of Proposition \ref{dimension cohomology} for
  $\mu_n$-quotient singularities.
\end{Remark}

\begin{Remark}[{\bf Failure of Tjurina vanishing}]
 \label{tjurina fails}
  Suppose $X$ has a canonical singularity
  or a rational triple point.
  In characteristic zero, Tjurina \cite{tjurina} proved that 
  $H^1(D,\Theta_D)=0$ for every effective divisor $D$ supported on $E$.
  This vanishing result can be used to prove that these singularities
  are taut,
  see \cite{tjurina} and the discussion at the beginning of \cite[\S2]{laufer}.
  It is also used in the proof of \cite[Proposition (1.10)]{bw},
  which we generalize to positive characteristic in 
  Proposition \ref{prop:deformation canonical singularity} below.

   Let $D$ be an effective divisor supported on $E$ and let ${\mathcal I}_D\subset\OO_Y$ be its
  ideal sheaf.   For every $n\geq1$, the ideal sheaf of $nD$ is ${\mathcal I}_D^n$, and 
  we consider the conormal sequence 
  $$
      {\mathcal I}_D^n/{\mathcal I}_D^{2n}\,\stackrel{\delta}{\longrightarrow}\,\Omega_Y|_{nD}\,\to\,\Omega_{nD}\,\to\,0.
  $$
  For local sections $x$ of ${\mathcal I}_D^n$,  the map $\delta$ is given by $\delta(\overline{x})=dx$.  In particular, if $p$ divides $n$, then $\delta$ is identically zero, and after taking duals, we obtain an isomorphism
  $$
       \Theta_{nD} \,\cong\, \Theta_Y|_{nD}\mbox{ \quad whenever \quad }p|n,
  $$
  which is in stark contrast to characteristic zero, see \cite[(1.6)]{bw}. 
  Next, assume that $-D$ is $f$-ample.
  Replacing $D$ by a sufficiently large multiple, we may assume
  that $H^1(Y,\Theta_Y(-mD))=0$ for all $m\geq1$.
  Taking cohomology in the short exact sequence
  $$
   0\,\to\,\Theta_Y(-nD)\,\to\,\Theta_Y\,\to\,\Theta_Y|_{nD}\,\to\,0
  $$ 
  and using Proposition \ref{dimension cohomology}, we conclude that
  $$
     H^1(nD,\Theta_{nD})\,\neq\,0 \mbox{ \quad whenever \quad }p|n.
  $$
  In particular, Tjurina vanishing fails for every canonical
  singularity in every positive characteristic $p\geq7$.
\end{Remark}

The importance of the cohomology groups $H^1(Y,\Theta_Y)$ and $H^1_E(Y,\Theta_Y)$ considered in Proposition \ref{dimension cohomology} is the following.  The semiuniversal deformation space $\Def_X$ of the singularity $X$ 
has Zariski tangent space ${\rm Ext}^1(L_{X/k},\OO_X)$.
Similarly, the Zariski tangent space of $\Def_Y$ is equal to
$H^1(Y,\Theta_Y)$.
By Proposition \ref{burns-wahl}, we have a morphism
$$
   \Def_Y\,\to\,\Def_X,
$$
which induces a map $\beta:H^1(Y,\Theta_Y)\to{\rm Ext}^1(L_{X/k},\OO_X)$
on Zariski tangent spaces.
Over the complex numbers, $\beta$ is zero \cite[Proposition (1.10)]{bw}; that is, 
a first order deformation of $Y$ induces a locally trivial first-order deformation of $X$.
Moreover, $\ker\beta$ can be identified with the local cohomology group
$H^1_E(\Theta_Y)$, see the short exact sequence on top of \cite[page 73]{bw}. 
In arbitrary characteristic, there is always a map
$\alpha:H^1_E(\Theta_Y)\to\ker\beta$, see, for example, Proposition \ref{prop:deformation canonical singularity} below.
The map $\alpha$ is injective whenever the singularity is equivariant, a property that was studied for canonical singularities
by Wahl \cite[Theorem 5.17]{wahl} and for toric singularities by Lee and Nakayama
\cite[Proposition 2.11]{lee}. For linearly reductive and canonical surface singularities, we have the following result:

\begin{Proposition}
 \label{prop:deformation canonical singularity}
  Suppose $X$ has a canonical singularity that is a linearly reductive quotient singularity. Then there is an exact sequence
  $$
      H^1_E(Y,\Theta_Y)\,\stackrel{\alpha}{\to}\,
      H^1(Y,\Theta_Y)
      \,\stackrel{\beta}{\to}\,
      {\rm Ext}^1(L_{X/k},\OO_X)
  $$
  with $\beta$ as above. Furthermore, 
  \begin{enumerate}
   \item \label{def-can-sing:1} if $X$ is a wild $A$-singularity, then $\alpha$ is not injective
     and $\beta$ is non-zero,
   \item \label{def-can-sing:2} in all other cases, $\alpha$ is an isomorphism and $\beta$ is zero.
  \end{enumerate}
\end{Proposition}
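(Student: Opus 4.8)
The plan is to extract everything from the local cohomology sequence of the pair $(Y,E)$, using that $f$ identifies $Y\setminus E$ with $U:=X\setminus\{x\}$, where $x$ is the singular point. Since $\Theta_X$ is reflexive, hence $S_2$, restriction gives $H^0(X,\Theta_X)=H^0(U,\Theta_U)$, and since $f_\ast\Theta_Y\into\Theta_X$ is an isomorphism over $U$ we get $H^0(Y,\Theta_Y)\into H^0(X,\Theta_X)$; as $\Theta_Y$ is torsion free, $H^0_E(Y,\Theta_Y)=0$. Hence the local cohomology sequence reads
$$
0\to H^0(Y,\Theta_Y)\to H^0(X,\Theta_X)\to H^1_E(Y,\Theta_Y)\xrightarrow{\ \alpha\ }H^1(Y,\Theta_Y)\xrightarrow{\ \mathrm{res}\ }H^1(U,\Theta_U),
$$
so $\ker\alpha\cong(\Theta_X/f_\ast\Theta_Y)_x$ measures the vector fields on $X$ failing to extend across $E$, and $\mathrm{im}\,\alpha=\ker(\mathrm{res})$. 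Because $Rf_\ast\OO_Y=\OO_X$ and $f$ is an isomorphism over $U$, blowing down a first-order deformation of $Y$ and then restricting to $U$ agrees with restricting the deformation of $Y$ to $Y\setminus E\cong U$; thus $\mathrm{res}=\rho\circ\beta$, where $\rho:\Ext^1(L_{X/k},\OO_X)\to H^1(U,\Theta_U)$ is restriction of first-order deformations. The exact sequence of the Proposition follows once $\rho$ is injective: for $X=\Spec k[u,v,w]/(g)$ a canonical singularity — and for a $\mu_n$-quotient singularity, which is an $A_{n-1}$-singularity — the dual of the conormal sequence is the exact sequence $0\to\Theta_X\to\OO_X^{\oplus3}\to\OO_X\to\mathcal T^1_X\to0$ with $\mathcal T^1_X=\Ext^1(L_{X/k},\OO_X)$ a skyscraper at $x$; splitting it into two short exact sequences and chasing local cohomology along $x$ (using $\mathrm{depth}_x\OO_X=2$ and $\mathrm{depth}_x\Theta_X\geq2$) identifies $\Ext^1(L_{X/k},\OO_X)=(\mathcal T^1_X)_x$ with a subspace of $H^2_x(X,\Theta_X)=H^1(U,\Theta_U)$, compatibly with $\rho$. (For general $\mu_n$-quotients one instead invokes \cite[Proposition 2.11]{lee}.)

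By Proposition \ref{dimension cohomology} we have $\dim_k H^1_E(Y,\Theta_Y)=\dim_k H^1(Y,\Theta_Y)=n$, so along the exact sequence $\alpha$ is injective $\iff$ $\alpha$ is an isomorphism $\iff$ $\mathrm{im}\,\alpha=\ker\beta=H^1(Y,\Theta_Y)$ $\iff$ $\beta=0$. Hence in each case it suffices to decide whether $\alpha$ is injective. For part (\ref{def-can-sing:2}), in all non-wild cases the singularity is equivariant — for canonical singularities by \cite[Theorem 5.17]{wahl}, for $\mu_n$-quotients by \cite[Proposition 2.11]{lee} — and equivariance forces $\alpha$ to be injective, exactly as in \cite[Corollary 1.3]{bw}, whose argument must be redone in positive characteristic using the $\GG_m$-action on the singularity; hence $\alpha$ is an isomorphism and $\beta=0$.

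For part (\ref{def-can-sing:1}), a wild $A$-singularity is $X=\Spec k[u,v,w]/(uv-w^n)$ with $p\mid n$, and I would exhibit a vector field on $X$ not extending across $E$. The derivation $\partial_w$ of $k[u,v,w]$ preserves the ideal $(uv-w^n)$ precisely because $\partial_w(uv-w^n)=-nw^{n-1}=0$ in characteristic $p$ — which is exactly the wildness hypothesis — so it defines a class in $H^0(X,\Theta_X)$. On the other hand, in the affine chart $w=uw'$, $v=u^{n-1}(w')^n$ of the blow-up of the reduced singular point (a normal surface through which $f$ factors), one computes $\partial_w=u^{-1}\,\partial_{w'}$, which has a pole along the exceptional divisor; hence $\partial_w\notin f_\ast H^0(Y,\Theta_Y)$, so $\ker\alpha\neq0$. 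Therefore $\alpha$ is not an isomorphism and $\beta\neq0$, consistently with the preceding proposition which gives $\dim_k\Ext^1(L_{X/k},\OO_X)=n+1$ in this case.

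The step I expect to be the main obstacle is the equivariance input for part (\ref{def-can-sing:2}): one must transpose \cite[\S1--2]{bw}, in particular the implication ``equivariant $\Rightarrow\alpha$ injective'', to positive characteristic, and determine exactly which of the singularities classified in Proposition \ref{canonical linearly reductive} are equivariant — which is precisely the analysis carried out by Wahl \cite[\S5]{wahl} for canonical singularities and by Lee--Nakayama \cite[\S2]{lee} for $\mu_n$-quotients. A secondary technical point is the injectivity of $\rho$, which genuinely relies on the complete-intersection structure of canonical surface singularities.
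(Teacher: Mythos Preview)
Your argument is correct and follows essentially the same route as the paper: set up the local cohomology sequence, factor the restriction map through $\beta$ via Schlessinger's injection $\rho$, use Proposition~\ref{dimension cohomology} to reduce everything to injectivity of $\alpha$, invoke equivariance (Wahl, Lee--Nakayama) plus \cite[Corollary~(1.3)]{bw} for the non-wild case, and exhibit a non-extending vector field in the wild case. Two minor remarks: your explicit computation with $\partial_w$ is a concrete version of what the paper obtains by citing \cite[Remarks~(5.18.1)]{wahl}; and for the injectivity of $\rho$ in the general $\mu_n$-quotient case (which is not a hypersurface, so your conormal-sequence argument does not apply and your parenthetical appeal to \cite{lee} is misplaced), the paper simply invokes \cite[Lemma~2]{schlessinger}, which covers all isolated normal surface singularities at once.
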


\begin{proof}
We have two exact sequences
\[
\xymatrix{
H^1_E(Y,\Theta_Y)\ar[r]^\alpha & H^1(Y,\Theta_{Y})\ar[r]^\gamma & H^1(Y\backslash E,\Theta_{Y\backslash E})\ar[d]^{\cong}\\
 0\ar[r] & \Ext^1(L_X,\,\OO_X)\ar[r] & H^1(X\backslash{\rm Sing}(X),\, \Theta_{X\backslash{\rm Sing}(X)})
}
\]
where the first row is the long exact sequence of local cohomology,
and the second row is the exact sequence of \cite[Lemma 2]{schlessinger}.
As explained in \cite[(1.15)]{bw}, the image of $\gamma$ lies in
${\rm Ext}^1(L_X,\OO_X)$, and so
$\gamma$ can be identified with the tangent
map $\beta$ to the Burns--Wahl blow-down.

If the singularity is not of type $A_{n-1}$ with $p|n$, 
then $X$ is equivariant, see \cite[Theorem (5.17)]{wahl}.
For such singularities, the map $\alpha$ is injective by 
\cite[Corollary (1.3)]{bw}.
Since $\dim_k H^1_E(\Theta_Y)=\dim_k H^1(\Theta_Y)$ by 
Proposition \ref{dimension cohomology}, 
we find that $\alpha$ is an isomorphism and $\beta$ is zero.

On the hand, if $X$ is an $A_{n-1}$-singularity with $p|n$,
that is, a wild $A$-singularity,
then the torsion sheaf ${\mathcal T}$ in the short exact
sequence
$$
0\,\to\,f_\ast\Theta_Y\,\to\,\Theta_X\,\to\,{\mathcal T}\,\to\,0
$$
is non-trivial, and the inclusion $H^0(\Theta_Y)\to H^0(\Theta_X)$
is strict, see \cite[Remarks (5.18.1)]{wahl}.
From this, we conclude that the restriction map
$H^0(Y,\Theta_Y)\to H^0(Y\smallsetminus E,\Theta_Y)$ is not surjective,
which implies that $\alpha$ is not injective.
Using Proposition \ref{dimension cohomology}, we see that
$\beta$ is non-zero for dimension reasons.
%
\end{proof}

\begin{Remark}
 \label{beta non zero remark}
 In characteristic $2$, the deformation 
 $$
     z^2\,+\,tz\,+\,xy\,=\,0
 $$
 of the $A_1$-singularity over $k[[t]]$ defines a curve inside 
 the semiuniversal deformation space of this singularity.
 This deformation admits a simultaneous resolution of singularities
 over $k[[t]]$, 
 namely by blowing up the ideal $(x,y,z+t)$, see also the discussion 
 in \cite[page 345]{artin simult res}.
 The Burns--Wahl blow-down of this simultaneous resolution gives
 us back the original deformation.
 This shows explicitly that $\beta$ is non-zero.
\end{Remark}

\section{Comparing the minimal resolution and the stacky resolution}
\label{sec:stackyres}

In this section, we prove Theorem \ref{thm:main-stacks}. In \S\ref{subsec:stackyres}, we discuss stacky resolutions of linearly reductive quotient 
singularities and relate lifts of the stack to lifts of its coarse space. 
In \S\ref{subsec:positive-stacks}, we prove (\ref{stacks:X==>Y}) and (\ref{stacks:Y==>X}) of Theorem \ref{thm:main-stacks}, and in \S\ref{subsec:counter-ex-stacks}, 
we prove (\ref{stacks:negative}).

\subsection{Stacky resolutions}
\label{subsec:stackyres}
It is a well-known result that if $X$ is a scheme with tame quotient singularities over a field, then there is a canonical way to endow $X$ with stacky structure in a such way that it becomes smooth.  More precisely, there is a canonical smooth tame Deligne-Mumford stack $\X$ with coarse space $X$ such that the coarse space map $\X\to X$ is an isomorphism over $X^{sm}$ (see \cite[2.9]{int}).

As shown in Proposition \ref{canonical linearly reductive}, most canonical surface singularities are linearly reductive quotient singularities. We are therefore interested in a generalization of the above result for linearly reductive quotient singularities. In this generalization, the role of tame Deligne-Mumford stacks is replaced by the following class of Artin stacks introduced in \cite[Definition 3.1]{aov} (recall our hypotheses from the notation section).

\begin{Definition}
\label{def:tame}
 An Artin stack $\X$ over a base scheme $S$ is called \emph{tame} if the pushforward functor from the category of quasi-coherent sheaves on $\X$ to the category of quasi-coherent sheaves on its coarse space is exact.
\end{Definition}


We then have the following generalization of \cite[2.9]{int}.

\begin{Theorem}[{\cite[Theorem 1.10]{cst}}]
\label{thm:canstack}
  If $X$ is a scheme with linearly reductive quotient singularities over a perfect field $k$, 
  then there is a smooth tame stack $\X$ over $k$ with coarse space $X$.  
  Moreover, if $X^{sm}$ denotes the smooth  locus of $X$, then the induced map
  $$
   \X\,\times_X\,X^{sm}\,\to\, X^{sm}
  $$
  is an isomorphism.
\end{Theorem}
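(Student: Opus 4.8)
The plan is to build $\X$ by gluing \emph{canonical} stacky charts along an \'etale cover of $X$, in the spirit of the construction of the canonical stack for tame Deligne--Mumford stacks in \cite[2.9]{int}, but with finite linearly reductive group schemes in place of finite \'etale ones. By Definition \ref{def:linearly reductive} there is an \'etale cover $\{X_i\to X\}_{i\in I}$ together with smooth affine $k$-schemes $U_i$ and finite linearly reductive group schemes $G_i$ acting on $U_i$ such that $X_i$ is the coarse space of $[U_i/G_i]$. The theorem will then follow from two claims: (a) each chart $[U_i/G_i]$ can be replaced by a canonically determined smooth tame chart $\X_i\to X_i$ whose coarse map is representable and is an isomorphism over the smooth locus $X_i^{sm}$; and (b) any two such canonical charts restrict to canonically isomorphic stacks over an overlap, so that they descend to a global stack $\X\to X$.

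For (a) I would proceed in two steps. First, remove the generic gerbe: the subgroup scheme of $G_i$ acting trivially on $U_i$ is normal and linearly reductive, and after dividing $G_i$ by it we may assume $G_i$ acts faithfully, so that $[U_i/G_i]\to X_i$ is representable. Second, remove pseudo-reflections: one shows that there is a normal linearly reductive subgroup scheme $H_i\subseteq G_i$, the ``subgroup generated by pseudo-reflections'' in the appropriate scheme-theoretic sense, such that $U_i':=U_i/H_i$ is again smooth, $G_i':=G_i/H_i$ acts on $U_i'$ with unchanged coarse space $X_i$, and the stabilizers of $G_i'$ at geometric points contain no pseudo-reflections. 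This is a Chevalley--Shephard--Todd type statement for finite linearly reductive group schemes, and it is here, and essentially nowhere else, that positive characteristic forces genuinely new scheme-theoretic input. Granting it, since $X$ is normal its singular locus has codimension $\geq 2$, and a local computation of rings of invariants shows that, once pseudo-reflections have been removed, the coarse map $\X_i:=[U_i'/G_i']\to X_i$ is an isomorphism precisely over $X_i^{sm}$; it is smooth because $U_i'$ is, and tame because $G_i'$ is linearly reductive (\cite{aov}).

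For (b) the essential point is a uniqueness statement: a smooth tame stack $\mathcal Y$ with normal scheme coarse space $T$, whose coarse map is representable and an isomorphism over $T^{sm}$, is unique up to unique isomorphism over $T$. I would obtain this by exhibiting such a $\mathcal Y$ as the terminal object in the category of smooth tame stacks equipped with a representable morphism to $T$ that is an isomorphism over $T^{sm}$: over each geometric point the stabilizer of $\mathcal Y$ is forced to be the local fundamental group scheme acting on the cotangent space, a pseudo-reflection-free linear action of a linearly reductive group scheme is rigid, and the identity of $T^{sm}$ extends, by a Zariski--Nagata purity argument for tame stacks (the complement of $T^{sm}$ having codimension $\geq 2$), uniquely to an isomorphism between any two such $\mathcal Y$'s. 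With this in hand, uniqueness makes the charts $\X_i$ into a canonical \'etale descent datum (applied over the normal scheme $X_i\times_X X_j$, whose smooth locus is the preimage of $X^{sm}$); since algebraic stacks form a stack for the \'etale topology, the datum is effective, giving $\X\to X$. Finally, smoothness and tameness of $\X$, and the assertion that $\X\times_X X^{sm}\to X^{sm}$ is an isomorphism, are \'etale-local and hence inherited from the $\X_i$, while coarse-space formation for tame stacks commutes with the base change $X_i\to X$ (\cite{aov}), so $X$ is indeed the coarse space of $\X$.

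The main obstacle is the pseudo-reflection step in (a): one must formulate and prove the correct analogue of the Chevalley--Shephard--Todd theorem for finite linearly reductive group schemes in positive characteristic, and it is the rigidity of the resulting pseudo-reflection-free representations that in turn powers the uniqueness and gluing in (b).
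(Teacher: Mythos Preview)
The paper does not give its own proof of this theorem: it is quoted verbatim as \cite[Theorem 1.10]{cst} and used as a black box, with the subsequent remark that the resulting stacky resolution is characterized by a universal property \cite[Lemma 5.5]{cst}. So there is nothing in the present paper to compare your argument against.

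That said, your sketch is very much in the spirit of \cite{cst}, whose title is precisely ``The Chevalley--Shephard--Todd theorem for finite linearly reductive group schemes'': the strategy of (i) passing to a faithful action, (ii) quotienting out the pseudo-reflection subgroup to obtain a smooth pseudo-reflection-free chart, and (iii) gluing via a universal property, is the approach taken there, and the universal property you aim for in (b) is exactly the content of \cite[Lemma 5.5]{cst} alluded to after the statement. You have correctly located the crux of the matter in the Chevalley--Shephard--Todd step for non-\'etale linearly reductive group schemes; this is genuinely the new input in positive characteristic and is the main theorem of \cite{cst}. Your uniqueness argument in (b) is a bit breezy---the phrases ``local fundamental group scheme'' and ``Zariski--Nagata purity for tame stacks'' would each need to be made precise in this generality, and rigidity of pseudo-reflection-free linear actions is not quite the right formulation---but these are exactly the technical points that \cite{cst} supplies, so as a roadmap your proposal is sound.
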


We refer to the above coarse space map $\pi:\X\to X$ as the \emph{stacky resolution} of $X$. 
It is characterized by a universal property (see \cite[Lemma 5.5]{cst}), and so we speak of ``the'' stacky resolution.

\begin{Lemma}
\label{l:affine-stack-defs}
Let $A$ be a complete Noetherian local ring with residue field $k$. Let $G$ be a smooth affine linearly reductive group scheme over $k$ which acts on a smooth affine $k$-scheme $U$, and let $\X=[U/G]$.  Then
\[
\Ext^n(L_{\X/k},\OO_\X)=0
\]
for $n>0$, and so $\X$ has a lift to $A$.
\end{Lemma}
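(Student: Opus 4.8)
The plan is to combine smoothness of $\X$ with linear reductivity of $G$ to kill the relevant cotangent cohomology, and then to lift $U$ and $G$ separately and assemble the lift. First I would address the vanishing statement. Since $U$ is smooth over $k$ and $G$ is smooth over $k$, the quotient stack $\X=[U/G]$ is smooth over $k$; hence the cotangent complex $L_{\X/k}$ is (quasi-isomorphic to) a locally free sheaf in degree $0$, namely $\Omega^1_{\X/k}$. Therefore $\Ext^n(L_{\X/k},\OO_\X)=H^n(\X,\Theta_\X)$ for all $n$. Now $\Theta_\X$ is a quasi-coherent sheaf on $\X$, and since $\X$ is a tame stack (being the quotient of a smooth affine scheme by a linearly reductive group scheme, by \cite[Theorem 3.2]{aov}), pushing forward to the coarse space $Y=\Spec A_0$ (with $A_0=\OO_U^G$) is exact, so $H^n(\X,\Theta_\X)=H^n(Y,\pi_*\Theta_\X)$. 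But $Y$ is affine, so the higher cohomology vanishes for $n>0$. This gives $\Ext^n(L_{\X/k},\OO_\X)=0$ for $n>0$, which in particular means $\X$ is unobstructed and has a unique lift to every $A$ in the deformation-theoretic sense.

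Next I would produce the candidate lift $[U'/G']$. By smoothness of the affine schemes $U$ and $G$ over $k$, each admits a unique (up to unique isomorphism) flat formal lift to $A$; call them $U'$ and $G'$. Because $G$ is a group scheme, its multiplication, unit and inverse morphisms lift uniquely as well — here one uses that the obstruction and ambiguity for lifting a morphism between smooth affine $A$-schemes live in cohomology groups that vanish (the schemes are affine), so the group-scheme axioms, being equalities of morphisms, propagate to the lift; thus $G'$ is canonically a group scheme over $A$, and it is again smooth affine and linearly reductive (linear reductivity deforms, cf.\ \cite{aov}). Similarly the action morphism $G\times_k U\to U$ lifts uniquely to $G'\times_A U'\to U'$, and the action axioms (associativity, unitality) hold for the lift by the same affineness-plus-uniqueness argument. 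Forming $[U'/G']$ then yields a smooth tame Artin stack over $A$ whose reduction mod the maximal ideal is $[U/G]=\X$, i.e.\ a flat formal lift of $\X$ to $A$.

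Finally I would argue uniqueness: any lift of $\X$ to $A$ is isomorphic to $[U'/G']$. This follows from the vanishing $\Ext^n(L_{\X/k},\OO_\X)=0$ for $n>0$ established above — the $n=2$ vanishing says deformations are unobstructed and the $n=1$ vanishing says they are rigid (the tangent space to the deformation functor is $\Ext^1(L_{\X/k},\OO_\X)=0$), so $\Def_\X$ is trivial and every lift over $A$ agrees with the constructed one. I expect the main obstacle to be the bookkeeping in the second paragraph: carefully checking that lifting $U$, $G$, the action, and the group structure can all be done compatibly and uniquely, so that $[U'/G']$ is genuinely a stack over $A$ (rather than just fiberwise a stack). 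The deformation-theoretic input — smoothness of $\X$ plus tameness plus affineness of the coarse space — is the conceptual heart, but the verification that the quotient construction commutes with this lifting is where the care is needed; one can streamline it by invoking the uniqueness of lifts of smooth affine schemes and of morphisms between them, so that every structure morphism and every axiom transports automatically.
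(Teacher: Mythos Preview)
Your overall strategy---use linear reductivity to show $\X$ is cohomologically affine and then kill higher cohomology---is the same as the paper's, and your treatment of lifting $U$, $G$, the group law, and the action is essentially identical to the paper's. There is, however, one genuine gap in the vanishing step.

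You assert that since $\X=[U/G]$ is smooth over $k$, the cotangent complex $L_{\X/k}$ is a locally free sheaf concentrated in degree $0$. This is true for schemes and for Deligne--Mumford stacks, but it is \emph{false} for Artin stacks with positive-dimensional stabilizers, which is exactly the situation here: the lemma allows $G$ to be any smooth affine linearly reductive group scheme (and in the applications it is typically $GL_n$). For such $G$ the cotangent complex $L_{BG/k}$ is a locally free sheaf in degree~$1$ (dual to the Lie algebra), and the exact triangle
\[
h^*L_{BG/k}\,\to\,L_{\X/k}\,\to\,L_{\X/BG}
\]
for $h:\X\to BG$ shows that $L_{\X/k}$ is a two-term perfect complex in degrees $[0,1]$, not a single sheaf. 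Consequently your identification $\Ext^n(L_{\X/k},\OO_\X)=H^n(\X,\Theta_\X)$ is not valid.

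The fix is precisely what the paper does: from the triangle above one deduces $\mathcal{E}xt^n(L_{\X/k},\OO_\X)=0$ for $n>0$ (since $R\mathcal{H}om(L_{\X/k},\OO_\X)$ sits in degrees $[-1,0]$), and then your cohomological-affineness argument (tameness plus affine coarse space) gives $\Ext^n(L_{\X/k},\OO_\X)=H^0(\X,\mathcal{E}xt^n(L_{\X/k},\OO_\X))=0$. So the repair is local: replace the incorrect sentence about $L_{\X/k}$ being a sheaf in degree~$0$ by the amplitude bound via $BG$, and the rest of your argument goes through unchanged.
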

\begin{proof}
Since $\X=[U/G]$, we have a cartesian diagram
\[
\xymatrix{
U\ar[r]\ar[d] & \Spec k\ar[d]^g\\
\X\ar[r]^-{h} & BG
}
\]
This shows that $h$ is smooth and representable. Hence, $L_{\X/BG}$ is a locally free sheaf. From the exact triangle 
\[
g^*L_{BG/k}\to L_{k/k}\to L_{k/BG},
\]
and the fact that $L_{k/k}=0$, we see $g^*L_{BG/k}=L_{k/BG}[-1]$. Since $g$ is smooth and representable, 
$L_{k/BG}$ is isomorphic in the derived category to a locally free sheaf concentrated in degree $0$, 
and so $L_{BG/k}$ is a locally free sheaf concentrated in degree $1$. Using the exact triangle
\[
h^*L_{BG/k}\to L_{\X/k}\to L_{\X/BG},
\]
we see ${\mathcal E}xt^n(L_{\X/k},\OO_\X)=0$ for $n>0$. Since $U$ is affine, $h$ is as well. Since $G$ is linearly reductive, $BG$ is cohomologically affine over $k$, and so composing with $h$, we see that $\X$ is cohomologically affine over $k$, see Definition 3.1 and Proposition 3.9(i) of \cite{gms}. Therefore, for $n>0$, we have
\[
\Ext^n(L_{\X/k},\OO_\X)=H^0({\mathcal E}xt^n(L_{\X/k},\OO_\X))=0,
\]
as desired.
\end{proof}

\begin{Proposition}
\label{prop:stack-def}
Let $X$ and $\X$ be as in Theorem \ref{thm:canstack}, and let $A$ be a complete Noetherian local ring with residue field $k$. 
If $\X'$ is a formal lift of $\X$ to $A$, then $\X'$ has a coarse space $X'$, which is a formal lift of $X$.
\end{Proposition}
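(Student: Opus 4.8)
The plan is to construct the coarse space $X'$ \'etale-locally on $X$ and then descend. Since $\X'$ is a lift of $\X$ to $A$, the formal scheme $\X'$ is a flat formal algebraic stack over $\Spf A$ whose reduction is $\X$, and in particular $\X'$ is still tame with finite diagonal, so by the Keel--Mori-type existence theorem for coarse spaces of tame stacks (applied levelwise to each $\X'\otimes_A A/\idealm_A^n$, where $\idealm_A$ is the maximal ideal of $A$) there is a formal coarse space $\pi':\X'\to X'$. The key point is that the formation of coarse spaces of tame stacks commutes with arbitrary base change (this is precisely where tameness, as opposed to mere finiteness of diagonal, is needed: for tame stacks $\pi'_\ast$ is exact and $\OO_{X'}=\pi'_\ast\OO_{\X'}$ formation commutes with base change), so $X'\otimes_A k \cong X$ as the coarse space of $\X$.

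Next I would verify that $X'$ is a flat lift of $X$, i.e.\ that $X'$ is flat over $A$. Working \'etale-locally on $X$ we may, by Theorem \ref{thm:canstack} and the local structure of linearly reductive quotient singularities, assume $\X = [U/G]$ with $U$ a smooth affine $k$-scheme and $G$ a finite linearly reductive group scheme acting on $U$. By Lemma \ref{l:affine-stack-defs}, $\Ext^n(L_{\X/k},\OO_\X)=0$ for $n>0$, so $\X$ has a \emph{unique} lift to $A$, namely $[U'/G']$ where $U'$ and $G'$ are the canonical (unique) lifts of $U$ and $G$; hence $\X'\cong[U'/G']$ locally. The coarse space of $[U'/G']$ is $\Spec (\OO_{U'})^{G'}$, and since $G'$ is linearly reductive over $A$ and $U'$ is flat over $A$, taking $G'$-invariants is exact and commutes with the base change $A\to k$, giving $(\OO_{U'})^{G'}\otimes_A k = (\OO_U)^{G}$. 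Flatness of $(\OO_{U'})^{G'}$ over $A$ then follows since it is a direct summand (as a $G'$-representation via the Reynolds operator) of the flat $A$-module $\OO_{U'}$. Thus $X'$ is \'etale-locally a flat lift of $X$, and flatness is \'etale-local, so $X'$ is a flat lift of $X$ globally.

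Finally, to globalize: the local coarse spaces $\Spec(\OO_{U'})^{G'}$ agree on overlaps by the uniqueness in Lemma \ref{l:affine-stack-defs} (the lift of $\X$, and hence the presentation $[U'/G']$, is unique up to unique isomorphism, so the induced identifications of invariant rings are canonical and satisfy the cocycle condition), so they glue to a scheme $X'$ over $A$ with a map $\pi':\X'\to X'$; by construction this is the coarse space map, and we have shown $X'$ is flat over $A$ with $X'\otimes_A k\cong X$ compatibly with $\pi$. This completes the proof.

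The main obstacle I anticipate is not any single hard computation but rather correctly invoking base change for coarse spaces in the \emph{formal} (pro-Artinian) setting — one must check the coarse space of the formal stack $\X'=\varinjlim \X'_n$ is the formal scheme $\varinjlim X'_n$ and that $\OO_{X'_n}$ forms a flat inverse system, which is exactly the content of tameness plus the local description $[U'/G']$; once the problem is reduced \'etale-locally to linearly reductive invariants over $A$, everything is formal.
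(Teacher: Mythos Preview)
Your argument is essentially correct, but it is far more elaborate than the paper's. The paper's proof is three sentences: the diagonal of $\X'$ is a deformation of the finite diagonal of $\X$, hence finite, so Keel--Mori gives a coarse space $X'$; then \cite[Corollary 3.3(b)]{aov} gives flatness of $X'$ over $A$ directly from flatness of $\X'$, and \cite[Corollary 3.3(a)]{aov} gives $X'\times_A k = X$. In other words, the two facts you work hardest for---base change of the coarse space and flatness---are precisely the content of the cited corollary in Abramovich--Olsson--Vistoli for tame stacks, and the paper simply invokes it. What you do instead is reprove these results by hand: you pass \'etale-locally to a presentation $[U'/G']$ (via Lemma~\ref{l:affine-stack-defs}), and then use the Reynolds operator to exhibit $(\OO_{U'})^{G'}$ as a direct summand of the flat module $\OO_{U'}$. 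That is a valid argument, and it has the merit of being self-contained, but it duplicates the proof of \cite[Corollary 3.3]{aov}. Two small points: first, your third paragraph on gluing is redundant, since you already obtained $X'$ globally from Keel--Mori in the first paragraph---flatness and base change are local properties, so once they are checked \'etale-locally you are done. Second, you write ``finite linearly reductive group scheme'' when invoking Lemma~\ref{l:affine-stack-defs}, but that lemma is stated for \emph{smooth} affine linearly reductive $G$; the paper handles this by citing \cite[Proposition~5.2]{cst} to obtain a presentation with smooth $G$, and you should do the same.
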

\begin{proof}
Since the diagonal of $\X'$ is a deformation of the diagonal of $\X$, it is finite, and so $\X'$ has a coarse space $X'$ by \cite{km}.  Since $\X'$ is flat over $A$, \cite[Corollary 3.3(b)]{aov} shows that $X'$ is as well.  Lastly, \cite[Corollary 3.3(a)]{aov} shows that $X=X'\times_A k$, and so $X'$ is a lift of $X$.
\end{proof}

\subsection{Positive results}
\label{subsec:positive-stacks}

Throughout this subsection, we fix a complete Noetherian local ring $A$ with maximal ideal $\idealm$ and perfect residue field $k$. We fix a surface $X$ over $k$, and let 
$$
f\,:\,Y\,\to\,X
$$
be its minimal resolution of singularities.  We assume that $X$ has 
canonical singularities that are linearly reductive quotient singularities (see Proposition \ref{canonical linearly reductive} and 
Remark \ref{linearly reductive remark} for a complete list of canonical surface singularities with this property). Lastly, we let
$$
\pi\,:\,\X\,\to\,X,
$$
be the stacky resolution of Theorem \ref{thm:canstack}.

We now prove Theorem \ref{thm:main-stacks}(\ref{stacks:X==>Y}).

\begin{Theorem}
\label{thm:X==>YoverA}
If $\X$ lifts formally to $A$, then $Y$ does as well.
\end{Theorem}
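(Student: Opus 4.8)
The goal is to show that liftability of the smooth tame stack $\X$ to $A$ forces liftability of its minimal resolution $Y$. The overall strategy is to realize $Y$ as a blow-up of $\X$ along a lifted center and then invoke the Burns--Wahl blow-down machinery (Proposition \ref{burns-wahl}) together with Proposition \ref{prop:stack-def}. Concretely, suppose $\X'$ is a lift of $\X$ to $A$. By Proposition \ref{prop:stack-def}, $\X'$ has a coarse space $X'$, which is a lift of $X$. So we get for free a lift of the singular surface $X$; the content is to produce a lift of the \emph{resolution} $Y$.

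\textbf{Step 1: Relate $Y$ to a blow-up of $\X$.} First I would recall (or establish) that there is a birational morphism $g\colon Y \to \X$ realizing $Y$ as the minimal resolution factored through the stacky resolution, and that $g$ is obtained by blowing up a closed substack $\mc{Z} \subset \X$ supported over the (stacky points lying over the) singular locus of $X$. Since $\X$ is smooth of dimension $2$ and the relevant centers are (étale-locally) smooth, $g$ is a composition of blow-ups in smooth centers on a smooth $2$-dimensional stack; each such blow-up is again smooth. The key cohomological input, exactly as in the proof of Proposition \ref{higher lifting prop}(3), is that for each exceptional divisor $\mc{E}$ the normal bundle restricts to $\OO(-1)$ on the $\PP^1$-fibers, so $H^1(N_{\mc{E}/Y})=0$ (via the Grothendieck--Leray spectral sequence on the stack, or working étale-locally), hence $\mc{E}$ lifts to a closed substack $\mc{E}' \subset Y'$ once $Y'$ is built; but in fact I want to go the other direction.

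\textbf{Step 2: Lift the center and blow up.} The cleaner route: working étale-locally on $\X$ via the presentation $\X = [U/G]$ of Lemma \ref{l:affine-stack-defs}, the lift $\X'$ is $[U'/G']$ with $U',G'$ the unique lifts of $U,G$. The center $\mc{Z}$ of the blow-up $g$ corresponds to a $G$-invariant closed subscheme of $U$ (étale-locally, the origin, or the union of fixed loci), which is itself smooth; by the uniqueness/rigidity of lifts of smooth affine schemes and the fact that $G'$ acts on $U'$ lifting the $G$-action, this center lifts to a $G'$-invariant smooth closed subscheme $\mc{Z}' \subset \X'$. These local lifts of the center are canonical, hence glue to a global closed substack $\mc{Z}' \subset \X'$ lifting $\mc{Z}$. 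Then $Y' := \mathrm{Bl}_{\mc{Z}'} \X'$ is flat over $A$ (blow-up of a smooth center in a smooth stack, so the blow-up commutes with the base change $A \to k$) and reduces modulo $\idealm$ to $\mathrm{Bl}_{\mc{Z}} \X = Y$. Hence $Y'$ is a lift of $Y$ to $A$.

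\textbf{Where the difficulty lies.} The main obstacle is Step 1/Step 2's geometric input: one must know that $Y \to \X$ really is (étale-locally) a sequence of blow-ups in smooth $G$-invariant centers on a smooth surface stack, with the right exceptional geometry — this is essentially the positive-characteristic stacky McKay-type picture, and for wild $A_n$-singularities one should double-check that the stacky resolution (a $\mu_{n}$-gerbe-flavored object) still resolves via such blow-ups. One also needs that the local lifts of the center are unique enough to glue; this follows from the uniqueness of deformations of smooth affine schemes with group action (Lemma \ref{l:affine-stack-defs}), but the gluing on the overlaps should be spelled out. A subtler alternative, avoiding explicit blow-ups, is: lift $\X$ to $\X'$, pass to the coarse space $X'$ (a lift of $X$), and then argue that a minimal resolution of $X'$ exists and is flat over $A$ with closed fiber $Y$ — but minimal resolutions need not behave well in families over $A$ unless $A = W_n(k)$ with the singularities controlled, so I would favor the explicit-blow-up route. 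I expect the write-up to reduce to: (i) describe $g\colon Y\to\X$ as iterated blow-ups in smooth centers; (ii) lift the centers using Lemma \ref{l:affine-stack-defs} and glue; (iii) observe flatness of the blow-up is preserved, concluding $Y' \times_A k = Y$.
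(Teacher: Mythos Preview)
Your Step~1 contains a genuine gap: there is in general no morphism $g\colon Y\to\X$ extending the isomorphism over the smooth locus, let alone one realizing $Y$ as a blow-up of $\X$ in smooth centers. Already for the $A_1$-singularity $\X=[\Aff_k^2/\mu_2]$, a morphism $Y\to\X$ over $X$ would amount to a $\mu_2$-torsor on $Y$ restricting to the nontrivial torsor $\Aff_k^2\setminus\{0\}\to X^{sm}$ on $Y\setminus E$; but $H^1_{\rm fppf}(Y,\mu_2)=0$ (since $\Pic(Y)\cong\ZZ$ is torsion-free and $k$ is algebraically closed), so no such extension exists. More conceptually, blowing up a smooth two-dimensional tame stack at a stacky point does not produce a scheme: the stabilizer acts on the projectivized tangent space, and the exceptional $\PP^1$ still carries nontrivial inertia (for $A_1$ it is even all of $\PP^1\times B\mu_2$). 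Thus the picture of $Y$ as an iterated blow-up of $\X$ is incorrect, and Step~2 has nothing to stand on.

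The paper proceeds differently and uses the stack only indirectly. One writes $f\colon Y\to X$ (to the \emph{coarse space}) as the blow-up of $X$ along a closed subscheme $Z$ supported on the singular points; this $Z$ is a zero-dimensional Artinian scheme, not a smooth center. Given the lift $\X'$ with coarse space $X'$, it suffices to lift $Z$ to a flat closed subscheme $Z'\subset X'$, after which $Y':={\rm Bl}_{Z'}X'$ is the desired lift. The obstruction to lifting $Z$ is local, so one passes to the \'etale-local presentation $\X=[U/G]$, pulls $Z$ back to $\widetilde{Z}\subset U$, and uses that $U$ is smooth affine to lift $\widetilde{Z}$ (the relevant ${\rm Ext}^2$ vanishes by \cite[Corollary~8.5]{hartshorne}). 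Linear reductivity of $G$ lets this lift descend to $\Z'\subset\X'$, and good-moduli-space theory pushes it down to the required $Z'\subset X'$. Your ``subtler alternative'' at the end---pass to the coarse space $X'$ and resolve---is in fact much closer to what is needed; the idea you were missing is that the blow-up center can be lifted \emph{via} the stack, rather than by mapping $Y$ into $\X$ or by resolving $X'$ abstractly.
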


\begin{proof}
Let $\X'$ be a lift of $\X$ and let $X'$ be its coarse space. By Proposition \ref{prop:stack-def}, $X'$ is a lift of $X$. We show that the morphism $f:Y\to X$ lifts over $X'$. Since $Y$ is the blow-up of $X$ along a closed scheme $Z$, it is enough to lift $Z$ to a closed subscheme $Z'\subset X'$ with $Z'$ flat over $A$. Indeed, the blow-up of $X'$ along $Z'$ is then flat over $A$ and reduces to $Y$ over $k$.

Note that $Z$ is supported on the singular locus $X^{sing}$ of $X$. Note that 
$X^{sing}$ is a disjoint union of points, as $X$ is normal. Hence,
\[
\Ext^2(L_{Z/X},\OO_X)=\bigoplus_{x\in X^{sing}} {\mathcal E}xt^2(L_{Z/X},\OO_X)_x.
\]
For any \'etale neighborhood $Z_x$ of $Z$ about $x\in X^{sing}$, the obstruction to lifting $Z$ maps to the obstruction to lifting $Z_x$ under the map $\Ext^2(L_{Z/X},\OO_X)\to \Ext^2(L_{Z_x/X},\OO_X)={\mathcal E}xt^2(L_{Z/X},\OO_X)_x$. It therefore suffices to look \'etale locally about each singularity. 
By \cite[Proposition 5.2]{cst}, we can therefore assume that $\X=[U/G]$ with $U$ smooth affine and $G$ a smooth affine linearly reductive group scheme over $k$. By Lemma \ref{l:affine-stack-defs}, there is a lift $\X'$ of $\X$ to $A$.

Let $\Z\subset\X$ be the pullback of $Z\subset X$. It suffices to show that $\Z$ lifts to a closed substack $\Z'$ of $\X'$. Indeed, Lemma 4.14 and Theorem 4.16(ix) of \cite{gms} show that $\Z'$ has a good moduli space $Z'$ which is flat over $A$ with $Z'\subset X'$ a closed subscheme. Proposition 4.7 and Theorem 6.6 of \cite{gms} then show that $Z'$ is a lift of $Z$.

Let $\widetilde{Z}\subset U$ be the pullback of $Z\subset X$. Since $\Z=[\widetilde{Z}/G]$ and $G$ is linearly reductive, the natural map
\[
\Ext^2(L_{\Z/BG},\OO_{\Z})\to \Ext^2(L_{\widetilde{Z}/k},\OO_{\widetilde{Z}})
\]
identifies $\Ext^2(L_{\Z/BG},\OO_{\Z})$ with $\Ext^2(L_{\widetilde{Z}/k},\OO_{\widetilde{Z}})^G$.
Since the obstruction to lifting $\Z$ over $BG'$ maps to the obstruction to lifting $\widetilde{Z}$, it suffices to show that $\widetilde{Z}$ lifts. This follows from \cite[Corollary 8.5]{hartshorne}.
\end{proof}

We now turn to Theorem \ref{thm:main-stacks}(\ref{stacks:Y==>X}).

\begin{Theorem}
If $X$ has no wild $A_n$-singularities and $Y$ lifts formally to $A$, then $\X$ lifts to $A/\idealm^2$.
\end{Theorem}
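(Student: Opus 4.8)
The goal is to lift $\X$ to $A/\idealm^2$, that is, to produce a first-order deformation of $\X$ over each square-zero thickening, given a lift $Y'$ of $Y$ to $A$ (and hence, by restriction, a first-order lift $Y_1$ of $Y$). Since $\X$ is smooth and tame, $\mc{E}xt^i(L_{\X/k},\OO_\X)=0$ for $i>0$ by (the local part of) Lemma \ref{l:affine-stack-defs}, so the deformation theory of $\X$ over $A/\idealm^2$ is unobstructed and controlled by the single group $\Ext^1(L_{\X/k},\OO_\X)=H^1(\X,\Theta_\X)$; there is nothing to check beyond the \emph{existence} of a class, once we know the relevant cohomology is nonzero in the right way. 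So the real content is to relate $H^1(\X,\Theta_\X)$ to $H^1(Y,\Theta_Y)$ and to the deformation functor of the singularity $X$ via the Burns--Wahl blow-down.

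\textbf{Step 1: Reduce to a local, equivariant statement.} As in the proof of Theorem \ref{thm:X==>YoverA}, work \'etale locally around each singular point of $X$: by \cite[Proposition 5.2]{cst} we may assume $\X=[U/G]$ with $U$ smooth affine and $G$ smooth affine linearly reductive, and that $Y\to X$ is the minimal resolution of a single canonical (linearly reductive quotient, non-wild-$A_n$) singularity. Since $G$ is linearly reductive, $H^1(\X,\Theta_\X)=H^1(U,\Theta_U)^G$ and, more to the point, deforming $\X$ is the same as $G$-equivariantly deforming $U$; because $U$ is smooth affine this is unobstructed and the lift $U'$ of Lemma \ref{l:affine-stack-defs} is unique, giving the unique lift $\X'$ of $\X$ to $A/\idealm^2$ directly. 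Thus in the local picture the lift \emph{always} exists; what must be checked globally is that these local lifts glue, i.e. that the obstruction to globalizing — which lives in an $H^1$ or $H^2$ of a sheaf of local deformation data over $X$ — vanishes. This is where the non-wildness hypothesis enters.

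\textbf{Step 2: Compare with the blow-down of $Y'$ using Proposition \ref{prop:deformation canonical singularity}.} The lift $Y'$ of $Y$ to $A$ restricts to a first-order deformation $Y_1/\mathrm{Spec}(A/\idealm^2)$, classified by an element of $H^1(Y,\Theta_Y)$. By Proposition \ref{dimension cohomology}, $\dim_k H^1_E(\Theta_Y)=\dim_k H^1(\Theta_Y)=n$, and in the non-wild case Proposition \ref{prop:deformation canonical singularity}(2) says $\alpha\colon H^1_E(\Theta_Y)\to H^1(\Theta_Y)$ is an isomorphism and the tangent map $\beta$ to the Burns--Wahl blow-down is zero. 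Hence every first-order deformation of $Y$ blows down to a \emph{locally trivial} first-order deformation of $X$ — equivalently, it is obtained by gluing together the trivial local deformations along $Y\setminus E$. Via the stacky resolution $\pi\colon\X\to X$, which is an isomorphism away from the singular points and over which the formation of coarse spaces is compatible with the deformation theory (Proposition \ref{prop:stack-def}), the same gluing data produces a first-order deformation $\X_1$ of $\X$: on the smooth locus it agrees with $Y_1$, and over each singular point it is the unique local lift $[U'/G']$ from Step 1. The vanishing of $\beta$ is precisely what guarantees these two descriptions are compatible on the overlap, so the gluing is consistent.

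\textbf{Step 3: Assemble and identify $\X_1$ as the desired lift.} One checks $\X_1$ is flat over $A/\idealm^2$ (flatness is local and holds by construction), is a genuine algebraic stack (finite diagonal deforms), and restricts to $\X$ over $k$; by Proposition \ref{prop:stack-def} its coarse space is the blow-down $X_1$ of $Y_1$, consistent with everything in sight. \textbf{The main obstacle} is Step 2: making precise the sense in which ``$\beta=0$'' lets one transport the gluing data for $Y_1$ across the birational modification to build $\X_1$, i.e. identifying the correct sheaf on $X$ whose $H^1$ classifies first-order lifts of $\X$ and showing the image of $[Y_1]$ under Burns--Wahl lands in the subspace coming from $\pi$. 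Concretely this amounts to comparing $H^1_E(Y,\Theta_Y)$ with the stacky local cohomology $H^1_{\X^{sing}}(\X,\Theta_\X)$ and checking that the isomorphism $\alpha$ of Proposition \ref{prop:deformation canonical singularity} matches them up; once that bookkeeping is done, the rest is formal deformation theory of smooth tame stacks.
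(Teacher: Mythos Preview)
You have the two correct inputs --- local rigidity of the stacky chart (Lemma~\ref{l:affine-stack-defs}) and $\beta=0$ in the non-wild case (Proposition~\ref{prop:deformation canonical singularity}) --- but the organizing principle is off, and the ``main obstacle'' you isolate is an artifact of that. The paper does not try to glue \emph{absolute} deformations of $\X$. It first blows $Y'$ down to a lift $X'$ of $X$ (Proposition~\ref{burns-wahl}) and then lifts $\pi:\X\to X$ \emph{relative to} $X'$. The controlling groups for this relative problem are $\Ext^n(L_{\X/X},\OO_\X)$: tameness collapses $R\pi_*$ to $\pi_*$ on the sheaves $\mc{E}xt^n(L_{\X/X},\OO_\X)$, and since $\pi$ is an isomorphism over $X^{sm}$ these sheaves are skyscrapers on the singular set. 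Hence the global $\Ext$'s are just $H^0$ of skyscrapers and the relative lifting question is genuinely \'etale-local --- there is no gluing cocycle to chase. Locally, one only has to check that the given $X'$ coincides with the coarse space $X''$ of the unique local lift $\X'$; both are the \emph{trivial} local deformation of the singularity (the former because $\beta=0$, the latter because locally $\Ext^1(L_{\X/k},\OO_\X)=0$), so $X'=X''$ and one is done. Your proposed comparison of $H^1_E(Y,\Theta_Y)$ with a stacky local cohomology group never enters.

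There is also a separate error in your Plan. The local vanishing $\mc{E}xt^i(L_{\X/k},\OO_\X)=0$ for $i>0$ only gives $\Ext^i(L_{\X/k},\OO_\X)=H^i(\X,\Theta_\X)$ via the local-to-global spectral sequence; it does not force $H^2(\X,\Theta_\X)=0$. When $A=W(k)$, the thickening $k\hookrightarrow A/\idealm^2=W_2(k)$ is not a $k$-algebra, so there is a genuine obstruction class in $H^2(\X,\Theta_\X)$ which can be nonzero. Indeed, Theorem~\ref{non liftable K3} produces exactly such an $\X$ (with wild $A_1$-singularities) that does not lift to $W_2(k)$, so your blanket unobstructedness claim is false and the hypothesis on $Y$ is doing real work.
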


\begin{proof}
Let $Y'$ be a lift of $Y$ to $A/\idealm^2$, and let $X'$ be the deformation induced by $Y'$, as in Proposition \ref{burns-wahl}. We show that $\X$ lifts over $X'$. Since $\mathcal{E}xt^n(L_{\X/X},\OO_\X)$ is coherent for all $n$, it follows from Definition \ref{def:tame} that 
\[
R\pi_*R\mathcal{H}om(L_{\X/X},\OO_\X)=\pi_*R\mathcal{H}om(L_{\X/X},\OO_\X).
\]
As $\pi$ is an isomorphism over $X^{sm}$, we see that $\pi_*\mathcal{E}xt^n(L_{\X/X},\OO_\X)$ is supported on the singular locus of $X$, which is a disjoint union of points.  As a result, 
\[
\Ext^n(L_{\X/X},\OO_\X)=H^0(\pi_*\mathcal{E}xt^n(L_{\X/X},\OO_\X))=H^0(\mathcal{E}xt^n(L_{\X/X},\OO_\X)).
\]
Hence, the obstruction to lifting $\X$ over $X'$ is a global section of the sheaf $\mathcal{E}xt^2(L_{\X/X},\OO_\X)$. 
To show $\X$ lifts over $X'$, it therefore suffices to look \'etale locally on $X$. By \cite[Proposition 5.2]{cst}, we can assume $\X=[U/G]$ with $U$ affine and $G$ a smooth affine linearly reductive group scheme over $k$. 
Let $\X'$ be a lift of $\X$ over $A$, which exists by Lemma \ref{l:affine-stack-defs}.
By Proposition \ref{prop:stack-def}, the coarse space $X''$ of $\X'$ is a lift of $X$ to $A$. Hence, it suffices to show $X'$ is isomorphic to $X''$.

By the proof of Theorem \ref{thm:X==>YoverA}, there is a lift $Y''$ of $Y$ whose induced deformation of $X$ is $X''$. The map $\beta:H^1(Y,\Theta_Y)\to \Ext^1(L_{X/k},\OO_X)$ sends the class $[Y']-[Y'']$ to $[X']-[X'']$. Since $\beta=0$ by Proposition \ref{prop:deformation canonical singularity}, we see $X'$ is isomorphic to $X''$, as desired.
\end{proof}


\subsection{Counter-examples}
\label{subsec:counter-ex-stacks}

In this subsection, we prove Theorem \ref{thm:main-stacks}(\ref{stacks:negative}), thereby showing that the lifting results of \S\ref{subsec:positive-stacks} are sharp. 

\begin{Theorem}
 \label{non liftable K3}
 Over every algebraically closed field $k$ of characteristic $p=2$, there exists
 \begin{enumerate}
  \item a K3 surface $X$ with (wild) $A_1$-singularities that lifts projectively to $W(k)$ such that
  \item every smooth model of $X$ lifts formally to $W(k)$, whereas
  \item the stacky resolution $\X$ does not lift to $W_2(k)$.
 \end{enumerate}
\end{Theorem}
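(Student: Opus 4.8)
The plan is to exhibit an explicit singular K3 surface $X$ in characteristic $2$, produced as a purely inseparable cover or as a quotient construction, whose $A_1$-singularities are the \emph{wild} ones (the action of $\mu_2$ on $\Aff_k^2$ being replaced by the non-linearly-reductive phenomenon governed by $p \mid n+1$ with $n=1$). A natural source is a supersingular or quasi-elliptic K3 surface: for instance, one can take a rational elliptic (or quasi-elliptic) surface over $\PP^1_{W(k)}$ and form a suitable fiber-product/quotient whose special fiber acquires $A_1$-singularities of the wild type, or, more directly, use a $\mu_2$-cover of $\PP^2$ (a "double plane" $z^2 = f(x_0,x_1,x_2)$ with $\deg f = 6$) degenerating so that the only singularities are wild $A_1$'s. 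The first task is thus to write down such an $X$ together with a projective lift of it to $W(k)$: as in the proof of Theorem \ref{thm:main2-surfaces}, the cover structure $z^2 - f$ (or the relevant quotient datum) lifts verbatim over $W(k)$, and since $H^2(\OO_X)$ behaves as for a K3 we may algebraize and projectivize the formal lift via Grothendieck existence together with lifting of an ample class; this gives item (1).

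For item (2), the minimal resolution $Y$ of $X$ is a smooth K3 surface over $k$. Every K3 surface in any characteristic lifts to $W(k)$ — this is the Deligne(–Ogus) result on liftability of K3 surfaces (projectively, after a finite extension, but a formal lift to $W(k)$ always exists) — and since any two smooth models of a smooth proper surface are related by blow-ups at points, Proposition \ref{smooth surfaces} shows \emph{every} smooth model of $X$ lifts to $W(k)$. (If one wants a projective lift of $Y$ itself, one invokes lifting of a polarization, using $H^2(\OO_Y)=H^2(\OO_{\text{K3}})$ and the standard obstruction argument.) So (2) is immediate from the smooth-surface theory already recalled in \S\ref{sec:dimension two}.

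The heart of the theorem, and the main obstacle, is item (3): $\X$ does not lift even to $W_2(k)$. Here I would argue by contradiction using the deformation-theoretic machinery of \S\ref{sec:duval}. Suppose $\X$ lifts to $\X'$ over $W_2(k)$. By Proposition \ref{prop:stack-def}, $\X'$ has a coarse space $X'$ which is a lift of $X$ to $W_2(k)$, and by Theorem \ref{thm:X==>YoverA} the minimal resolution $Y$ lifts to $W_2(k)$ as well. Now globalize Proposition \ref{prop:deformation canonical singularity}(1): for a wild $A_1$-singularity the local blow-down map $\beta$ on tangent spaces is \emph{non-zero}, and I expect that the class of the lift $Y'$ pairs non-trivially under $\beta$ with the deformation class of $X'$ — equivalently, the first-order deformation of $X$ induced by $\X'$ (which, via Lemma \ref{l:affine-stack-defs}, maps to zero in $\Ext^1(L_{X/k},\OO_X)$) cannot coincide with the one induced from $Y'$ unless that latter class is also zero, which it is not, because for wild $A_1$ the blow-down of a nontrivial first-order deformation of $Y$ is a nontrivial (non-locally-trivial) deformation of $X$, as witnessed concretely by the deformation $z^2 + tz + xy = 0$ of Remark \ref{beta non zero remark}. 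The technical work is to show that the \emph{particular} K3 deformation forced on $Y$ by the hypothetical lift $\X'$ is of this non-locally-trivial type at the singular points — one compares the image of the lift class of $\X$ in $\Ext^1(L_{X/k},\OO_X)$ (zero, by Lemma \ref{l:affine-stack-defs}) with the image of the lift class of $Y$ under $\beta$, and uses that for wild $A_1$ these two descriptions of "the deformation of $X$" are genuinely incompatible; equivalently, one shows the canonical lift of $\X$ would force the local model $W_2(k)[[x,y,z]]/(z^2 - xy)$ rather than the one actually arising, contradicting the global constraint that $X'$ must reduce to $X$ compatibly with $Y'$. I expect the delicate point to be bookkeeping the local-to-global passage and the sign/nonvanishing of $\beta$ precisely enough to derive the contradiction, rather than any new conceptual input beyond \S\ref{sec:duval}.
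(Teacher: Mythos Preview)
Your setup for (1) and (2) is essentially what the paper does: the purely inseparable sextic double plane $z^2=f$ is exactly the construction used, and it has $21$ singularities of type $A_1$. One minor correction: for (2) the paper does not invoke Deligne--Ogus (which in small characteristic requires care), but argues directly that $H^0(Y,\Theta_Y)=0$ by Rudakov--\v{S}afarevi\v{c}, whence $H^2(Y,\Theta_Y)=0$ by Serre duality and $\omega_Y\cong\OO_Y$, so $\Def_Y$ is unobstructed and $Y$ lifts to $W(k)$.

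Your argument for (3), however, has a genuine gap. You want a contradiction between ``the deformation of $X$ induced by $\X'$'' (locally trivial by Lemma \ref{l:affine-stack-defs}) and ``the deformation of $X$ obtained by blowing down $Y'$'' (which you claim must be locally non-trivial because $\beta\neq0$ for wild $A_1$). But $Y'$ is \emph{constructed from} $\X'$ via Theorem \ref{thm:X==>YoverA}: it is the blow-up of the coarse space $X'$ of $\X'$ along a lift of the singular locus. So blowing $Y'$ back down gives exactly $X'$ again, and the two first-order deformations of $X$ you are comparing are literally the same class. The non-vanishing of $\beta$ in Proposition \ref{prop:deformation canonical singularity}(\ref{def-can-sing:1}) only says that \emph{some} first-order deformations of $Y$ blow down to something locally non-trivial; it says nothing about the particular $Y'$ arising from $\X'$, which by construction sits over the locally trivial class. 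There is no local obstruction here --- each wild $A_1$ and its resolution lift compatibly over $W_2(k)$ --- so no purely local bookkeeping along the lines of \S\ref{sec:duval} can produce the contradiction.

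The paper's obstruction is global and of an entirely different nature. The key point extracted from the hypothetical lift $\X'$ is not a deformation class but the fact that all $21$ exceptional curves $E_i\subset Y$ extend to Cartier divisors $E_i'\subset Y'$ over $W_2(k)$ (because $Y'$ is a blow-up of $X'$ along a flat lift of the $21$ points). One then considers the map
\[
d\log:\,{\rm Pic}(Y')\otimes_\ZZ W_2(k)\,\longrightarrow\, H^1(Y',\Omega^1_{Y'}).
\]
Since the $E_i'$ are pairwise orthogonal with $(E_i')^2=-2$, and since Serre duality on $H^1(\Omega^1)$ is compatible with the intersection pairing via $d\log$, the images $d\log(E_i')$ remain independent in the free $W_2(k)$-module $H^1(Y',\Omega^1_{Y'})$. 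But the latter has rank $20$ (crystalline cohomology of a K3 is torsion-free and Hodge--de\,Rham degenerates), while the $E_i'$ span a rank-$21$ lattice. This rank inequality $21\leq 20$ is the contradiction. In short, the mechanism is a Picard-versus-Hodge rank count, not a local comparison of $\beta$.
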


\begin{proof}
Let $X\to\PP^2_k$ be the purely inseparable double cover defined by
$z^2-f(x_0,x_1,x_2)$, where $f$ is a generic homogeneous polynomial
$f$ of degree $6$.
Then $X$ is a surface with $21$ canonical singularities of type $A_1$,
see \cite[Theorem 3.4]{lie}.
Lifting the double cover over $W(k)$, we conclude that $X$ lifts projectively
to $W(k)$.
The minimal resolution $Y$ of $X$ is a K3 surface.
We have $H^0(Y,\Theta_Y)=0$ by \cite[Theorem 7]{rudakov},
which implies $H^2(Y,\Theta_Y)=0$ by Serre duality using $\omega_Y\iso\OO_Y$,
and so, deformations of $Y$ are unobstructed.
In particular, $Y$ lifts formally to $W(k)$, and thus, 
every smooth model of $X$ lifts formally 
to $W(k)$ by Proposition \ref{smooth surfaces}.

Suppose that $\X$ lifts to $W_2(k)$.  Let $X'$ and $Y'$ be the lifts of $X$ and $Y$ obtained as in the proof of Theorem \ref{thm:X==>YoverA}. With notation as in the proof, $Z$ is the disjoint union of the 21 singular points of $X$ and $Z'\subset X'$ is a closed subscheme which is flat over $W_2(k)$. Since $Y'$ is the blow-up of $X'$ along $Z'$, we see that all 21 exceptional divisors $E_i$ of $Y$ extend to relatively flat Cartier divisors $E'_i$ of $Y'$. 
Since intersection numbers are constant in flat families and for each $i$ we have $E_i^2=-2$, we see $(E'_i)^2=-2$ as well.
We consider the following commutative diagram, whose
downward arrows are restriction maps:
\[
\xymatrix{
  {\rm Pic}(Y')\otimes_\ZZ W_2(k)\ar[r]^-{d\log}\ar[d] & H^1(Y',\Omega^1_{Y'})\ar[d] \\
  {\rm Pic}(Y)\otimes_\ZZ k  \ar[r]^-{d\log} & H^1(Y,\Omega^1_{Y}) 
}
\]
We make the following observations:
\begin{enumerate}
 \item Being a K3 surface, $H^1(Y,\Omega_Y^1)$ is a $k$-vector space of dimension $20$.
  Since the Fr\"olicher spectral sequence from Hodge- to deRham cohomology for $Y$
  degenerates at $E_1$, $H^1(Y,\Omega_Y^1)$ is a subquotient of $\HdR{2}(Y/k)$.
  By semi-continuity, the Fr\"olicher spectral sequence of $Y'$ also degenerates at $E_1$,
  and thus, $H^1(Y',\Omega^1_{Y'})$ is a subquotient of $\HdR{2}(Y'/W_2(k))$.
  Next, the crystalline cohomology of $Y$ is torsion-free, and thus,
  $\HdR{2}(Y'/W_2(k))$ is a free $W_2(k)$-module, and the natural
  reduction map modulo $p$ to $\HdR{2}(Y/k)$ is surjective.
  Putting these observations together, we conclude that 
  $H^1(Y',\Omega^1_{Y'})$ is a free $W_2(k)$-module of rank $20$ and that
  the natural reduction map to $H^1(Y,\Omega_Y^1)$ is surjective.
 \item Serre duality induces a perfect pairing on $H^1(Y,\Omega_Y^1)$ (resp.~ $H^1(Y',\Omega^1_{Y'})$)
   of $k$-modules (resp.~ $W_2(k)$-modules),
 \item the assignment
   $$
    ({\mathcal{L}_1}, {\mathcal{L}_2}) \,\mapsto\, \chi( \mathcal{L}_1^\vee\otimes \mathcal{L}_2^\vee)
     - \chi(\mathcal{L}_1^\vee)-\chi(\mathcal{L}_2^\vee) + \chi(\OO)
   $$
   where $\chi(\mathcal{F}):=\sum_{i=0}^2 (-1)^i\, {\rm length}\, H^i(\mathcal{F})$,
   defines bilinear pairings on ${\rm Pic}(Y)$ and ${\rm Pic}(Y')$, respectively.
   Moreover, the restriction map ${\rm Pic}(Y')\to{\rm Pic}(Y)$ respects the bilinear pairings.
 \item for invertible sheaves $\mathcal{L}_i$, $i=1,2$ on $Y$ (resp.~ $Y'$), we have
   $$
     \left\langle d\log(\mathcal{L}_1),\, d\log(\mathcal{L}_2) \right\rangle_{{\rm Serre\, duality}} 
     \,=\,
     \left\langle \mathcal{L}_1,\,\mathcal{L}_2 \right\rangle_{{\rm Picard\, pairing}} \,\cdot\, 1,
   $$
   see, for example, \cite[Excercise 5.5]{badescu}.
\end{enumerate}

Now, the $E_i'$ are pairwise orthogonal with self-intersection $-2$.
Since $-2\neq0$ in $W_2(k)$, the classes $d\log(E'_i)$ are 
pairwise orthogonal with non-zero self-intersection
(with respect to the pairing coming from Serre duality).
Thus, the classes $d\log(E_1'),...,d\log(E'_{21})$ are linearly independent modulo $2$,
whereas $H^1(Y',\Omega_{Y'}^1)$ is a free $W_2(k)$-module of rank $20$.
This contradiction shows that $\mathcal X$ does not lift to $W_2(k)$.
\end{proof}

\begin{Remark}
 If $\mathcal L$ is a sufficiently ample invertible sheaf on $\PP^2_k$, where $k$ is algebraically closed of positive characteristic $p$, a generic $\alpha_{\mathcal L}$-torsor $X\to\PP^2_k$ will have $A_{p-1}$-singularities only, see \cite[Theorem 3.4]{lie}.  Moreover, $X$ is the canonical model of a surface of general type.
 By lifting the cover, $X$ lifts projectively to $W(k)$. Arguments similar to the ones in the proof of Theorem \ref{non liftable K3} 
 show that the stacky resolution $\X\to X$ does not lift to $W_2(k)$.
 This gives examples in arbitrarily large characteristic of surfaces with
 wild $A_n$-singularities that lift projectively to $W(k)$, but whose
 stacky resolutions do not lift to $W_2(k)$.
\end{Remark}

\end{document}